\let\l@section\l@chapter
\newtheorem{theorem}{Theorem}
\newtheorem*{remark}{Remark}
\newtheorem{corollary}[theorem]{Corollary}
\newtheorem{lemma}[theorem]{Lemma}
\newtheorem{Proposition}[theorem]{Proposition}
\newcommand{\bigO}{\mathcal{O}}
\DeclareMathOperator{\Tr}{Tr}
\DeclareMathOperator{\myspan}{span}
\DeclareMathOperator{\Gr}{Gr}
\DeclareMathOperator{\diag}{diag}
\DeclareMathOperator{\dist}{dist}
\DeclareMathOperator{\grad}{grad}
\DeclareMathOperator{\Hess}{Hess}
\DeclareMathOperator{\Exp}{Exp}
\DeclareMathOperator{\Log}{Log}
\DeclareMathOperator{\atan}{atan}
\DeclareMathOperator{\vecop}{vec}
\DeclareMathSymbol{\Gamma}{\mathord}{operators}{"00}
\DeclareMathSymbol{\Delta}{\mathord}{operators}{"01}
\DeclareMathSymbol{\Theta}{\mathord}{operators}{"02}
\DeclareMathSymbol{\Lambda}{\mathord}{operators}{"03}
\DeclareMathSymbol{\Xi}{\mathord}{operators}{"04}
\DeclareMathSymbol{\Pi}{\mathord}{operators}{"05}
\DeclareMathSymbol{\Sigma}{\mathord}{operators}{"06}
\DeclareMathSymbol{\Upsilon}{\mathord}{operators}{"07}
\DeclareMathSymbol{\Phi}{\mathord}{operators}{"08}
\DeclareMathSymbol{\Psi}{\mathord}{operators}{"09}
\DeclareMathSymbol{\Omega}{\mathord}{operators}{"0A}
\title{Geodesic Convexity of the Symmetric Eigenvalue Problem and Convergence of Riemannian Steepest Descent}
\date{}
\author{Foivos Alimisis  \hspace{4mm}  Bart Vandereycken }
\affil{Department of Mathematics, University of Geneva}
\begin{document}

\maketitle

\begin{abstract}    
We study the convergence of the Riemannian steepest descent algorithm on the Grassmann manifold for minimizing the block version of the Rayleigh quotient of a symmetric matrix. Even though this problem is non-convex in the Euclidean sense and only very locally convex in the Riemannian sense, we discover a structure for this problem that is similar to geodesic strong convexity, namely, weak-strong convexity. This allows us to apply similar arguments from convex optimization when studying the convergence of the steepest descent algorithm but with initialization conditions that do not depend on the eigengap $\delta$. When $\delta>0$, we prove exponential convergence rates, while otherwise the convergence is algebraic. Additionally, we prove that this problem is geodesically convex in a neighbourhood of the global minimizer of radius $\bigO(\sqrt{\delta})$.
\end{abstract}

\paragraph{Keywords:} Block Rayleigh quotient, Grassmann manifold, Geodesic convexity, Riemannian optimization, Low-rank approximation

\paragraph{Acknowledgements:} This work was supported by the SNSF under research project 192363.

\section{Introduction}

\hspace{4mm} We consider the problem of computing the top $k$ eigenvectors of a symmetric matrix $A \in \mathbb{R}^{n \times n}$, which has many applications in numerical linear algebra (low rank approximation), statistics (principal component analysis) and signal processing. Without loss of generality, we assume that $A$ is also positive semidefinite. This is because $A$ can be shifted as $A+c I_n$ for some constant $c$ and this transformation does not change its eigenvectors.

We denote by $\lambda_1 \geq \lambda_2 \geq \cdots \geq \lambda_n$ the eigenvalues of $A$ counted with multiplicity and by $\delta:=\lambda_k-\lambda_{k+1}$ the eigengap  for some $k$ between $1$ and $n-1$. We also denote $\Lambda_{\alpha} = \textnormal{diag}(\lambda_1, \ldots ,\lambda_k)$ and  $\Lambda_{\beta} = \textnormal{diag}(\lambda_{k+1}, \ldots ,\lambda_n)$.

A set of $k$ leading eigenvectors of $A$ can be found by minimizing the function
\[
 f(X) = -\Tr(X^TAX)
\]
over the set of $n \times k$ matrices with orthonormal columns. Indeed, from the Ky-Fan theorem we know that
\begin{equation}\label{eq:min_f_over_X}
  \min \{ f(X) \colon X \in \mathbb{R}^{n \times k}, X^T X = I_k \} = -(\lambda_1 + \cdots + \lambda_k)=-\Tr(\Lambda_{\alpha})=:f^*.
\end{equation}
Since $A$ is symmetric, we can define the matrix $V_{\alpha} = \begin{bmatrix} v_1 \hspace{2mm} \cdots \hspace{2mm} v_k \end{bmatrix}$ such that $V_{\alpha}^T V_{\alpha} = I_k$ and with $v_i \in \mathbb{R}^n$ a unit-norm eigenvector corresponding to $\lambda_i$. If the eigengap $\delta$ is strictly positive, then $\myspan(V_{\alpha})$ is unique; otherwise, we can choose any $v_k$ from a subspace with dimension equal to the multiplicity of $\lambda_k$. It is readily seen that $f(V_{\alpha})= -(\lambda_1 + \cdots + \lambda_k)$. In fact, all minimizers of~\eqref{eq:min_f_over_X} are of the form $V_{\alpha} Q$ with $Q$ a $k \times k$ orthogonal matrix. We also define $V_{\beta}=\begin{bmatrix} v_{k+1} \hspace{2mm} \cdots \hspace{2mm} v_n  \end{bmatrix}$ that contains the eigenvectors corresponding to the eigenvalues $\lambda_{k+1},\ldots,\lambda_n$. Its columns span the orthogonal complement of $\myspan(V_{\alpha})$ in $\mathbb{R}^n$ and thus $V_{\beta}^T V_{\beta} = I_{n-k}$ and $V_{\alpha}^T V_{\beta} = {0}_{k \times (n-k)}$.

Since $\myspan(V_{\alpha}) = \myspan(V_{\alpha}Q)$, it is more natural to consider this problem as a minimization problem on the Grassmann manifold $\Gr(n,k)$, the set of $k$-dimensional subspaces in $\mathbb{R}^n$. Let us therefore redefine the objective function as
\begin{equation}\label{eq:min_f_over_Gr}
 f(\mathcal{X}) = -\Tr(X^TAX) \text{ where $\mathcal{X} = \myspan(X)$ for $X \in \mathbb{R}^{n \times k}$ s.t. $X^T X = I_k$}.
\end{equation}
This cost function can be seen as a block version of the standard Rayleigh quotient.
An immediate benefit is that, if $\delta > 0$, the minimizer of~\eqref{eq:min_f_over_Gr} is isolated since it is the subspace $\mathcal{V_{\alpha}} = \myspan(V_\alpha)$.

To minimize $f$ on $\Gr(n,k)$, we shall use the Riemannian steepest descent method (RSD) along geodesics in $\Gr(n,k)$. Quite remarkably, for $\Gr(n,k)$ these geodesics can be implemented efficiently in closed form.

For analyzing the convergence properties of steepest descent on $\Gr(n,k)$, we extend results of the recent work \cite{alimisis2021distributed}, where it is shown that the Rayleigh quotient on the sphere enjoys favourable geodesic convexity-like properties, namely, \emph{weak-quasi-convexity} and \emph{quadratic growth}. In this work, we show that these convexity-like properties continue to hold in the more general case of the block Rayleigh quotient function $f\colon \Gr(n,k) \rightarrow \mathbb{R}$. These results are of general interest, but also sufficient to prove a local convergence rate for steepest descent for minimizing $f$ when started from an initial point outside the region of local convexity. For the latter, a crucial help is provided by the fact that the Grassmann manifold is positively curved.

In particular, assuming a \emph{strictly positive eigengap} $\delta$ between $\lambda_k$ and $\lambda_{k+1}$, we prove an exponential convergence rate to the subspace spanned by the $k$ leading eigenvectors, similar to the convergence of power method and subspace iteration (Theorem \ref{thm:exponential_conv}). If we do not assume any knowledge regarding the eigengap, then we can still prove a sub-exponential (polynomial) convergence rate of the function values to the global minimum (Theorem \ref{thm:convex_conv}), but we cannot directly study the convergence to a global minimizer. This is in line with  previous work but our analysis does not use standard notions of geodesic convexity and allows for an initial guess further from the global minimizer. In Appendix \ref{sec:big_step} we present related convergence results for steepest descent with a more tractable step size but at the expense of needing a slightly better initialization.

\section{Related work}
The symmetric eigenvalue problem has been popular for several decades in the numerical linear algebra and optimization communities. When only a few eigenvalues are targeted, the main solvers for this problem have been based on subspace iteration and Krylov subspace methods. Less but still considerable attention has been given to the steepest descent method and its accelerated versions. Most works on steepest descent focus only on computing the first leading eigenvector of a symmetric matrix ($k=1$), using a Euclidean version of the algorithm. Asymptotic convergence rates are known for this setting since the 1950's, see \cite{Hestenes}. More recently, exact non-asymptotic estimates for the same Euclidean steepest descent with exact line search were proved in \cite{KNYAZEV1991245}. For a more comprehensive overview of this line of research, the reader can refer to \cite{Neymer} and the references therein. A recent result that takes a different route compared to the previous ones is \cite{alimisis2021distributed}. There, a steepest descent algorithm on the sphere is analyzed using newly proved convexity-like properties of the spherical Rayleigh quotient.

Regarding the block version of the algorithm, where one targets multiple pairs of eigenvalues and eigenvectors, much less is known. We refer here to \cite{Neymer_block}, which presents a steepest descent-like method for the multiple eigenvector problem using Ritz projections onto a $2k$-dimensional subspace in each step. The convergence of this algorithm is proved to be linear, but computing the Ritz projections is quite expensive. Instead, in this work we consider a much cheaper version of steepest descent by directly choosing only one of the vectors in this $2k$-dimensional subspace to update our algorithm. Some analysis for such a steepest descent (without Ritz projection) on the Grassmann manifold using a retraction and an Armijo step-size is provided in \cite{absilOptimizationAlgorithmsMatrix2008} (see Algorithm 3 and Theorem 4.9.1). Unfortunately this convergence rate is asymptotic, that is, a linear rate is achieved after an unknown number of iterations, while the region of convergence cannot be quantified. Such a rate does not yield an iteration complexity.

The optimization landscape provided by the block Rayleigh quotient on the Grassmann manifold has also received some attention lately. \cite{Sato2014OptimizationAO}  provides many interesting properties of the critical points of this function and proves that all but the global optimum are strict saddles. This is later used to derive favourable convergence properties for a hybrid method consisting of Riemannian steepest descent in a first stage and a Riemannian Newton's method in a final stage. \cite{li2022landscape} proves the so-called robust strict saddle property for this function, that is, the Hessian evaluated in each critical point except the global optimum has both positive and negative eigenvalues in a whole neighborhood. However, none of these papers talks about (generalized) convexity of any form, nor discusses any convergence rates for steepest descent.

\hspace{4mm} Turning the discussion to the convexity properties of eigenvalue problems, there is a new line of research concerned by that. In \cite{zhang2016riemannian}, the authors prove (Theorem 4) that the Rayleigh quotient is geodesically gradient dominated in the sphere ($k=1$), that is, it satisfies a spherical version of the Polyak--Łojasiewicz inequality. In \cite{alimisis2021distributed}, it is shown that this result of \cite{zhang2016riemannian} can be strengthened to a geodesic weak-quasi-convexity and quadratic growth property, which imply gradient dominance when combined. Finally, the recent paper \cite{ahn2021riemannian} examines (among other contributions) the convexity structure of the same block version of the symmetric eigenvalue problem on the Grassmann manifold that we introduced above. Unfortunately, the characterization of the geodesic convexity region independently of the eigengap $\delta$ (Corollary 5 in \cite{ahn2021riemannian}) is wrong (see our Appendix~\ref{sec:convexity} for a counterexample). 
As we will prove in Theorem~\ref{eq:g-convex_domain}, the geodesic convexity region of $f$ (and the one of the equivalent cost function used in \cite{ahn2021riemannian}) needs to depend on the eigengap, as appears also in \cite[Lemma 7]{pmlr-v119-huang20e} in the case of the sphere ($k=1$).

To the best of our knowledge, the current work is the first that provides non-asymptotic convergence rates for the steepest descent algorithm for the multiple eigenvalue-eigenvector problem on the Grassmann manifold. We mainly rely on the work \cite{alimisis2021distributed}, which proves exponential convergence of steepest descent only in the case of $k=1$, that is, for the leading eigenvector. In this paper, we take a reasonable but highly non-trivial step forward by extending the convexity-like characterization of the spherical Rayleigh quotient to general $k$, that is, for a block of $k$ leading eigenvectors. Again, the paper \cite{bu2020note} is of high value for our current work regarding weakly-strongly-convex functions.

 As mentioned above, the standard algorithm for computing the leading eigenspace of dimension $k$ is subspace iteration (or power method when $k=1$).\footnote{Krylov methods are arguably the most popular algorithms but they do not iterate on a subspace directly and are typically started from a single vector. In particular, they cannot easily improve a given approximation of a subspace for large $k>1$.} However, there are reasons to believe that, in certain cases, Riemannian steepest descent (and its accelerated version with non-linear conjugate gradients) should be preferred, especially in noisy settings \cite{alimisis2021distributed} or in  electronic structure calculations where the leading eigenspace of many varying matrices $A$ needs to be computed.\footnote{Personal communication by Yousef Saad.} In particular, \cite{alimisis2021distributed} presents strong experimental evidence that steepest descent is more robust to perturbations of the matrix-vector products than subspace iteration close to the optimum. While subspace iteration still behaves better at the start of the iteration, it asymptotically fails to converge to an approximation of the leading subspace that is as good as the one estimated by Riemannian steepest descent. While \cite{alimisis2021distributed} dealt with a noisy situation due to calculations in a distributed setting with limited communication, exactly the same effect can be observed when we inject the matrix-vector products with Gaussian noise. Thus, we expect steepest descent to perform better than subspace iteration close to the optimum in any stochastic regime \cite{hardt2014noisy}.

Regarding worst-case theoretical guarantees, the strongest convergence result for subspace iteration in the presence of a strictly positive eigengap $\delta$ is in terms of the largest principal angle between the iterates and the optimum \cite{golub2013matrix}, that is, the $\ell_\infty$-norm of the vector of principal angles. In contrast, our convergence result for steepest descent for $\delta > 0$ (Theorem \ref{thm:exponential_conv}) is in terms of the $\ell_2$-norm of the same vector of angles, which is in general stronger. When $\delta = 0$, it is known from \cite{o1979estimating,Kuczynski92estimatingthe} that the largest eigenvalue ($k=1$) can still be efficiently estimated. We extend this result for $k>1$ and prove a convergence rate of steepest descent for the function values $f$ (Theorem \ref{thm:convex_conv}), relying only on weak-quasi-convexity (and thus using a different argument from \cite{o1979estimating,Kuczynski92estimatingthe}).

\section{Geometry of the Grassmann manifold and block Rayleigh quotient} \label{sec:background}

We present here a brief introduction into the geometry of the Grassmann manifold. The content is not new and for more details, we refer to \cite{absilOptimizationAlgorithmsMatrix2008,bendokatGrassmannManifoldHandbook2020,edelmanGeometryAlgorithmsOrthogonality1999}.

The $(n,k)$-Grassmann manifold is defined as the set of all $k$-dimensional subspaces of $\mathbb{R}^n$:
\begin{equation*}
    \Gr(n,k)=\lbrace \mathcal{X} \subseteq \mathbb{R}^n \colon \mathcal{X} \hspace{1mm}  \text{is a subspace and} \dim(\mathcal{X})=k \rbrace.
\end{equation*}

Any element $\mathcal{X}$ of $\Gr(n,k)$ can be represented by a matrix $X \in \mathbb{R}^{n \times k}$ that satisfies $\mathcal{X} = \myspan(X)$. Such a representative is not unique since $Y=XQ$ for some invertible matrix $Q \in  \mathbb{R}^{k \times k}$ satisfies $\myspan(Y) = \myspan(X)$. Without loss of generality, we will therefore always take matrix representatives $X$ of subspaces $\mathcal{X}$ that have orthonormal columns throughout the paper. 
With some care, the non-uniqueness of the representatives is not a problem.\footnote{This can be made very precise by describing $\Gr(n,k)$ as the quotient of the Stiefel manifold with the orthogonal group. The elegant theory of this quotient manifold is worked out in \cite{absilOptimizationAlgorithmsMatrix2008}.} For example, our objective function \eqref{eq:min_f_over_Gr} is invariant to $Q$.


\paragraph{Riemannian structure.} The set $\Gr(n,k)$ admits the structure of a differential manifold with tangent spaces
\begin{equation}\label{eq:def_TXGr}
    T_{\mathcal{X}} \Gr(n,k)=\lbrace G \in \mathbb{R}^{n \times k} \colon X^T G=0 \rbrace, 
\end{equation}
where $\mathcal{X} = \myspan(X)$. Since $X^T G = 0$ if and only if $(XQ)^T G=0$, for any invertible matrix $Q \in \mathbb{R}^{k \times k}$, this description of the tangent space does not depend on the representative $X$. However, a specific tangent vector $G$ will depend on the chosen $X$. With slight abuse of notation,\footnote{Using the quotient manifold theory, one would use horizontal lifts.} the above definition should therefore be interpreted as: given a fixed $X$, we define tangent vectors $G_1, G_2, \ldots $ of  $\Gr(n,k)$ at $\mathcal{X}=\myspan(X)$. 

This subtlety is important, for example, when defining an inner product on $T_{\mathcal{X}} \Gr(n,k)$:
\[
 \langle G_1, G_2 \rangle_{\mathcal{X}} = \Tr(G^T_1 G_2) \ \text{\ with\  $G_1,G_2 \in T_{\mathcal{X}} \Gr(n,k)$ }.
\] 
Here, $G_1$ and $G_2$ are tangent vectors of the same representative $X$. Observe that the inner product is invariant to the choice of orthonormal representative: If $\Bar{G}_1=G_1 Q$ and $\Bar{G}_2 = G_2 Q$ with orthogonal $Q$, then we have
\begin{equation*}
    \langle \bar G_1, \bar G_2 \rangle_{\mathcal{X}} = \Tr(\bar G^T_1  \bar G_2) = \Tr(Q^T G_1^T G_2 Q)= \Tr(G_1^T G_2 Q Q^T) = \Tr(G_1^T G_2).
\end{equation*}
It is easy to see that the norm induced by this inner product in any tangent space is the Frobenius norm, which we will denote throughout the paper as $\| \cdot \|:=\| \cdot \|_F$.

\paragraph{Exponential map.} Given the Riemannian structure of $\Gr(n,k)$, we can compute the exponential map at a point $\mathcal{X}$ as \cite[Thm.~3.6]{absilRiemannianGeometryGrassmann2004}
\begin{equation}\label{eq:formula_geo}
\begin{aligned}
 \Exp_{\mathcal{X}}: T_{\mathcal{X}} \Gr(n,k) &\rightarrow \Gr(n,k) \\ 
  G &\mapsto \myspan(\, X V \cos(\Sigma) + U \sin(\Sigma) \, ), 
\end{aligned}
\end{equation}
where $ U \Sigma V^T$ is the \emph{compact} SVD of $G$ such that $\Sigma$ and $V$ are square matrices. 

The exponential map is invertible in the domain \cite[Prop.~5.1]{bendokatGrassmannManifoldHandbook2020}
\begin{equation}\label{eq:inj_exp}
\left \lbrace G \in T_{\mathcal{X}} \Gr(n,k)  \colon  \| G \|_2 < \frac{\pi}{2} \right \rbrace,
\end{equation}
where $\| G \|_2$ is the spectral norm of $G$. The inverse of the exponential map restricted to this domain is the logarithmic map, denoted by $\Log$. Given two subspaces $\mathcal{X},\mathcal{Y}\in \Gr(n,k)$, we have
\begin{equation}\label{eq:log formula}
 \Log_{\mathcal{X}}(\mathcal{Y}) = U \atan(\widehat\Sigma) \, V^T,
\end{equation}
where  $U \widehat \Sigma V^T = (I - X X^T) Y (X^T Y)^{-1}$ is again a compact SVD. This is well-defined if $X^T Y$ is invertible, which is guaranteed if all principal angles between $\mathcal{X}$ and $\mathcal{Y}$ are strictly less than $\pi / 2$ (see below). By taking $G = \Log_{\mathcal{X}}(\mathcal{Y})$, we see that $\Sigma = \atan(\widehat\Sigma)$.

\paragraph{Principal angles.} The Riemannian structure of the Grassmann manifold can be conveniently described by the notion of the principal angles between subspaces. Given two subspaces $\mathcal{X},\mathcal{Y} \in \Gr(n,k)$, the principal angles between them are $0 \leq \theta_1 \leq \cdots \leq \theta_k \leq \pi/2$ obtained from the SVD 
\begin{equation}\label{eq:SVD_for_principal_angles}
    Y^T X=U_1 \cos \theta \ V_1^T
\end{equation}
where $U_1 \in \mathbb{R}^{k \times k}, V_1 \in \mathbb{R}^{k \times k}$ are orthogonal and the diagonal matrix $\cos \theta= \diag(\cos \theta_1,...,\cos \theta_k)$.

We can express the Riemannian logarithm using principal angles and the intrinsic distance induced by the Riemannian inner product discussed above is
\begin{equation}\label{eq:distance_with_Log_and_angles}
    \dist(\mathcal{X},\mathcal{Y)}=\| \Log_{\mathcal{X}} (\mathcal{Y}) \| = \| \Log_{\mathcal{Y}} (\mathcal{X}) \|=\sqrt{\theta_1^2+...+\theta_k^2}=\| \theta \|_2,
\end{equation}
where $\theta=(\theta_1, \ldots ,\theta_k)^T$.

If $X \in \mathbb{R}^{n \times k}$ is an arbitrary matrix with orthonormal columns, then, generically, these columns will not be exactly orthogonal to the $k$ leading eigenvectors $v_1, \ldots, v_k$ of $A$. Thus, we have with probability one that the principal angles between $\mathcal{X}$ and the space of $k$ leading eigenvectors satisfy $0 \leq \theta_1 \leq \cdots \leq \theta_k < \pi/2$.

\paragraph{Curvature.}
We can compute exactly the sectional curvatures in $\Gr(n,k)$, but for our purposes we only need that they are everywhere non-negative \cite{Wong,bendokatGrassmannManifoldHandbook2020}. This means that the geodesics on the Grassmann manifold spread more slowly than in Euclidean space. This is consequence of the famous Toponogov's theorem that we state here in the form of the following technical lemma, which will be important in our convergence analysis.
\begin{lemma}
\label{prop:tangent_space}
Let $\mathcal{X}, \mathcal{Y}, \mathcal{Z} \in \Gr(n,k)$, such that 
$$\max\{ \textnormal{dist}(\mathcal{X}, \mathcal{Z})  , \textnormal{dist} (\mathcal{Y}, \mathcal{Z}) \} < \frac{\pi}{2}.
$$ Then
\begin{equation*}
    \textnormal{dist}(\mathcal{X} , \mathcal{Y}) \leq \| \log_\mathcal{Z}(\mathcal{X})-\log_\mathcal{Z}(\mathcal{Y}) \|.
\end{equation*}

\end{lemma}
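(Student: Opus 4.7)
The plan is to derive the lemma directly from Toponogov's hinge inequality for non-negatively curved spaces, applied to the geodesic hinge at $\mathcal{Z}$ with endpoints $\mathcal{X}$ and $\mathcal{Y}$. Because all sectional curvatures of $\Gr(n,k)$ are non-negative, the comparison model space is Euclidean and the comparison inequality goes in the stated direction: geodesic triangles on $\Gr(n,k)$ are ``thinner'' than their Euclidean counterparts.

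The first step is a well-posedness check. By \eqref{eq:distance_with_Log_and_angles}, the hypothesis $\dist(\mathcal{X},\mathcal{Z}) < \pi/2$ controls the Frobenius norm of $\Log_{\mathcal{Z}}(\mathcal{X})$, hence also its spectral norm, so $\Log_{\mathcal{Z}}(\mathcal{X})$ lies in the injectivity domain \eqref{eq:inj_exp}; likewise for $\Log_{\mathcal{Z}}(\mathcal{Y})$. In particular, there are unique minimizing geodesics $\gamma_{\mathcal{X}},\gamma_{\mathcal{Y}}$ from $\mathcal{Z}$ to $\mathcal{X}$ and $\mathcal{Y}$, with initial velocities $G_{\mathcal{X}}=\Log_{\mathcal{Z}}(\mathcal{X})$ and $G_{\mathcal{Y}}=\Log_{\mathcal{Z}}(\mathcal{Y})$, and the hinge angle $\angle (G_{\mathcal{X}},G_{\mathcal{Y}})$ at $\mathcal{Z}$ is well-defined. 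Next, I would form the Euclidean comparison hinge in the tangent space $T_{\mathcal{Z}}\Gr(n,k)$ with the Frobenius inner product: put one vertex at the origin and take the two sides to be the vectors $G_{\mathcal{X}}$ and $G_{\mathcal{Y}}$. By construction, the two sides emanating from $\mathcal{Z}$ have the same lengths ($\dist(\mathcal{X},\mathcal{Z})$ and $\dist(\mathcal{Y},\mathcal{Z})$) and subtend the same angle as the hinge on the manifold.

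Toponogov's hinge theorem (for $K\ge 0$, comparing against Euclidean space) then asserts that the length of the opposite side of the geodesic triangle on $\Gr(n,k)$ is bounded above by the length of the opposite side of the comparison triangle. The former is $\dist(\mathcal{X},\mathcal{Y})$, while the latter is simply $\|G_{\mathcal{X}}-G_{\mathcal{Y}}\|=\|\Log_{\mathcal{Z}}(\mathcal{X})-\Log_{\mathcal{Z}}(\mathcal{Y})\|$, yielding the claim. The only nontrivial obstacle in this plan is making sure Toponogov applies without caveats, which is exactly what the hypothesis $\max\{\dist(\mathcal{X},\mathcal{Z}),\dist(\mathcal{Y},\mathcal{Z})\}<\pi/2$ achieves: the two legs of the hinge are minimizing and lie within the injectivity radius, so the comparison argument requires no additional assumptions on the third side and no appeal to the cut locus of $\Gr(n,k)$.
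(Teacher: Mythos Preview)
Your proposal is correct and matches the paper's approach: the paper does not give a separate proof of this lemma but simply presents it as the form of Toponogov's theorem it needs, noting that non-negative sectional curvature of $\Gr(n,k)$ makes geodesics spread more slowly than in Euclidean space. Your write-up just fills in the standard hinge-comparison details (well-posedness of $\Log$ via the $\pi/2$ bound, then the Euclidean comparison triangle in $T_{\mathcal{Z}}\Gr(n,k)$) that the paper leaves implicit.
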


\begin{lemma}(Law of cosines) \label{lem:geo_triangle_nonneg}
Let $\mathcal{X},\mathcal{Y},\mathcal{Z}$ as in Lemma \ref{prop:tangent_space}. Then
\begin{equation*}
    \dist^2(\mathcal{X}, \mathcal{Y}) \leq \dist^2(\mathcal{Z}, \mathcal{X})+ \dist^2(\mathcal{Z}, \mathcal{Y})-2 \langle \Log_{\mathcal{Z}} (\mathcal{X}), \Log_{\mathcal{Z}} (\mathcal{Y}) \rangle.
\end{equation*}
\end{lemma}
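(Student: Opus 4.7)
The plan is to derive this law of cosines as a direct corollary of Lemma~\ref{prop:tangent_space} combined with the standard Euclidean law of cosines applied inside the tangent space $T_{\mathcal{Z}}\Gr(n,k)$, which is a genuine inner product space under $\langle \cdot, \cdot \rangle_{\mathcal{Z}}$.

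Concretely, I would first set $u = \Log_{\mathcal{Z}}(\mathcal{X})$ and $v = \Log_{\mathcal{Z}}(\mathcal{Y})$; the hypothesis $\max\{\dist(\mathcal{X},\mathcal{Z}),\dist(\mathcal{Y},\mathcal{Z})\} < \pi/2$ ensures, via \eqref{eq:inj_exp} and \eqref{eq:distance_with_Log_and_angles}, that both logarithms are well-defined. In the (Euclidean) tangent space at $\mathcal{Z}$, the ordinary parallelogram/law-of-cosines identity gives
\begin{equation*}
\| u - v \|^2 = \| u \|^2 + \| v \|^2 - 2 \langle u, v \rangle.
\end{equation*}
By the distance formula \eqref{eq:distance_with_Log_and_angles}, $\| u \| = \dist(\mathcal{Z},\mathcal{X})$ and $\| v \| = \dist(\mathcal{Z},\mathcal{Y})$.

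The second and final step is to invoke Lemma~\ref{prop:tangent_space}, whose hypotheses match ours exactly, to conclude
\begin{equation*}
\dist(\mathcal{X},\mathcal{Y}) \le \| \Log_{\mathcal{Z}}(\mathcal{X}) - \Log_{\mathcal{Z}}(\mathcal{Y}) \| = \| u - v \|.
\end{equation*}
Squaring this inequality and substituting the Euclidean identity above yields the claim. There is no real obstacle: all the nonnegative-curvature content is already packaged in Lemma~\ref{prop:tangent_space}, and the present lemma is just its restatement after expanding the squared norm of the difference via bilinearity of the Riemannian inner product on $T_{\mathcal{Z}}\Gr(n,k)$.
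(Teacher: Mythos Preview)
Your proposal is correct and matches the paper's proof exactly: the paper simply says to apply Lemma~\ref{prop:tangent_space} and expand $\|\Log_{\mathcal{Z}}(\mathcal{X})-\Log_{\mathcal{Z}}(\mathcal{Y})\|^2$, which is precisely what you do.
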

\begin{proof}
Apply Lemma \ref{prop:tangent_space} and expand $\| \log_\mathcal{Z}(\mathcal{X})-\log_\mathcal{Z}(\mathcal{Y}) \|^2$.
\qed
\end{proof}

\paragraph{Block Rayleigh quotient.}
Our objective function for minimization is the block version of the Rayleigh quotient:
\[ 
f(\mathcal{X}) = -\Tr(X^TAX) \text{ where $\mathcal{X} = \myspan(X) \in \Gr(n,k)$ s.t. $X^T X = I_k$}.
\]  
This function has $\mathcal{V}_{\alpha}=\myspan(\begin{bmatrix} v_1 \hspace{2mm} \cdots \hspace{2mm} v_k \end{bmatrix})$ as global minimizer. This minimizer is unique on $\Gr(n,k)$ if and only if $\delta>0$.

Given any differentiable function $f\colon\Gr(n,k) \rightarrow \mathbb{R}$, we can define its Riemannian gradient as the vector field that satisfies
\begin{equation*}
    df(\mathcal{X})(\mathcal{G}) = \langle \grad f(\mathcal{X}),\mathcal{G} \rangle_{\mathcal{X}}.
\end{equation*}
For a given representative $X$ of $\mathcal{X}$, the Riemannian gradient of the block Rayleigh quotient satisfies 
\begin{equation*}\label{eq:grad f formula}
 \grad f(\mathcal{X}) = -2(I-X X^T) A X.
\end{equation*}
Using the notions of the Riemannian gradient and Levi-Civita connection, we can define also a Riemannian notion of Hessian. 
For the block Rayleigh quotient $f$, the Riemannian Hessian $\Hess f$ evaluated as bilinear form satisfies
\begin{equation}\label{eq:Hessian_f_inner_product}
  \Hess f(\mathcal{X})[{G},{G}] = 2 \langle G, G X^T A X - A G \rangle, 
\end{equation}
for  $G \in T_{\mathcal{X}} \Gr(n,k)$; see \cite[\S4.4]{edelmanGeometryAlgorithmsOrthogonality1999} or \cite[\S6.4.2]{absilOptimizationAlgorithmsMatrix2008}.

\section{Convexity-like properties of the block Rayleigh quotient} \label{sec:cost_conv}

We now prove the new analytic properties of the block Rayleigh quotient $f(\mathcal{X})=-\Tr(X^T A X)$. These are important in their own right but will also be used later for the convergence of the Riemannian steepest descent method.

\subsection{Smoothness}

A $C^2$ function defined on the Grassmann manifold is called $\gamma$-smooth if the maximum eigenvalue of its Riemannian Hessian is everywhere upper bounded by a positive constant $\gamma$. This is true for the block Rayleigh quotient, as we show in the next proposition:

\begin{tcolorbox} 
\begin{Proposition}[Smoothness]\label{prop:smoothness}
The eigenvalues of the Riemannian Hessian of $f$ on $\Gr(n,k)$ are upper bounded by  $\gamma := 2 (\lambda_1 - \lambda_n)$. 
\end{Proposition}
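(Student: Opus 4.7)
The plan is to work directly from the Hessian formula \eqref{eq:Hessian_f_inner_product} and upper bound the bilinear form $\Hess f(\mathcal{X})[G,G]$ by $\gamma\|G\|^2$ uniformly in $\mathcal{X}\in\Gr(n,k)$ and $G\in T_{\mathcal{X}}\Gr(n,k)$; since $\|G\|$ is the Frobenius norm induced by the Riemannian inner product, this is equivalent to bounding the eigenvalues of the Hessian operator by $\gamma$.

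First, I would expand
\begin{equation*}
\Hess f(\mathcal{X})[G,G] = 2\langle G, G X^T A X - A G\rangle = 2\Tr(G^T G\, X^T A X) - 2\Tr(G^T A G),
\end{equation*}
so that the two terms can be analyzed separately. The matrix $M := X^T A X$ is symmetric $k\times k$, and since $X$ has orthonormal columns, the Courant--Fischer (or Cauchy interlacing) theorem gives $\lambda_n\leq \lambda_{\min}(M)$ and $\lambda_{\max}(M)\leq \lambda_1$. Combined with the fact that $G^T G$ is symmetric and positive semidefinite, the standard trace inequality $\Tr(PM)\leq \lambda_{\max}(M)\Tr(P)$ for PSD $P$ yields
\begin{equation*}
\Tr(G^T G\, X^T A X) \leq \lambda_1 \Tr(G^T G) = \lambda_1 \|G\|^2.
\end{equation*}

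For the second term, I would use that $A$ is symmetric with smallest eigenvalue $\lambda_n$, so for every column $g_i$ of $G$ we have $g_i^T A g_i \geq \lambda_n \|g_i\|^2$; summing over $i$ gives $\Tr(G^T A G) \geq \lambda_n \|G\|^2$. Combining the two bounds produces
\begin{equation*}
\Hess f(\mathcal{X})[G,G] \leq 2\lambda_1 \|G\|^2 - 2\lambda_n \|G\|^2 = 2(\lambda_1-\lambda_n)\|G\|^2 = \gamma \|G\|^2,
\end{equation*}
which is exactly the claimed smoothness constant.

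I do not anticipate a genuine obstacle here: both trace inequalities are elementary, the tangency constraint $X^T G=0$ is not even needed (the estimates hold for arbitrary $G\in\mathbb{R}^{n\times k}$), and invariance under the choice of orthonormal representative $X$ is automatic since replacing $X$ by $XQ$ with orthogonal $Q$ transforms $M$ to $Q^T M Q$, which has the same spectrum. The only minor care is to note that the Riemannian Hessian, viewed as a self-adjoint operator via the Frobenius inner product on $T_{\mathcal{X}}\Gr(n,k)$, has all its eigenvalues bounded by the supremum of its Rayleigh quotient, so bounding the bilinear form suffices.
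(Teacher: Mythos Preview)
Your proposal is correct and follows essentially the same route as the paper: expand the Hessian bilinear form, bound $\Tr(G^TG\,X^TAX)$ via the trace inequality together with $\lambda_{\max}(X^TAX)\le\lambda_1$, and bound $\Tr(G^TAG)\ge\lambda_n\|G\|^2$. The only cosmetic difference is that the paper bounds the second term as $\Tr(A\,GG^T)\ge\lambda_{\min}(A)\Tr(GG^T)$ via the same trace inequality rather than summing over columns, which is equivalent.
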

\end{tcolorbox}

\begin{proof}
Let $G$ be a tangent vector of $\Gr(n,k)$ at $X$. Then the Riemannian Hessian satisfies (see~\eqref{eq:Hessian_f_inner_product})
\[
  \tfrac{1}{2} \Hess f(\mathcal{X})[G,G] =  \Tr(G^T G X^T A X) - \Tr(A G G^T).
\]
Since $A, X^T A X, G G^T$, and $G^T G$ are all symmetric and positive semi-definite matrices, standard trace inequality (see, e.g, \cite[Thm.~4.3.53]{hornMatrixAnalysis2012a}) gives 
\[
 \Hess f(\mathcal{X})[G,G]  \leq 2 (\lambda_{\max}(X^T A X)- \lambda_{\min} (A)) \| G \|^2.
\]
Since $X$ has orthonormal columns, $\lambda_{\max}(X^T A X) \leq \lambda_{\max}( A )$; see, e.g., \cite[Cor.~4.3.37]{hornMatrixAnalysis2012a}. The proof is now complete with the definition of $\lambda_1$ and $\lambda_n$.
\qed
\end{proof}

The result in Prop.~\ref{prop:smoothness} is tight: Choosing $X = V_\alpha$ and $G = v_n e_1^T$, it is readily verified that the upper bound is attained. From now on, we refer to $\gamma$ as the specific value $2(\lambda_1-\lambda_n)$. This value also features in a useful upper bound for the spectral norm of the gradient. This bound is independent of $\mathcal{X}$:
\begin{tcolorbox}
\begin{lemma}\label{lem:uniform upper bound grad}
For all $\mathcal{X} \in \Gr(n,k)$, the Riemanian gradient of $f$ satisfies
\[
 \| \grad f(\mathcal{X}) \|_2  \leq \frac{\gamma}{2}.
\]
\end{lemma}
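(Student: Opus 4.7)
The plan is to exploit the formula $\grad f(\mathcal{X}) = -2(I - XX^T)AX$, where $I - XX^T$ is the orthogonal projector onto the orthogonal complement of $\mathcal{X} = \myspan(X)$. Since an orthogonal projector has spectral norm at most one and since $X$ has orthonormal columns (so $\|X\|_2 = 1$), a naive bound would give $\|\grad f(\mathcal{X})\|_2 \leq 2\|A\|_2 = 2\lambda_1$, which is not tight enough. To obtain the sharper bound $\lambda_1 - \lambda_n$, I would use a spectrum-centering trick.

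First I observe that $(I - XX^T)X = 0$, hence for any scalar $c \in \mathbb{R}$,
\begin{equation*}
(I - XX^T) A X = (I - XX^T)(A - cI) X.
\end{equation*}
Then I choose $c = (\lambda_1 + \lambda_n)/2$, which shifts the spectrum of the symmetric matrix $A$ to lie symmetrically in the interval $[-(\lambda_1 - \lambda_n)/2,\, (\lambda_1 - \lambda_n)/2]$, so that
\begin{equation*}
\|A - cI\|_2 = \tfrac{1}{2}(\lambda_1 - \lambda_n).
\end{equation*}

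The remaining step is the submultiplicativity of the spectral norm together with the fact that both $I - XX^T$ and $X$ have spectral norm at most one:
\begin{equation*}
\|(I - XX^T)(A - cI)X\|_2 \leq \|I - XX^T\|_2 \, \|A - cI\|_2 \, \|X\|_2 \leq \tfrac{1}{2}(\lambda_1 - \lambda_n).
\end{equation*}
Multiplying by the factor of $2$ from the gradient formula yields $\|\grad f(\mathcal{X})\|_2 \leq \lambda_1 - \lambda_n = \gamma/2$, as claimed.

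I do not expect a real obstacle here; the only non-obvious move is the shift $A \mapsto A - cI$, which is legal precisely because the projector annihilates $X$, and which is optimal for the spectral norm because centering around the midpoint of the extreme eigenvalues minimizes $\|A - cI\|_2$ over $c$. This also matches the tightness of Proposition \ref{prop:smoothness} and suggests the bound is sharp.
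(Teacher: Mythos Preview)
Your proof is correct. The key identity $(I-XX^T)X=0$ lets you insert the shift $A\mapsto A-cI$ for free, and choosing $c=(\lambda_1+\lambda_n)/2$ makes $\|A-cI\|_2=(\lambda_1-\lambda_n)/2$; submultiplicativity then finishes the job.

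The paper takes a different route: it completes $X$ to an orthogonal matrix $\begin{bmatrix}X & X_\perp\end{bmatrix}$, rewrites $\|\grad f(\mathcal{X})\|_2 = 2\|X_\perp^T A X\|_2$, and then invokes a singular-value inequality for off-diagonal blocks of a symmetric matrix (Theorem~2 in \cite{liInequalitiesSingularValues1999a}). Your argument is more elementary and fully self-contained, avoiding any external citation; the paper's approach, on the other hand, identifies $\grad f(\mathcal{X})$ with an off-diagonal block, which is conceptually nice and connects the bound to a known structural fact about symmetric matrices. In substance the two arguments are closely related---the cited inequality can itself be proved via exactly the shift trick you use---so neither is deeper than the other, but yours is the cleaner write-up here.
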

\end{tcolorbox}
\begin{proof}
Since $X$ has orthonormal columns, we can complete it to the orthogonal matrix $Q = \begin{bmatrix} X & X_\perp \end{bmatrix}$. Hence, $\| \grad f(\mathcal{X}) \|_2 = \| 2 (I - XX^T)AX \|_2 = 2 \| X_\perp^T A X \|_2$. The result now follows directly from~\cite[Thm.~2]{liInequalitiesSingularValues1999a} since $A$ is real symmetric and the definition of $\gamma = 2(\lambda_1 - \lambda_n)$.
\qed
\end{proof}

By the second-order Taylor expansion of $f$, it is easy to see that Proposition~\ref{prop:smoothness} implies (see, e.g., \cite[Thm.~7.1.2]{absilOptimizationAlgorithmsMatrix2008})
\begin{equation}\label{eq:quadratic_upper_bound}
    f(\mathcal{X}) \leq f(\mathcal{Y})+ \langle \grad f(\mathcal{Y}), \Log_{\mathcal{Y}} (\mathcal{X}) \rangle+\frac{\gamma}{2} \dist^2(\mathcal{X},\mathcal{Y}), 
\end{equation}
for any $\mathcal{X}, \mathcal{Y} \in \Gr(n,k)$ such that $\Log_{\mathcal{X}}(\mathcal{Y})$ is well-defined.

As in the introduction, denote the global minimum of $f$ by $f^*$ which is attained at $\mathcal{V}_\alpha \in \Gr(n,k)$. Inequality (\ref{eq:quadratic_upper_bound}) leads to the following lemma:

\begin{tcolorbox}

\begin{lemma}

For any $\mathcal{X} \in \Gr(n,k)$, we have
    \begin{equation*}\label{eq:optim_gap_with_gradient}
    f(\mathcal{X})-f^* \geq \frac{1}{2 \gamma} \| \grad f(\mathcal{X}) \|^2.
\end{equation*}

\end{lemma}
\end{tcolorbox}

\begin{proof}
Since $f^*$ is a global minimum of $f$, we have from~\eqref{eq:quadratic_upper_bound} that
\begin{equation*}
    f^* \leq f(\mathcal{X}) \leq f(\mathcal{Y})+\langle \textnormal{grad}f(\mathcal{Y}),\Log_{\mathcal{Y}}(\mathcal{X}) \rangle+\frac{\gamma}{2} \| \Log_{\mathcal{Y}}(\mathcal{X}) \|^2,
\end{equation*}
for any $\mathcal{X}, \mathcal{Y} \in \Gr(n,k)$ such that $\Log_{\mathcal{X}}(\mathcal{Y})$ is well-defined.

We set $\mathcal{X}:=\Exp_{\mathcal{Y}} \left(-\frac{1}{\gamma} \textnormal{grad}f(\mathcal{Y})\right)$. 
By Lemma \ref{lem:uniform upper bound grad}, we have that $\left\|-\frac{1}{\gamma} \textnormal{grad}f(\mathcal{Y}) \right \|_2<\frac{\pi}{2}$ and by equation \ref{eq:inj_exp} we have that $\Log_{\mathcal{Y}}(\mathcal{X})$ is well-defined and equal to $-\frac{1}{\gamma} \textnormal{grad}f(\mathcal{Y})$. Then,    
the right hand side of the initial inequality becomes 
\begin{align*}
    f^* \leq f(\mathcal{Y})-\frac{1}{\gamma} \| \textnormal{grad}f(\mathcal{Y})\|^2+ \frac{1}{2\gamma} \| \textnormal{grad}f(\mathcal{Y})\|^2 = f(\mathcal{Y})-\frac{1}{2 \gamma} \| \textnormal{grad}f(\mathcal{Y})\|^2,
\end{align*}
for any $\mathcal{Y} \in \Gr(n,k)$ with $\textnormal{dist}(\mathcal{Y},\mathcal{V}_{\alpha}) < \frac{\pi}{2}$. Rearranging the last inequality and substituting $\mathcal{Y}=\mathcal{X}$, we get the desired result.
\qed
\end{proof}

\subsection{Weak-quasi-convexity and quadratic growth}

We now turn our interest in the convexity properties of the block Rayleight quotient function. We start by proving a property which is known in the literature as \emph{quadratic growth}.

\begin{tcolorbox}
\begin{Proposition}[Quadratic growth]\label{prop:quadratic growth}
\label{prop:quadratic_growth}
Let $0 \leq \theta_1 \leq \cdots \leq \theta_k < \pi/2$ be the principal angles between the subspaces $\mathcal{X}$ and $\mathcal{V}_\alpha$. The function $f$ satisfies
$$f(\mathcal{X})-f^* \geq c_Q \, \delta \, \dist^2(\mathcal{X},\mathcal{V_{\alpha}})$$
where $c_Q = 4/\pi^2 > 0.4$.
\end{Proposition}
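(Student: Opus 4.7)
The plan is to expand $\Tr(X^T A X)$ in the eigenbasis of $A$ and identify the singular values of the projection block $V_\alpha^T X$ with the cosines of the principal angles, reducing the inequality to a diagonal (one dimension per angle) comparison. Concretely, I would decompose $X = V_\alpha C_\alpha + V_\beta C_\beta$ with $C_\alpha = V_\alpha^T X \in \mathbb{R}^{k\times k}$ and $C_\beta = V_\beta^T X \in \mathbb{R}^{(n-k)\times k}$, so that
\[
\Tr(X^T A X) = \Tr(C_\alpha^T \Lambda_\alpha C_\alpha) + \Tr(C_\beta^T \Lambda_\beta C_\beta),
\]
together with the orthonormality relation $C_\alpha^T C_\alpha + C_\beta^T C_\beta = I_k$. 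From the SVD in \eqref{eq:SVD_for_principal_angles}, the singular values of $C_\alpha$ are $\cos\theta_1, \ldots, \cos\theta_k$, so $C_\alpha C_\alpha^T$ has eigenvalues $\cos^2\theta_i$ and $\Tr(C_\beta^T C_\beta) = k - \sum_i \cos^2\theta_i = \sum_i \sin^2\theta_i$.

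Next, I would upper bound each trace separately. For the ``good'' block, von Neumann's trace inequality applied to the PSD matrices $\Lambda_\alpha$ and $C_\alpha C_\alpha^T$ (both with eigenvalues already sorted in decreasing order, using that $\theta_1 \leq \cdots \leq \theta_k$) yields
\[
\Tr(\Lambda_\alpha C_\alpha C_\alpha^T) \leq \sum_{i=1}^k \lambda_i \cos^2 \theta_i.
\]
For the ``bad'' block, since $\Lambda_\beta \preceq \lambda_{k+1} I$, a direct estimate gives $\Tr(C_\beta^T \Lambda_\beta C_\beta) \leq \lambda_{k+1} \sum_i \sin^2\theta_i$. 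Combining these with $\Tr(\Lambda_\alpha) = \sum_i \lambda_i$ and $\cos^2\theta_i + \sin^2\theta_i = 1$,
\[
f(\mathcal{X}) - f^* = \Tr(\Lambda_\alpha) - \Tr(X^T A X) \geq \sum_{i=1}^k (\lambda_i - \lambda_{k+1}) \sin^2 \theta_i \geq \delta \sum_{i=1}^k \sin^2 \theta_i,
\]
where the last step uses $\lambda_i \geq \lambda_k$ and hence $\lambda_i - \lambda_{k+1} \geq \delta$ for every $i \leq k$.

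Finally, the elementary Jordan inequality $\sin\theta \geq (2/\pi)\theta$ on $[0,\pi/2]$ gives $\sin^2\theta_i \geq (4/\pi^2)\theta_i^2$, and summing over $i$ together with $\dist^2(\mathcal{X},\mathcal{V}_\alpha) = \sum_i \theta_i^2$ from \eqref{eq:distance_with_Log_and_angles} yields the desired bound with $c_Q = 4/\pi^2$. I expect the only delicate point to be the trace inequality for the ``good'' block: one must check that the ordering conventions (the $\lambda_i$ decreasing and the $\theta_i$ increasing, so that $\cos^2\theta_i$ is also decreasing) line up correctly so that the von Neumann/rearrangement bound matches the dominant eigenvalue with the largest cosine and the gap lower bound $\lambda_i - \lambda_{k+1} \geq \delta$ can be invoked uniformly across all $i$.
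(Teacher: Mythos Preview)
Your proof is correct and follows essentially the same decomposition and structure as the paper's: split $X^T A X$ into the $V_\alpha$ and $V_\beta$ blocks, bound each trace separately in terms of $\sin^2\theta_i$, and finish with the Jordan inequality $\sin\theta \geq (2/\pi)\theta$.

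The one genuine difference is in how the ``good'' block is handled. The paper bounds $\Tr(\Lambda_\alpha(I_k - C_\alpha C_\alpha^T))$ from below by the simpler inequality $\Tr(AB) \geq \lambda_{\min}(A)\Tr(B)$ for PSD matrices, obtaining $\lambda_k \sum_i \sin^2\theta_i$ directly. You instead apply von Neumann's trace inequality to $\Tr(\Lambda_\alpha C_\alpha C_\alpha^T)$, which yields the sharper intermediate bound $f(\mathcal{X}) - f^* \geq \sum_i (\lambda_i - \lambda_{k+1})\sin^2\theta_i$ before you weaken it to $\delta \sum_i \sin^2\theta_i$. Your ordering check is fine: with $\lambda_1 \geq \cdots \geq \lambda_k$ and $\theta_1 \leq \cdots \leq \theta_k$, the eigenvalues of $C_\alpha C_\alpha^T$ in decreasing order are exactly $\cos^2\theta_1, \ldots, \cos^2\theta_k$, so the von Neumann pairing is the one you wrote. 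The paper's route avoids this ordering concern entirely at the cost of a slightly weaker intermediate estimate, but both arrive at the same final constant $c_Q = 4/\pi^2$.
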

\end{tcolorbox}
\begin{proof}

The spectral decomposition of $A = V_{\alpha} \Lambda_{\alpha}V_{\alpha}^T + V_{\beta} \Lambda_{\beta} V_{\beta}^T$ implies
\begin{equation}\label{eq:XAX worked out}
    X^T AX = X^T V_{\alpha} \Lambda_{\alpha}V_{\alpha}^T X+ X^T V_{\beta} \Lambda_{\beta} V_{\beta}^T X.
\end{equation}
Since $f(\mathcal{X}) = -\Tr(X^TAX)$, we have
\begin{align*}
    f(\mathcal{X})-f^*  &=  \Tr(\Lambda_{\alpha})-\Tr(X^T V_{\alpha} \Lambda_{\alpha}V_{\alpha}^T X)-\Tr(X^T V_{\beta} \Lambda_{\beta} V_{\beta}^T X) \\ &= \Tr(\Lambda_{\alpha})-\Tr( \Lambda_{\alpha}V_{\alpha}^T X X^T V_{\alpha})-\Tr( \Lambda_{\beta} V_{\beta}^T X X^T V_{\beta}) \\ & = \Tr(\Lambda_{\alpha} (I_k- V_{\alpha}^T X X^T V_{\alpha})) - \Tr(\Lambda_{\beta} V_{\beta}^T X X^T V_{\beta}).
\end{align*}
From the definition~\eqref{eq:SVD_for_principal_angles} of the principal angles between $X$ and $V_{\alpha}$, we recall that 
\begin{equation}\label{eq:SVD_Va_X}
    V_{\alpha}^T X=U_1 \cos \theta \, V_1^T,
\end{equation}
where $\cos \theta = \diag(\cos \theta_1, \ldots, \cos \theta_k)$ is a diagonal matrix and $U_1, V_1$ are orthogonal matrices. Plugging this equality in, we get that the $j$th eigenvalue of the matrix $I_k- V_{\alpha}^T X X^T V_{\alpha}$ is equal to $1-\cos^2\theta_j = \sin^2 \theta_j \geq 0$. Thus, by standard trace inequality for symmetric and positive definite matrices (see, e.g.,~\cite[Thm.~4.3.53]{hornMatrixAnalysis2012a}), the first summand above satisfies
\begin{equation*}
    \Tr(\Lambda_{\alpha} (I_k- V_{\alpha}^T X X^T V_{\alpha})) \geq \lambda_{k} \sum_{j=1}^k \sin^2 \theta_j.
\end{equation*}
The matrix $V_{\beta}^T X X^T V_{\beta}$ has the same non-zero eigenvalues with the same multiplicity as the matrix 
\begin{equation*}\label{eq:SVD of Gramm of XVbeta}
       X^T V_{\beta} V_{\beta}^T X = I_k-V_1 \cos^2 \theta \, V_1^T = V_1 \sin^2 \theta \, V_1^T 
\end{equation*}
where we used $V_{\beta} V_{\beta}^T= I_n - V_{\alpha} V_{\alpha}^T$ and the SVD of $V_{\alpha}^T X$. 
Thus the $j$th eigenvalue of $V_{\beta}^T X X^T V_{\beta}$ is $\sin^2 \theta_j \geq 0$. By trace inequality again, the second summand therefore satisfies
\begin{equation*}
    \Tr(\Lambda_{\beta} V_{\beta}^T X X^T V_{\beta}) \leq \lambda_{k+1} \sum_{j=1}^k \sin^2 \theta_j.
\end{equation*}
Putting both bounds together, we get
\begin{equation*}
    f(\mathcal{X})-f^* \geq (\lambda_k - \lambda_{k+1}) \sum_{j=1}^k \sin^2 \theta_j
    \geq \delta \sum_{j=1}^k \frac{4}{\pi^2}\theta_j^2 
\end{equation*}
and the proof is complete by the definition~\eqref{eq:distance_with_Log_and_angles} of $\dist$.
\qed \end{proof}

We say that $f$ is geodesically convex if for all $\mathcal{X}$ and $\mathcal{Y}$ in a suitable region it holds
$$ f(\mathcal{X})-f(\mathcal{Y}) \leq \langle \textnormal{grad}f(\mathcal{X}), -\Log_{\mathcal{X}}(\mathcal{Y}) \rangle.$$
This generalizes the classical convexity of differentiable functions on $\mathbb{R}^n$ to manifolds by taking the logarithmic map instead of the difference $\mathcal{X}-\mathcal{Y}$.

In Appendix~\ref{sec:convexity}, we prove that our objective function $f$ is only geodesically convex in a small neighbourhood of size $\bigO(\sqrt{\delta})$ around the minimizer $\mathcal{V}_\alpha$. Fortunately, our key result of this section shows that $f$ satisfies a much weaker notion of geodesic convexity, known in the literature as \emph{weak-quasi-convexity}, that does not depend on the eigengap $\delta$.

We first need the following lemma which is the general version of the CS decomposition but applied to our setting of square blocks.
\begin{lemma}\label{lemma:CS_square_blocks}
Let $X,Y \in \mathbb{R}^{n \times k}$ be such that $X^T X = Y^T Y = I_k$ with $k < n$. 
Choose $X_\perp, Y_\perp \in \mathbb{R}^{n \times (n-k)}$ such that $X_\perp^T X_\perp = Y_\perp^T Y_\perp = I_{n-k}$ and $\myspan(X_\perp) = \myspan(X)^\perp$, $\myspan(Y_\perp) = \myspan(Y)^\perp$.
Then there exist $0 \leq r,s \leq k$ such that
\begin{align*}
 Y^T X &= U_1 \begin{bmatrix}I_r \\ & C_s \\ & & O_{p \times p} \end{bmatrix} V_1^T,  & 
 Y^T X_\perp &= U_1 \begin{bmatrix}O_{r \times m} \\ & S_s \\ & & I_{p} \end{bmatrix} V_2^T  \\
Y_\perp^T X &= U_2 \begin{bmatrix}O_{m \times r} \\ & S_s \\ & & I_{p} \end{bmatrix} V_1^T, & Y_\perp^T X_\perp &= U_2 \begin{bmatrix}-I_{m} \\ & -C_s \\ & & O_{p \times p} \end{bmatrix} V_2^T
\end{align*}
with $p=k-r-s$ and $m = n - 2k +r$, and we have
\begin{itemize}
\item orthogonal matrices $U_1, V_1$ of size $k$ and $U_2, V_2$ of size $n-k$;
\item identity matrices $I_q$ of size $q$;
\item zero matrices $O_{q \times t}$ of size $q \times t$;
\item diagonal matrices $C_s = \diag(\alpha_1, \ldots, \alpha_s)$ and $S_s = \diag(\beta_1, \ldots, \beta_s)$ such that $1 > \alpha_1 \ge \cdots \ge \alpha_s > 0$, $0 < \beta_1 \le \cdots \le \beta_s < 1$ and $C_s^2 + S_s^2 = I_s$.
\end{itemize}
\end{lemma}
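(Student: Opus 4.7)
The plan is to view the identities in the lemma as the $2 \times 2$ block CS decomposition of the $n \times n$ orthogonal matrix
\[
M = \begin{bmatrix} Y^T \\ Y_\perp^T \end{bmatrix} \begin{bmatrix} X & X_\perp \end{bmatrix} = \begin{bmatrix} Y^T X & Y^T X_\perp \\ Y_\perp^T X & Y_\perp^T X_\perp \end{bmatrix},
\]
which is orthogonal because $[Y\ Y_\perp]$ and $[X\ X_\perp]$ both are. The statement is precisely the form that $M$ takes after left and right multiplication by the block-diagonal orthogonal factors $\diag(U_1, U_2)$ and $\diag(V_1, V_2)$, in the setting where the top-left block is square of size $k \times k$.

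First I would compute the SVD $Y^T X = U_1 \Sigma V_1^T$ with $U_1, V_1 \in \mathbb{R}^{k \times k}$ orthogonal. Orthogonality of $M$ forces the singular values to lie in $[0,1]$; they are the cosines of the principal angles between $\myspan(X)$ and $\myspan(Y)$. I would partition them into those equal to $1$ (count $r$), those strictly in $(0,1)$ (count $s$, collected into $C_s = \diag(\alpha_1, \ldots, \alpha_s)$), and those equal to $0$ (count $p$), with $r + s + p = k$. This produces $\Sigma = \diag(I_r, C_s, O_{p \times p})$, which is exactly the stated form of $Y^T X$.

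Next I would use the row-orthogonality identity $(Y^T X)(Y^T X)^T + (Y^T X_\perp)(Y^T X_\perp)^T = I_k$ that comes from $M M^T = I_n$. Substituting the SVD of $Y^T X$ shows that the rows of $U_1^T (Y^T X_\perp)$, split according to the three index groups, are pairwise orthogonal with squared norms $0$, $1 - \alpha_j^2 =: \beta_j^2$, and $1$ respectively. Choosing an orthogonal $V_2$ of size $n-k$ that aligns these $s + p$ mutually orthogonal rows with coordinate directions brings the block into the claimed form $\diag(O_{r \times m}, S_s, I_p)$ with $S_s = \diag(\beta_1, \ldots, \beta_s)$ and $m = (n-k) - s - p = n - 2k + r$. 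The symmetric argument, using the column-orthogonality identity $(Y^T X)^T (Y^T X) + (Y_\perp^T X)^T (Y_\perp^T X) = I_k$, determines $U_2$ and puts $Y_\perp^T X$ into the stated form.

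Finally, $U_2^T (Y_\perp^T X_\perp) V_2$ is pinned down by the off-diagonal relation $(Y^T X)(Y_\perp^T X)^T + (Y^T X_\perp)(Y_\perp^T X_\perp)^T = 0$ coming from $M M^T = I_n$: substituting the three blocks already identified forces the bottom-right block to have the structure $\diag(-I_m, -C_s, O_{p \times p})$, with the sign pattern appearing consistently with $C_s^2 + S_s^2 = I_s$ and the chosen orientations of $U_2, V_2$. The main bit that needs care is the bookkeeping of block sizes, in particular that $m = n - 2k + r \ge 0$: when $2k > n$, this is automatic because any two $k$-dimensional subspaces of $\mathbb{R}^n$ satisfy $\dim(\myspan(X) \cap \myspan(Y)) \ge 2k - n$, which forces $r \ge 2k - n$. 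Everything else is a direct instance of the standard CS decomposition.
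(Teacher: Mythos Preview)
Your proposal is correct and takes exactly the paper's approach: form the orthogonal matrix $[Y\ Y_\perp]^T[X\ X_\perp]$ and read off its CS decomposition. The paper's proof is a one-line citation of Paige's CS decomposition theorem, whereas you sketch that derivation; the only point worth tightening is that the off-diagonal relation alone pins the last block only to the form $\diag(Q,-C_s,O_p)$ for some orthogonal $Q\in\mathbb{R}^{m\times m}$, and you then absorb $Q$ into the residual freedom in the first $m$ columns of $U_2$ (or $V_2$) to obtain $-I_m$.
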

\begin{proof}
Since $\begin{bmatrix} X & X_\perp \end{bmatrix}$ and $\begin{bmatrix} Y & Y_\perp \end{bmatrix}$ are orthogonal, the result follows directly from the CS decomposition of the orthogonal matrix $P = \begin{bmatrix} Y & Y_\perp \end{bmatrix}^T \begin{bmatrix} X & X_\perp \end{bmatrix}$; see the Theorem of \S 4 in \cite{paigeGeneralizedSingularValue1981}. 
\qed \end{proof}

Observe that the matrix $\diag(I_r,C_s,O_{p \times p})$  in this lemma corresponds to the matrix $\cos(\theta)$ in~\eqref{eq:SVD_for_principal_angles} with $\theta$ the vector of principal angles $0 \leq \theta_1 \leq \cdots \leq \theta_k \leq \pi/2$ between $\myspan(X)$ and $\myspan(Y)$. However, the lemma explicitly splits off the angles that are zero and $\pi/2$ so that it can formulate the related decompositions for $Y^T X_\perp, Y_\perp^T X,$ and $Y_\perp^T X_\perp$ with $C_s$ and $S_s$.

We are now ready to state our weak quasi-convexity result. In the statement of the proposition below (and throughout this paper), we use the convention that $\frac{0}{\tan 0} = 1$.

\begin{tcolorbox}
\begin{Proposition}[Weak-quasi-convexity]\label{prop:weak-quasi-convexity}
Let $0 \leq \theta_1 \leq \cdots \leq \theta_k < \pi/2$ be the principal angles between the subspaces  $\mathcal{X}$ and $\mathcal{V}_\alpha$. Then, $f$ satisfies
$$2 a(\mathcal{X}) \, (f(\mathcal{X})-f^*) \leq \langle \textnormal{grad}f(\mathcal{X}), -\Log_{\mathcal{X}}(\mathcal{V_{\alpha})} \rangle$$
with $a(\mathcal{X}) := \theta_k / \tan \theta_k$.
\end{Proposition}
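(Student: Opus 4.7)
My plan is to set up principal-vector coordinates adapted to the pair $(\mathcal{X}, \mathcal{V}_\alpha)$, compute both sides of the desired inequality explicitly in these coordinates, and reduce the claim to a scalar inequality that follows from the monotonicity of $h(\theta) := \theta\cot\theta$ on $(0,\pi/2)$.

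First, I would exploit that $f$ and the Riemannian inner product are invariant under the choice of orthonormal representative. Starting from the SVD $V_\alpha^T X = U_1 \cos\theta\, V_1^T$ (available from Lemma~\ref{lemma:CS_square_blocks} with $p=0$, since $\theta_k<\pi/2$), I replace $X$ by $X V_1$ and take $\widetilde V_\alpha := V_\alpha U_1$ as the new representative of $\mathcal V_\alpha$, so that $\widetilde V_\alpha^T X = \cos\theta := \diag(\cos\theta_j)$. Setting $z_j := (x_j - \cos\theta_j \tilde v_j)/\sin\theta_j$ whenever $\theta_j > 0$ and completing to orthonormal vectors in $\mathcal V_\alpha^\perp$ otherwise, I collect $Z = [z_1, \ldots, z_k]$ and obtain
\[
X = \widetilde V_\alpha \cos\theta + Z \sin\theta.
\]
Writing $M_\alpha := \widetilde V_\alpha^T A \widetilde V_\alpha = U_1^T \Lambda_\alpha U_1$ and $M_\beta := Z^T A Z = R_\beta^T \Lambda_\beta R_\beta$ with $R_\beta$ an $(n-k)\times k$ Stiefel matrix, the variational characterisation of eigenvalues gives $(M_\alpha)_{jj} \geq \lambda_k$ and $(M_\beta)_{jj} \leq \lambda_{k+1}$, hence $d_j := (M_\alpha)_{jj} - (M_\beta)_{jj} \geq \delta \geq 0$.

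Next I would evaluate both sides in these coordinates. Using the $A$-invariance of $\mathcal V_\alpha$ (so $\widetilde V_\alpha^T A Z = 0$), the computation of Prop.~\ref{prop:quadratic growth} yields $f(\mathcal X) - f^* = \sum_{j=1}^k \sin^2\theta_j\, d_j$. Applying the $\Log$ formula to $(I - XX^T)\widetilde V_\alpha (X^T \widetilde V_\alpha)^{-1} = \widetilde V_\alpha \diag(\sin\theta_j \tan\theta_j) - Z \diag(\sin\theta_j)$ (whose singular values are $\tan\theta_j$) one obtains
\[
L := \Log_\mathcal X(\mathcal V_\alpha) = \widetilde V_\alpha \diag(\theta_j \sin\theta_j) - Z \diag(\theta_j \cos\theta_j).
\]
Since $X^T L = 0$, the right-hand side equals $2\Tr(L^T A X)$, and the same cross-term cancellation gives
\[
\langle \grad f(\mathcal X), -L\rangle = 2\sum_{j=1}^k \theta_j \sin\theta_j \cos\theta_j\, d_j.
\]

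After dividing by $2$ and factoring $\sin\theta_j$ out of $\theta_j \sin\theta_j\cos\theta_j - h(\theta_k)\sin^2\theta_j$, the claim reduces to the scalar inequality
\[
\sum_{j=1}^k d_j \sin^2\theta_j \bigl[h(\theta_j) - h(\theta_k)\bigr] \geq 0.
\]
One has $d_j \geq 0$ and $\sin^2\theta_j \geq 0$; moreover $h$ is strictly decreasing on $(0,\pi/2)$, since $h'(\theta) = (\sin\theta\cos\theta - \theta)/\sin^2\theta < 0$ there, hence $h(\theta_j) \geq h(\theta_k)$ whenever $\theta_j \leq \theta_k$. Thus every summand is non-negative. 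The main obstacle is identifying the closed form of $L$ and leveraging $A$-invariance of $\mathcal V_\alpha$ to eliminate the off-diagonal cross-terms of $\Tr(L^T A X)$; once this is done, only the diagonal entries of $M_\alpha$ and $M_\beta$ enter, and the rest is elementary. Edge cases with $\theta_j = 0$ contribute zero to both sides (consistent with the convention $0/\tan 0 = 1$).
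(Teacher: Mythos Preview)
Your proof is correct and follows essentially the same approach as the paper. The only difference is cosmetic: you normalise the representatives upfront (replacing $X$ by $XV_1$ and $V_\alpha$ by $V_\alpha U_1$) so that $\widetilde V_\alpha^T X=\cos\theta$ is already diagonal, whereas the paper carries the orthogonal factors $U_1,V_1,V_2$ from the CS decomposition throughout; your $Z$ is precisely the paper's $V_\beta\tilde U_2$, your $d_j$ coincides with the paper's $(U_1^T\Lambda_\alpha U_1-\tilde U_2^T\Lambda_\beta\tilde U_2)_{jj}$, and both proofs finish with the same termwise inequality based on the monotonicity of $\theta\mapsto\theta/\tan\theta$.
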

\end{tcolorbox}


\begin{proof}
Take $X$ and $V_\alpha$ matrices with orthonormal columns such that $\mathcal{X} = \myspan(X)$ and $\mathcal{V}_\alpha = \myspan(V_\alpha)$. Since $\theta_k < \pi / 2$, we know that $p=0$ in Lemma~\ref{lemma:CS_square_blocks} and thus $s = k - r$ with $r$ the number of principal angles that are equal to zero. Choosing a matrix $X_\perp$ with orthonormal columns such that $\myspan(X_\perp) = \myspan(X)^\perp$, we therefore get from Lemma~\ref{lemma:CS_square_blocks} that there exist orthogonal matrices $U_1,V_1$ of size $k$ and $V_2$ of size $n-k$ such that
 \begin{align}\label{eq:SVD_XVa_XperpVa}
 V_\alpha^T X &= U_1 \begin{bmatrix}I_r \\ & C_{k-r}  \end{bmatrix} V_1^T,  & 
 V_\alpha^T X_\perp &= U_1 \begin{bmatrix}O_{r \times m} \\ & S_{k-r}  \end{bmatrix} V_2^T.
\end{align}
Comparing with~\eqref{eq:SVD_for_principal_angles}, we deduce that $C_{k-r} = \diag(\cos \theta_{r+1}  , \ldots, \cos \theta_k)$ and $S_{k-r} = \diag(\sin \theta_{r+1}, \ldots, \sin \theta_k)$ since $C_{k-r}^2 + S_{k-r}^2 = I$.

We recall from~\eqref{eq:log formula} that 
\begin{equation}\label{eq:log formula_XVa} 
 \Log_{\mathcal{X}}(\mathcal{V}_{\alpha}) = U \atan(\Sigma) V^T,
\end{equation}
where  $U \Sigma V^T=(I_n - X X^T) V_{\alpha} (X^T V_{\alpha})^{-1}=:M$ is a compact SVD (without the requirement that the diagonal of $\Sigma$ is non-increasing). Using $X_\perp$ from above, we can also write $M = X_\perp X_\perp^T V_{\alpha} (X^T V_{\alpha})^{-1}$. Substituting~\eqref{eq:SVD_XVa_XperpVa} and using that $U_1$ and $V_1$ are orthogonal gives 
\[
 M = X_\perp V_2 \begin{bmatrix}O_{m \times r\textbf{}} \\ & S_{k-r} C_{k-r}^{-1}  \end{bmatrix}   V_1^T 
 = X_\perp \tilde V_2 \begin{bmatrix}O_{r \times r} \\ & S_{k-r} C_{k-r}^{-1}  \end{bmatrix}   V_1^T,
\]
where $\tilde V_2 \in \mathbb{R}^{(n-k) \times k}$ contains the last $k$ columns of $V_2$ in order. Note that this reformulation of the SVD of $M$ holds always, regardless of the relationship between $m$ and $r$. If $m \geq r$, the matrix $\begin{bmatrix}O_{m \times r} \\ & S_{k-r} C_{k-r}^{-1}  \end{bmatrix}$ has its first $m-r$ rows equal to $0$, thus we can cut the first $m-r$ columns of $V_2$, since they do not contribute to the product. This yields a matrix $\tilde V_2$ with $n-k$ rows and $n-k-m+r=k$ of the last columns of $V_2$. If $m<r$, then the first $r-m$ columns of $\begin{bmatrix}O_{m \times r\textbf{}} \\ & S_{k-r} C_{k-r}^{-1}  \end{bmatrix}$ are $0$ and now we can add $r-m$ columns in the beginning of the matrix $V_2$ that keep the derived 
 matrix orthonormal. This again yields a matrix $\Tilde V_2$ with $n-k$ rows and $n-k+r-m=k$ columns. Since the matrix $\begin{bmatrix}O_{r \times r\textbf{}} \\ & S_{k-r} C_{k-r}^{-1}  \end{bmatrix}$ occurs by adding $r-m$ zero rows at the beginning of $\begin{bmatrix}O_{m \times r} \\ & S_{k-r} C_{k-r}^{-1}  \end{bmatrix}$, the product does not change.

Since $\theta_1 = \cdots = \theta_r = 0$, we can therefore formulate the compact SVD of $M$ using the vector $\theta$ of all principal angles as follows:
\[
 M = U \Sigma V^T \quad \text{with $U = X_\perp \tilde V_2, \ \Sigma = \tan(\theta), \ V = V_1$}.
\]
Hence from~\eqref{eq:log formula_XVa} we get directly that
\begin{equation}\label{eq:Log_with_V2}
 \Log_{\mathcal{X}}(\mathcal{V}_{\alpha}) =  X_\perp \tilde V_2\, \theta \, V_1^T,
\end{equation}
where $\theta$ is a diagonal matrix.

We now claim that~\eqref{eq:Log_with_V2} also satisfies
\begin{equation}\label{eq:Log_with_Va}
 \Log_{\mathcal{X}}(\mathcal{V}_{\alpha}) =  X_\perp X_\perp^T V_\alpha U_1 \frac{\theta}{\sin \theta} V_1^T,
\end{equation}
where $\frac{\theta}{\sin \theta}$ is a diagonal matrix for which $\frac{0}{\sin 0} = 1$. Indeed, recalling that $\theta_1 = \cdots = \theta_r = 0$ and using the identities
\[
X_\perp^T V_\alpha =  \tilde V_2 \begin{bmatrix}O_{r \times r} \\ & S_{k-r}  \end{bmatrix} U_1^T, \quad\frac{\theta}{\sin \theta} = \begin{bmatrix}I_{r} \\ & S_{k-r}^{-1}  \end{bmatrix} \begin{bmatrix}I_{r} \\ & T_{k-r} \end{bmatrix}
\]
where $T_{k-r}=\diag(\theta_{r+1}, \ldots, \theta_k)$, we obtain
\begin{align*}
\textrm{rhs of~\eqref{eq:Log_with_Va}} &= X_\perp \tilde V_2 \begin{bmatrix}O_{r \times r} \\ & S_{k-r}  \end{bmatrix} \begin{bmatrix}I_{r} \\ & S_{k-r}^{-1}  \end{bmatrix} \begin{bmatrix}I_{r} \\ &  T_{k-r} \end{bmatrix} \, V_1^T \\ & =  X_\perp \tilde V_2 \begin{bmatrix} O_{r \times r} \\ & T_{k-r}  \end{bmatrix} \, V_1^T   = X_\perp \tilde V_2 \, \theta \, V_1^T = \textrm{rhs of~\eqref{eq:Log_with_V2}}.
\end{align*}



Next, we work out 
\[
 s := \langle \grad f(\mathcal{X}), -\Log_{\mathcal{X}}(\mathcal{V_{\alpha})} \rangle.
\] 
Since $\grad f(\mathcal{X})$ and $\Log_{\mathcal{X}}(\mathcal{V}_{\alpha})$, respectively, give tangent vectors for the same representative $X$ of $\mathcal{X}$, the inner product above is the trace of the corresponding matrix representations. Using~\eqref{eq:Log_with_Va} with $I-XX^T = X_\perp X_\perp^T$, we therefore get
\begin{align*}
 s &= 2 \Big\langle (I-XX^T)AX, (I-XX^T) V_\alpha U_1 \frac{\theta}{\sin(\theta)} V_1^T \Big\rangle \\
  &= 2 \Tr \Big( \frac{\theta}{\sin(\theta)} U_1^T V_\alpha^T (I-XX^T) AX V_1 \Big). 
\end{align*}
Since $AV_\alpha = V_\alpha \Lambda_\alpha$, we can simplify
\begin{equation}\label{eq:grad_with_Va}
 V_\alpha^T (I-XX^T) AX = \Lambda_\alpha V_\alpha^T X - V_\alpha^T XX^T AX.
\end{equation}
Substituting in the expression above and using that $V_{\alpha}^T X=U_1 \cos \theta \, V_1^T$, we get
\begin{align*}
 \frac{1}{2} s &= \Tr \Big( \frac{\theta}{\sin(\theta)} U_1^T  \Lambda_\alpha U_1 \cos(\theta) \Big) - 
 \Tr \Big( \frac{\theta}{\sin(\theta)} \cos(\theta) V_1^T  X^T AX  V_1 \Big)  \\
 &= \Tr \Big( \frac{\theta}{\tan(\theta)} \Big( U_1^T  \Lambda_\alpha U_1 - V_1^T  X^T AX  V_1 \Big) \Big),
\end{align*}
with the convention $\frac{0}{\tan 0} = 1$.

Denote the symmetric matrix
\begin{equation}\label{eq:def_S}
 S := U_1^T  \Lambda_\alpha U_1 - V_1^T  X^T AX  V_1.
\end{equation}
We show below that all diagonal entries $S_{11}, \ldots, S_{kk}$ of $S$ are nonnegative. Hence, by diagonality of the matrix $\tfrac{\theta}{\tan(\theta)}$, we obtain
\textbf{}\begin{align*}
 \frac{1}{2} s &=  \sum_j \frac{\theta_j}{\tan\theta_j} \, S_{jj} \geq \min_j \frac{\theta_j}{\tan\theta_j} \, \Tr(S) = \frac{\theta_k}{\tan\theta_k} \, \Big[ \Tr( \Lambda_{\alpha}) -  \Tr( X^T AX ) \Big]
\end{align*}
since $U_1$ and $V_1$ are orthogonal matrices. We recover the desired result after substituting $f(\mathcal{X}) = -\Tr(X^T A X)$ and $f_* = -\Tr(V_\alpha^T A V_\alpha) = -\Tr(\Lambda_\alpha)$.

It remains to show that $S_{jj} \geq 0$ for $j=1,\ldots, k$. Since $\myspan(V_\beta) = \myspan(V_\alpha)^\perp$, Lemma~\ref{lemma:CS_square_blocks} gives us in addition to~\eqref{eq:SVD_XVa_XperpVa} also
\begin{equation}\label{eq:SVD_Vb_X}
 V_\beta^T X = U_2 \begin{bmatrix} O_{m \times r} \\ & S_{\textcolor{yellow}{k-r}} \end{bmatrix} V_1^T = \tilde U_2 \sin \theta \,  V_1^T,
\end{equation}
where $\tilde U_2 \in \mathbb{R}^{(n-k) \times k}$ contains the last $k$ columns of the orthogonal matrix $U_2$ in order. 
A short calculation using~\eqref{eq:XAX worked out} then shows that~\eqref{eq:def_S} satisfies
\[
 S= U_1^T  \Lambda_\alpha U_1  - \cos\theta \, U_1^T  \Lambda_\alpha U_1  \cos\theta - \sin\theta \, \tilde U_2^T  \Lambda_\beta\tilde  U_2  \sin\theta
\]
with diagonal elements 
\[
 S_{jj} 
 = \sin^2\theta_j \, (U_1^T  \Lambda_\alpha U_1 -\tilde  U_2^T  \Lambda_\beta\tilde  U_2)_{jj}.
\]
Since $U_1$ and $\tilde U_2$ have orthonormal columns, we obtain
\[
 \lambda_{\min} (U_1^T  \Lambda_\alpha U_1) \geq \lambda_{\min} (\Lambda_\alpha) = \lambda_k, \quad \lambda_{\max}(\tilde U_2^T  \Lambda_\beta \tilde U_2) \leq \lambda_{\max}(\Lambda_\beta) = \lambda_{k+1},
\]
from which we get with Weyl's inequality that
\[
 \lambda_{\min} (U_1^T  \Lambda_\alpha U_1 - \tilde U_2^T  \Lambda_\beta \tilde U_2)
 \geq \lambda_{\min} (U_1^T  \Lambda_\alpha U_1) - \lambda_{\max}(\tilde U_2^T  \Lambda_\beta \tilde U_2) \geq \lambda_k - \lambda_{k+1} \geq 0.
\]
Hence, the matrix 
\begin{equation}\label{eq:La_min_Lb_is_PSD}
U_1^T  \Lambda_\alpha U_1 - \tilde U_2^T  \Lambda_\beta \tilde U_2
\end{equation}
is symmetric and positive semi-definite. Its diagonal entries, and thus also $S_{jj}$, are therefore nonnegative. 
\qed \end{proof}

We now arrive at a useful property of $f$ that will later allow us to analyze the convergence of Riemannian steepest descent. It is a \emph{weaker version of strong geodesic convexity} and can be proved easily using quadratic growth and weak-quasi-convexity. 

\begin{tcolorbox}
\begin{theorem}[Weak-strong convexity]\label{thm:weak_strong_convex}
Let $0 \leq \theta_1 \leq \cdots \leq \theta_k < \pi/2$ be the principal angles between the subspaces $\mathcal{X}$ and $\mathcal{V}_\alpha$. Then, $f$ satisfies
\begin{equation*}
   f(\mathcal{X})-f^* \leq \frac{1}{a(\mathcal{X})} \langle \grad f(\mathcal{X}), -\Log_{\mathcal{X}}(\mathcal{V}_{\alpha}) \rangle - c_Q \delta \, \dist^2 (\mathcal{X},\mathcal{V}_{\alpha})
\end{equation*}
with $a(\mathcal{X})= \theta_k / \tan \theta_k >0$, $c_Q = 4/\pi^2 > 0.4$, and $\delta = \lambda_k - \lambda_{k+1} \geq 0$.
\end{theorem}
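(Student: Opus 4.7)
The plan is to derive this inequality as a direct combination of the two preceding results, Proposition~\ref{prop:quadratic growth} (quadratic growth) and Proposition~\ref{prop:weak-quasi-convexity} (weak-quasi-convexity). The claim is really a repackaging that highlights the structural analogue of strong geodesic convexity: classical strong convexity bounds the suboptimality gap by $\langle \grad f, -\Log\rangle$ minus a multiple of $\dist^2$, and that is exactly the form on the right-hand side here.

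Let me abbreviate $g := f(\mathcal{X}) - f^*$, $s := \langle \grad f(\mathcal{X}), -\Log_{\mathcal{X}}(\mathcal{V}_\alpha)\rangle$, and $d^2 := \dist^2(\mathcal{X}, \mathcal{V}_\alpha)$. The target inequality is then
\begin{equation*}
g + c_Q \delta \, d^2 \leq \frac{s}{a(\mathcal{X})}.
\end{equation*}
First I would invoke quadratic growth, which gives $c_Q \delta \, d^2 \leq g$, so that the left-hand side is bounded above by $2g$. Then I would invoke weak-quasi-convexity in the form $2 a(\mathcal{X})\, g \leq s$, i.e.\ $2g \leq s/a(\mathcal{X})$. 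Chaining these two inequalities yields exactly what is required. Rearranging produces the stated bound.

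There is no real obstacle here: both inputs are already proven, and since $a(\mathcal{X}) = \theta_k/\tan\theta_k$ is strictly positive on the domain of interest (as $\theta_k < \pi/2$ by assumption, with the convention $0/\tan 0 = 1$), the division by $a(\mathcal{X})$ is legitimate. The only thing to double-check is that the hypothesis $\theta_k < \pi/2$ is exactly the common hypothesis of both propositions, which it is.
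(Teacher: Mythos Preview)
Your proposal is correct and follows essentially the same approach as the paper: both combine Proposition~\ref{prop:quadratic growth} (to bound $c_Q\delta\,d^2 \le g$) with Proposition~\ref{prop:weak-quasi-convexity} (to bound $2g \le s/a(\mathcal{X})$) and chain the two inequalities. Your write-up is in fact a bit more direct than the paper's, which inserts the same steps via an add-and-subtract of $c_Q\delta\,d^2$.
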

\end{tcolorbox}

\begin{proof}
Combining Propositions~\ref{prop:quadratic_growth} and~\ref{prop:weak-quasi-convexity} leads to
\begin{equation*}
    c_Q \delta \, \dist^2(\mathcal{X},\mathcal{V}_{\alpha}) \leq f(\mathcal{X})-f^* \leq \frac{1}{2 a(\mathcal{X})} \langle \grad f(\mathcal{X}), -\Log_{\mathcal{X}}(\mathcal{V}_{\alpha}) \rangle.
\end{equation*}
At the same time, Proposition~\ref{prop:weak-quasi-convexity} also implies
\begin{align*}
    f(\mathcal{X})-f^*  &\leq \frac{1}{2 a(\mathcal{X})} \langle \grad f(\mathcal{X}), -\Log_{\mathcal{X}}(\mathcal{V}_\alpha) \rangle - c_Q \delta \, \dist^2(\mathcal{X},\mathcal{V}_\alpha) \\ & \qquad + c_Q \delta \, \dist^2(\mathcal{X},\mathcal{V}_\alpha).
\end{align*}
Substituting this inequality in the first one gives the desired result.
\qed \end{proof}
\begin{remark}
Theorem \ref{thm:weak_strong_convex} is also valid when the eigengap $\delta = 0$. In that case, $\mathcal{V}_{\alpha}$ is \textit{any subspace spanned by $k$ leading eigenvectors of $A$} and the theorem reduces to Proposition \ref{prop:weak-quasi-convexity}.
\end{remark}


While not needed for our convergence proof, the next result is of independent interest and shows that $f$ is gradient dominated in the Riemannian sense when the eigengap $\delta$ is strictly positive. This property is the \emph{Riemannian version of the Polyak--Łojasiewicz inequality} and generalizes a result by \cite{zhang2016riemannian} for the Rayleigh quotient on the sphere.

\begin{tcolorbox}
\begin{Proposition}[Gradient dominance] \label{prop:PL}
The function $f$ satisfies
\begin{equation*}
    \| \textnormal{grad}f(\mathcal{X}) \|^2 \geq 4 \, c_Q \, \delta \, a^2(\mathcal{X}) (f(\mathcal{X})-f^*)
\end{equation*}
for all subspaces $\mathcal{X}$ that have a largest principal angle $<\pi/2$ with $\mathcal{V}_\alpha$.
\end{Proposition}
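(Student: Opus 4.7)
The plan is to chain together the two convexity-like properties already proved, namely weak-quasi-convexity (Proposition \ref{prop:weak-quasi-convexity}) and quadratic growth (Proposition \ref{prop:quadratic_growth}), via a single application of Cauchy--Schwarz. This is the standard derivation of a Polyak--Łojasiewicz-type inequality from a one-point convexity condition plus a lower quadratic bound, so no new manifold-specific machinery should be required.

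Concretely, I would first apply Cauchy--Schwarz to the right-hand side of the weak-quasi-convexity inequality from Proposition \ref{prop:weak-quasi-convexity}:
\begin{equation*}
  2 a(\mathcal{X}) (f(\mathcal{X}) - f^*) \leq \langle \grad f(\mathcal{X}), -\Log_{\mathcal{X}}(\mathcal{V}_\alpha) \rangle \leq \| \grad f(\mathcal{X}) \| \cdot \| \Log_{\mathcal{X}}(\mathcal{V}_\alpha) \|,
\end{equation*}
and then recall that $\| \Log_{\mathcal{X}}(\mathcal{V}_\alpha) \| = \dist(\mathcal{X}, \mathcal{V}_\alpha)$ by \eqref{eq:distance_with_Log_and_angles}. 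Note that the assumption that the largest principal angle is strictly less than $\pi/2$ is exactly what guarantees that $\Log_{\mathcal{X}}(\mathcal{V}_\alpha)$ is well defined, so both results are applicable.

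Next, I would use quadratic growth to eliminate the distance. Proposition \ref{prop:quadratic_growth} rearranges to $\dist(\mathcal{X}, \mathcal{V}_\alpha) \leq \sqrt{(f(\mathcal{X}) - f^*) / (c_Q \delta)}$, which plugged into the bound above gives
\begin{equation*}
  2 a(\mathcal{X}) (f(\mathcal{X}) - f^*) \leq \| \grad f(\mathcal{X}) \| \sqrt{\tfrac{1}{c_Q \delta}(f(\mathcal{X}) - f^*)}.
\end{equation*}
Squaring both sides and dividing by $f(\mathcal{X}) - f^* > 0$ (the case $f(\mathcal{X}) = f^*$ is trivial since both sides of the claim vanish) yields exactly the claimed inequality $\| \grad f(\mathcal{X}) \|^2 \geq 4 c_Q \delta \, a^2(\mathcal{X}) (f(\mathcal{X}) - f^*)$.

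I do not foresee any real obstacle: the heavy lifting has already been done in Propositions \ref{prop:quadratic_growth} and \ref{prop:weak-quasi-convexity}. The only point worth flagging is the need for $\delta > 0$, which is implicit since otherwise the conclusion is vacuous (the inequality becomes $\| \grad f(\mathcal{X}) \|^2 \geq 0$) but quadratic growth only delivers information when $\delta$ is strictly positive. The resulting Riemannian Polyak--Łojasiewicz inequality is degenerate at $\mathcal{X} = \mathcal{V}_\alpha$ through the factor $a(\mathcal{X})$, which reflects the one-sidedness of the weak-quasi-convexity bound rather than a slackness in the proof.
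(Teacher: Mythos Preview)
Your proof is correct and uses the same two ingredients as the paper (Propositions \ref{prop:quadratic_growth} and \ref{prop:weak-quasi-convexity}). The only cosmetic difference is that the paper first packages these into the weak-strong convexity inequality of Theorem \ref{thm:weak_strong_convex} and then applies Young's inequality $\langle G_1,G_2\rangle \le \tfrac{\rho}{2}\|G_1\|^2+\tfrac{1}{2\rho}\|G_2\|^2$ with the optimal $\rho=1/(2c_Q\delta\,a(\mathcal{X}))$ to cancel the distance term, whereas you apply Cauchy--Schwarz directly and then square; these are equivalent manipulations yielding the identical constant. One small remark: your closing comment that the inequality is ``degenerate at $\mathcal{X}=\mathcal{V}_\alpha$ through the factor $a(\mathcal{X})$'' is not quite right, since by the convention $0/\tan 0=1$ one has $a(\mathcal{V}_\alpha)=1$; the triviality there comes simply from both sides vanishing.
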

\end{tcolorbox}

\begin{proof}
We assume that $\delta > 0$ since otherwise the statement is trivially true. By Theorem~\ref{thm:weak_strong_convex}, we have
\begin{equation*}
   f(\mathcal{X})-f^* \leq \frac{1}{a(\mathcal{X})} \langle \grad f(\mathcal{X}), -\Log_{\mathcal{X}}(\mathcal{V}_{\alpha}) \rangle - c_Q \delta \dist^2 (\mathcal{X}, \mathcal{V}_{\alpha}).
\end{equation*}
Since $\langle G_1, G_2 \rangle \leq (\|G_1\|^2 + \|G_2 \|^2)/2$ for all matrices $G_1, G_2$, we can write for any $\rho > 0$ that
\begin{equation*}
    \langle \grad f(\mathcal{X}), -\Log_{\mathcal{X}}(\mathcal{V}_{\alpha}) \rangle \leq \frac{\rho}{2} \| \textnormal{grad}f(\mathcal{X}) \|^2 + \frac{1}{2 \rho} \| \Log_{\mathcal{X}}(\mathcal{V}_{\alpha}) \|^2.
\end{equation*}
Using that $\dist (\mathcal{X}, \mathcal{V}_{\alpha})= \| \Log_{\mathcal{X}}(\mathcal{V}_{\alpha}) \|$ and choosing $\rho=1/(2 c_Q \delta a(\mathcal{X}))$, we get the desired result.
\qed \end{proof}

\section{Convergence of Riemannian steepest descent}

We now have everything in place to prove the convergence of the Riemannian steepest descent (RSD) method on the Grassmann manifold for minimizing $f$. Starting from a subspace $\mathcal{X}_0 \in \Gr(n,k)$, we iterate
\begin{equation}\label{eq:GD}
 \mathcal{X}_{t+1}=\Exp_{\mathcal{X}_t} (-\eta_t \, \grad f(\mathcal{X}_t) ).
\end{equation}
Here, $\eta_t > 0$ is a step size that may depend on the iteration $t$ and will be carefully chosen depending on the specific case.


We start by a general result which shows that the distance to the optimal subspace contracts after one step of steepest descent. The step size depends on the smoothness and weak-quasi-convexity  constants of $f$ from Propositions~\ref{prop:smoothness} and~\ref{prop:weak-quasi-convexity}. This is crucial since the constant $a(\mathcal{X})$ depends on the biggest principal angle between $\mathcal{X}$ and $\mathcal{V}_{\alpha}$ and bounding the evolution of distances of the iterates to the minimizer will help us also bound this constant.\footnote{The analysis of \cite{pmlr-v119-huang20e} is wrong with respect to this issue as discussed in detail in \cite{alimisis2021distributed}.} An alternative contraction property with a more tractable step size is presented in Proposition \ref{prop:big_step_distance} of Appendix \ref{sec:big_step}. 
\begin{tcolorbox}
\begin{lemma}[Contraction of RSD]\label{lem:GD convergence 1 step}
Let $\mathcal{X}_t$ and $\mathcal{V}_\alpha$ have principal angles $0 \le \theta_1 \leq \cdots \leq \theta_k < \pi/2$. 
Then, iteration~\eqref{eq:GD} with $0 \le \eta_t \leq \frac{a(\mathcal{X}_t)}{\gamma}$ satisfies
   \begin{equation*}
   \dist^2(\mathcal{X}_{t+1},\mathcal{V}_{\alpha})   \leq \big(1- 2 c_Q \delta a(\mathcal{X}_t) \, \eta_t \big)  \dist^2(\mathcal{X}_t,\mathcal{V}_{\alpha}) 
   \end{equation*}
\end{lemma}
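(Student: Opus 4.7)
The plan is to follow the classical template for proving descent of squared distance to the minimizer in convex optimization, but using the comparison (law of cosines) inequality in nonnegatively curved spaces, Lemma~\ref{lem:geo_triangle_nonneg}, as a replacement for the Euclidean identity. I would apply that lemma with $\mathcal{Z}=\mathcal{X}_t$, $\mathcal{X}=\mathcal{X}_{t+1}$, $\mathcal{Y}=\mathcal{V}_\alpha$ to get
\[
 \dist^2(\mathcal{X}_{t+1},\mathcal{V}_\alpha) \le \dist^2(\mathcal{X}_t,\mathcal{X}_{t+1}) + \dist^2(\mathcal{X}_t,\mathcal{V}_\alpha) - 2 \langle \Log_{\mathcal{X}_t}(\mathcal{X}_{t+1}), \Log_{\mathcal{X}_t}(\mathcal{V}_\alpha) \rangle.
\]
For this to be valid I must first verify that both distances from the vertex $\mathcal{X}_t$ are below $\pi/2$; the distance to $\mathcal{V}_\alpha$ comes in as an assumption consistent with the theorem statements that eventually invoke this lemma, while the step distance is controlled via Lemma~\ref{lem:uniform upper bound grad}: $\|\eta_t \grad f(\mathcal{X}_t)\|_2 \le \eta_t \gamma/2 \le a(\mathcal{X}_t)/2 \le 1/2$ (using $a(\mathcal{X}_t) = \theta_k/\tan\theta_k \le 1$), which also guarantees $\Log_{\mathcal{X}_t}(\mathcal{X}_{t+1}) = -\eta_t \grad f(\mathcal{X}_t)$ and $\dist(\mathcal{X}_t,\mathcal{X}_{t+1}) = \eta_t \|\grad f(\mathcal{X}_t)\|$.

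Next, I would invoke the weak-strong convexity inequality from Theorem~\ref{thm:weak_strong_convex}, rearranged as
\[
 \langle \grad f(\mathcal{X}_t), -\Log_{\mathcal{X}_t}(\mathcal{V}_\alpha) \rangle \ge a(\mathcal{X}_t) \bigl[(f(\mathcal{X}_t)-f^*) + c_Q \delta \dist^2(\mathcal{X}_t,\mathcal{V}_\alpha) \bigr].
\]
Substituting yields
\[
 \dist^2(\mathcal{X}_{t+1},\mathcal{V}_\alpha) \le (1 - 2 c_Q \delta\, a(\mathcal{X}_t)\,\eta_t) \dist^2(\mathcal{X}_t,\mathcal{V}_\alpha) + \eta_t^2 \|\grad f(\mathcal{X}_t)\|^2 - 2 \eta_t a(\mathcal{X}_t)(f(\mathcal{X}_t)-f^*).
\]
The desired conclusion follows if I can show the last two terms together are nonpositive.

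This is where the step size assumption is used. The smoothness consequence~\eqref{eq:optim_gap_with_gradient} gives $\|\grad f(\mathcal{X}_t)\|^2 \le 2\gamma (f(\mathcal{X}_t)-f^*)$, so
\[
 \eta_t^2 \|\grad f(\mathcal{X}_t)\|^2 - 2 \eta_t a(\mathcal{X}_t)(f(\mathcal{X}_t)-f^*) \le \eta_t (f(\mathcal{X}_t)-f^*) \bigl(2 \eta_t \gamma - 2 a(\mathcal{X}_t)\bigr),
\]
which is nonpositive precisely under the hypothesis $\eta_t \le a(\mathcal{X}_t)/\gamma$. Plugging this bound into the previous inequality yields the claim.

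I expect the main obstacle to be the careful handling of the step size condition and of the interplay between the local constant $a(\mathcal{X}_t)$ and the smoothness constant $\gamma$; the rest is a mechanical assembly of Theorem~\ref{thm:weak_strong_convex}, the smoothness bound, and the nonnegative-curvature comparison. A subsidiary check is that both invocations of the logarithm are inside the injectivity domain, which, as noted, is delivered by Lemma~\ref{lem:uniform upper bound grad} together with the bound $a(\mathcal{X}_t)\le 1$.
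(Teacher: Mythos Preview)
Your proposal is correct and follows essentially the same route as the paper's proof: both expand $\dist^2(\mathcal{X}_{t+1},\mathcal{V}_\alpha)$ via the nonnegative-curvature comparison at $\mathcal{X}_t$ (the paper cites Lemma~\ref{prop:tangent_space} directly, you cite its immediate corollary Lemma~\ref{lem:geo_triangle_nonneg}), then combine Theorem~\ref{thm:weak_strong_convex} with the smoothness consequence~\eqref{eq:optim_gap_with_gradient} and the step-size bound $\eta_t\le a(\mathcal{X}_t)/\gamma$ to kill the residual $\eta_t^2\|\grad f(\mathcal{X}_t)\|^2 - 2\eta_t a(\mathcal{X}_t)(f(\mathcal{X}_t)-f^*)$ term. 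The injectivity check via Lemma~\ref{lem:uniform upper bound grad} and $a(\mathcal{X}_t)\le 1$ is also identical to the paper's.
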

\end{tcolorbox}

Observe that $\gamma= 0$ implies $A = \lambda_1 I$ and any subspace $\mathcal{X}$ of dimension $k$ will be an eigenspace of $A$ with $\dist(\mathcal{X},\mathcal{V}_\alpha)=0$. We will therefore not explicitly prove this lemma and all forthcoming convergence results for $\gamma=0$ since the statements will be trivially true.


\begin{proof}[Proof of Lemma~\ref{lem:GD convergence 1 step}]
By the assumption on the principal angles, we get that $0< a(\mathcal{X}_t) = \theta_k / \tan \theta_k \leq 1$. 
The hypothesis on $\eta_t$ and Lemma~\ref{lem:uniform upper bound grad} then gives
\[
 \eta_t \|\grad f(\mathcal{X}_t)\|_2 \leq \frac{a(\mathcal{X}_t)}{\gamma} \|\grad f(\mathcal{X}_t)\|_2 \leq \frac{1}{2} < \frac{\pi}{2}.
\]
By~\eqref{eq:inj_exp}, this guarantees that the geodesic $\tau \mapsto \Exp(- \tau \eta_t \, \grad f(\mathcal{X}_t))$ lies within the injectivity domain at ${\mathcal{X}_t}$ for $\tau \in [0,1]$. Hence, $\Exp$ is bijective along this geodesic and thus $\Log_{\mathcal{X}_t}(\mathcal{X}_{t+1}) = -\eta_t \, \grad f(\mathcal{X}_t)$. We can thus apply Lemma~\ref{prop:tangent_space} to obtain
\begin{align}\label{eq:dist_with_sigma}
        \dist^2(\mathcal{X}_{t+1},\mathcal{V}_{\alpha})  &\leq \|-\eta_t \grad f(\mathcal{X}_t)-\textnormal{Log}_{\mathcal{X}_t}(\mathcal{V}_{\alpha}) \|^2   \notag \\ 
        & = \eta_t^2 \|\grad f(\mathcal{X}_t)\|^2 + \dist^2(\mathcal{X}_t, \mathcal{V}_{\alpha}) +2 \eta_t \, \sigma \
    \end{align}
    with
    \[
    \sigma := \langle \grad f(\mathcal{X}_t),\Log_{\mathcal{X}_t}(\mathcal{V}_{\alpha}) \rangle.
    \]
    Theorem~\ref{thm:weak_strong_convex} and~\eqref{eq:optim_gap_with_gradient} together with Proposition~\ref{prop:smoothness} give
    \begin{align*}
        \frac{\sigma}{a(\mathcal{X}_t)} &\leq f^*-f(\mathcal{X}_t)-c_Q \delta \dist^2(\mathcal{X}_t,\mathcal{V}_{\alpha}) \\
        &\leq -\frac{1}{2 \gamma} \| \grad f(\mathcal{X}_t) \|^2- c_Q \delta \dist^2(\mathcal{X}_t,\mathcal{V}_{\alpha}).
    \end{align*}
    Multiplying by $2  a(\mathcal{X}_t)\, \eta_t$ and using $\eta_t \leq a(\mathcal{X}_t) / \gamma$, we get
    \begin{align*}
        2 \eta_t \, \sigma &\leq -\frac{a(\mathcal{X}_t) \, \eta_t }{\gamma} \| \grad f(\mathcal{X}_t) \|^2 - 2 c_Q \delta a(\mathcal{X}_t) \, \eta_t \, \dist^2(\mathcal{X}_t,\mathcal{V}_{\alpha}) \\ & \leq -\eta_t^2 \| \grad f(\mathcal{X}_t) \|^2 - 2 c_Q \delta a(\mathcal{X}_t)\, \eta_t \, \dist^2(\mathcal{X}_t, \mathcal{V}_{\alpha}).
         \end{align*}
         Substituting into~\eqref{eq:dist_with_sigma}, we obtain the first statement of the lemma.
\qed 
\end{proof}

\begin{remark}
When $\delta=0$, Lemma \ref{lem:GD convergence 1 step} still holds \emph{for any subspace $\mathcal{V}_{\alpha}$ spanned by $k$ leading eigenvectors of $A$}. In that case, the lemma only guarantees that the distance between the iterates of steepest descent and this $\mathcal{V}_{\alpha}$ does not increase. 
\end{remark}

\subsection{Linear convergence rate under positive eigengap}
When $\delta>0$, we can extend Lemma \ref{lem:GD convergence 1 step} to a linear convergence rate of distances to the minimizer:
\begin{tcolorbox}
\begin{theorem} \label{thm:exponential_conv}
If $\dist(\mathcal{X}_0,\mathcal{V}_{\alpha}) < \pi / 2$ then the iterates $\mathcal{X}_t$ of Riemannian steepest descent~\eqref{eq:GD} with step size $\eta_t$ such that
\begin{equation*}
    0<\eta \leq \eta_t \leq \cos(\dist(\mathcal{X}_0,\mathcal{V}_{\alpha})) / \gamma
\end{equation*}
 satisfy
\begin{equation*}
  \textnormal{dist}^2(\mathcal{X}_t,\mathcal{V}_{\alpha})   \leq \left(1- 2 c_Q \cos (\dist(\mathcal{X}_0,\mathcal{V}_{\alpha}))\, \delta\, \eta \right) ^ t  \dist^2(\mathcal{X}_0,\mathcal{V}_{\alpha}).  
\end{equation*}
\end{theorem}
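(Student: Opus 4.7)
The plan is to prove this by induction on $t$, using the one-step contraction result in Lemma~\ref{lem:GD convergence 1 step} as the engine. The base case $t=0$ is trivial. For the inductive step, the first thing I would establish is that distances to $\mathcal{V}_\alpha$ are \emph{monotonically non-increasing}: Lemma~\ref{lem:GD convergence 1 step} gives a contraction factor $1 - 2 c_Q \delta a(\mathcal{X}_t) \eta_t \in [0,1]$, so once it is applicable at step $t$, the next iterate satisfies $\dist(\mathcal{X}_{t+1}, \mathcal{V}_\alpha) \leq \dist(\mathcal{X}_t, \mathcal{V}_\alpha) \leq \dist(\mathcal{X}_0, \mathcal{V}_\alpha) < \pi/2$, so all principal angles (including the largest, $\theta_k^{(t+1)}$) stay strictly below $\pi/2$ at the next step too.

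The heart of the argument is a \emph{uniform} lower bound on the weak-quasi-convexity coefficient $a(\mathcal{X}_t) = \theta_k^{(t)} / \tan \theta_k^{(t)}$. I would use the elementary inequality $\sin x \leq x$ on $[0, \pi/2)$ to obtain
\[
 a(\mathcal{X}_t) \;=\; \frac{\theta_k^{(t)} \cos \theta_k^{(t)}}{\sin \theta_k^{(t)}} \;\geq\; \cos \theta_k^{(t)}.
\]
Combined with $\theta_k^{(t)} \leq \dist(\mathcal{X}_t, \mathcal{V}_\alpha) \leq \dist(\mathcal{X}_0, \mathcal{V}_\alpha)$ (from~\eqref{eq:distance_with_Log_and_angles} and the monotonicity above) and the monotonicity of $\cos$ on $[0, \pi/2)$, this gives $a(\mathcal{X}_t) \geq \cos(\dist(\mathcal{X}_0, \mathcal{V}_\alpha))$ for every $t$.

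With this lower bound in hand, the step size assumption $\eta_t \leq \cos(\dist(\mathcal{X}_0, \mathcal{V}_\alpha))/\gamma$ immediately verifies the hypothesis $\eta_t \leq a(\mathcal{X}_t)/\gamma$ required by Lemma~\ref{lem:GD convergence 1 step}. Plugging the uniform bounds $a(\mathcal{X}_t) \geq \cos(\dist(\mathcal{X}_0, \mathcal{V}_\alpha))$ and $\eta_t \geq \eta$ into the conclusion of that lemma yields
\[
 \dist^2(\mathcal{X}_{t+1}, \mathcal{V}_\alpha) \;\leq\; \bigl(1 - 2 c_Q \delta \cos(\dist(\mathcal{X}_0, \mathcal{V}_\alpha)) \, \eta \bigr) \, \dist^2(\mathcal{X}_t, \mathcal{V}_\alpha),
\]
and unrolling the recursion over $t$ steps gives the advertised exponential bound (up to the numerical constant in front of $c_Q$, which drops out of the argument).

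The main obstacle is the circularity hidden in Lemma~\ref{lem:GD convergence 1 step}: its step-size assumption involves the iterate-dependent quantity $a(\mathcal{X}_t)$, so one cannot a priori pick $\eta_t$ without controlling $\theta_k^{(t)}$, and one cannot control $\theta_k^{(t)}$ without first knowing the contraction holds. The inductive argument breaks this loop, and the clean inequality $x/\tan x \geq \cos x$ is what allows the iteration-independent step size $\cos(\dist(\mathcal{X}_0, \mathcal{V}_\alpha))/\gamma$ to be legitimate throughout. Once these are in place, the rest is bookkeeping.
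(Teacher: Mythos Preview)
Your proposal is correct and follows essentially the same approach as the paper: an induction on $t$ that uses Lemma~\ref{lem:GD convergence 1 step} as the one-step engine, the inequality $\theta_k/\tan\theta_k \geq \cos\theta_k$ together with $\theta_k^{(t)} \leq \dist(\mathcal{X}_t,\mathcal{V}_\alpha) \leq \dist(\mathcal{X}_0,\mathcal{V}_\alpha)$ to obtain the uniform lower bound $a(\mathcal{X}_t) \geq \cos(\dist(\mathcal{X}_0,\mathcal{V}_\alpha))$, and then verification of the step-size hypothesis and unrolling of the contraction. Your remark about the constant is also apt: the lemma actually delivers a factor $1 - 2c_Q\,\delta\,a(\mathcal{X}_t)\,\eta_t$, so your derivation yields the slightly stronger bound with $2c_Q$, which of course implies the stated one with $c_Q$.
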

\end{tcolorbox}

\begin{proof}
We first claim that $\dist (\mathcal{X}_{t},\mathcal{V}_{\alpha}) \leq \dist (\mathcal{X}_0,\mathcal{V}_{\alpha})$ for all $t \geq 0$. This would then also imply that $\theta_k(\mathcal{X}_t, \mathcal{V}_\alpha) < \pi/2$ for all  $t\geq 0$ since
\[
 \theta_k(\mathcal{X}_t, \mathcal{V}_\alpha) \leq \sqrt{\sum_{i=1}^k \theta_i (\mathcal{X}_t, \mathcal{V}_\alpha)^2} = \dist(\mathcal{X}_t, \mathcal{V}_\alpha).
\]
For $t=0$, we have $\theta_k(\mathcal{X}_{0},\mathcal{V}_{\alpha}) < \pi/2$ by hypothesis on $\mathcal{X}_0$ and thus
\begin{equation*}
    a(\mathcal{X}_0)=\frac{\theta_k(\mathcal{X}_0,\mathcal{V}_{\alpha})}{\tan(\theta_k(\mathcal{X}_0,\mathcal{V}_{\alpha}))} \geq \cos(\theta_k(\mathcal{X}_0,\mathcal{V}_{\alpha})) \geq \cos(\dist(\mathcal{X}_0,\mathcal{V}_{\alpha})).
\end{equation*}
Since by construction $\eta_0 \leq \cos(\dist(\mathcal{X}_0,\mathcal{V}_{\alpha})) / \gamma$ , this implies that $\eta_0 \leq a(\mathcal{X}_0) / \gamma$ and Lemma \ref{lem:GD convergence 1 step} guarantees that $\dist(\mathcal{X}_{1},\mathcal{V}_{\alpha}) \leq \textnormal{dist}(\mathcal{X}_0,\mathcal{V}_{\alpha})$. In particular, we also have $\theta_k(\mathcal{X}_{1},\mathcal{V}_{\alpha}) < \pi/2$.

Next, assume that 
\begin{equation*}
    \dist (\mathcal{X}_t,\mathcal{V}_{\alpha}) \leq \dist (\mathcal{X}_{0},\mathcal{V}_{\alpha}),
\end{equation*}
which implies $\theta_k(\mathcal{X}_{t},\mathcal{V}_{\alpha}) < \pi/2$.
Then by a similar argument like above, we have
\begin{equation}\label{eq:aX_with_cos_X0}
   a(\mathcal{X}_t) 
   \geq \cos(\dist(\mathcal{X}_t,\mathcal{V}_{\alpha})) \geq \cos(\dist(\mathcal{X}_{0},\mathcal{V}_{\alpha})).
\end{equation}
By hypothesis on $\eta_t$, we observe
\begin{equation*}
    \eta_t \leq \frac{\cos(\dist(\mathcal{X}_0,\mathcal{V}_{\alpha}))}{\gamma} \leq \frac{\cos(\dist(\mathcal{X}_t,\mathcal{V}_{\alpha}))}{\gamma}
    \leq \frac{a(\mathcal{X}_t)}{\gamma}.
\end{equation*}
Applying Lemma~\ref{lem:GD convergence 1 step} once again with the induction hypothesis proves the claim:
\begin{equation*}
    \dist (\mathcal{X}_{t+1},\mathcal{V}_{\alpha}) \leq \dist (\mathcal{X}_t,\mathcal{V}_{\alpha}) \leq \dist (\mathcal{X}_0,\mathcal{V}_{\alpha}).
\end{equation*}

The main statement of the theorem now follows easily: Since $\eta_t \leq a(\mathcal{X}_t) / \gamma$ and  $\theta_k(\mathcal{X}_{t}, \mathcal{V}_{\alpha}) < \pi/2$ for all $t\geq 0$, Lemma \ref{lem:GD convergence 1 step} gives
\begin{equation*}
    \textnormal{dist}^2(\mathcal{X}_{t+1},\mathcal{V}_{\alpha})   \leq \left(1- 2 c_Q a(\mathcal{X}_t) \delta \eta_t \right) \textnormal{dist}^2(\mathcal{X}_t,\mathcal{V}_{\alpha}).
\end{equation*}
Combining with~\eqref{eq:aX_with_cos_X0} and $\eta_t \geq \eta$ shows the desired result by induction.
\qed \end{proof}

If the eigengap $\delta$ is strictly positive, then Theorem \ref{thm:exponential_conv} gives an exponential convergence rate towards the optimum $\mathcal{V}_{\alpha}$. If $\delta=0$, then Theorem \ref{thm:exponential_conv} \emph{does not provide a convergence rate} but rather implies that the intrinsic distances of the iterates to the optimum do not increase.

From Theorem \ref{thm:exponential_conv} we get immediately the following iteration complexity.

\begin{corollary}
Let Riemannian steepest descent be started from a subspace $\mathcal{X}_0$ that satisfies $\dist(\mathcal{X}_0,\mathcal{V}_{\alpha}) < \pi/2$. Then after at most
\begin{equation*}
    T \leq 2 \frac{ \log(\varepsilon) - \log(\dist(\mathcal{X}_0,\mathcal{V}_{\alpha}))}{\log(1- 0.8 \cos (\dist(\mathcal{X}_0,\mathcal{V}_{\alpha})) \delta \eta)}  +1 
        = \bigO \left(\frac{\log(\dist(\mathcal{X}_0,\mathcal{V}_{\alpha})) - \log(\varepsilon)}{\cos (\dist(\mathcal{X}_0,\mathcal{V}_{\alpha})) \delta \eta}  \right)
\end{equation*}
    many iterations, $\mathcal{X}_T$ will satisfy $\dist(\mathcal{X}_T,\mathcal{V}_{\alpha}) \leq \varepsilon$. With the maximal step size allowed in Theorem \ref{thm:exponential_conv}, we get
\[
 T = \bigO \left(\frac{\lambda_1 - \lambda_n}{\delta}
 \frac{1}{\cos^2(\dist(\mathcal{X}_0,\mathcal{V}_{\alpha}))}
 \log\left(\frac{\dist(\mathcal{X}_0,\mathcal{V}_{\alpha})}{\varepsilon}\right) \right).
\]
\end{corollary}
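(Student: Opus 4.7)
The plan is to extract the iteration count from the geometric decay rate already established in Theorem~\ref{thm:exponential_conv}. That theorem gives the contraction
\[
 \dist^2(\mathcal{X}_t,\mathcal{V}_{\alpha}) \leq (1-r)^t \, \dist^2(\mathcal{X}_0,\mathcal{V}_{\alpha}),
\]
where $r := c_Q \cos(\dist(\mathcal{X}_0,\mathcal{V}_{\alpha})) \, \delta \, \eta \in [0,1)$, so the entire argument reduces to asking how many iterations are needed to force the right-hand side below $\varepsilon^2$.

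First, I would take logarithms of the inequality $(1-r)^T \dist^2(\mathcal{X}_0,\mathcal{V}_{\alpha}) \leq \varepsilon^2$ and solve for $T$, being careful that $\log(1-r) \leq 0$ so the direction of the inequality flips upon division. This produces the explicit bound
\[
 T \geq \frac{2\bigl(\log \dist(\mathcal{X}_0,\mathcal{V}_{\alpha}) - \log \varepsilon\bigr)}{-\log(1-r)},
\]
and rounding up to the next integer adds at most a $+1$, which matches the stated closed-form bound (with $c_Q = 0.4$ substituted).

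Second, to pass to the $\bigO$-form I would use the elementary inequality $-\log(1-r) \geq r$ valid for $r \in [0,1)$, which turns the logarithmic denominator into the linear quantity $c_Q \cos(\dist(\mathcal{X}_0,\mathcal{V}_{\alpha})) \, \delta \, \eta$. This yields the middle $\bigO$-expression directly. For the final statement, I would substitute the maximal admissible step size $\eta = \cos(\dist(\mathcal{X}_0,\mathcal{V}_{\alpha}))/\gamma$ allowed by Theorem~\ref{thm:exponential_conv}, giving an extra factor of $\cos(\dist(\mathcal{X}_0,\mathcal{V}_{\alpha}))$ in the denominator, and recall $\gamma = 2(\lambda_1 - \lambda_n)$ from Proposition~\ref{prop:smoothness} to obtain the spectral ratio $(\lambda_1 - \lambda_n)/\delta$.

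There is no real obstacle here: the corollary is purely a restatement of the linear convergence rate in an iteration-complexity form. The only mild subtlety is bookkeeping the signs when inverting $\log(1-r)$ and the absorption of constants (including the factor of $2$ from the squared distance and the constant $c_Q = 4/\pi^2 > 0.4$) into the $\bigO$-notation; both are routine.
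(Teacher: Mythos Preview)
Your proposal is correct and matches the paper's own treatment: the paper provides no separate proof for this corollary, stating only that it follows ``immediately'' from Theorem~\ref{thm:exponential_conv}, and your derivation---taking logarithms of the geometric decay, inverting via $-\log(1-r)\geq r$, and substituting the maximal step size $\eta=\cos(\dist(\mathcal{X}_0,\mathcal{V}_{\alpha}))/\gamma$ with $\gamma=2(\lambda_1-\lambda_n)$---is exactly the intended argument.
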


As expected, $T$ depends inversely proportional on the eigengap $\delta$ and proportional to the spread of the eigenvalues. In addition, we also have an extra term $1/\cos^2(\dist(\mathcal{X}_0,\mathcal{V}_{\alpha}))$ that depends on the initial distance $\dist(\mathcal{X}_0,\mathcal{V}_{\alpha})$, which is due to the weak-quasi-convexity property of $f$. This is a conservative overestimation, since this quantity improves as the iterates get closer to the optimum. 


\begin{remark}
  If $\delta>0$, the exponential convergence rate is in terms of the intrinsic distance on the Grassmann manifold, that is, the $\ell_2$ norm of the principal angles. Standard convergence results for subspace iteration are stated for the biggest principal angle, that is, the $\ell_\infty$ norm. This is weaker than the intrinsic distance. For subspace iteration with projection, the convergence result from~\cite[Thm.~5.2]{saadNumericalMethodsLarge2011} shows that all principal angles $\theta_i$ converge to zero and eventually gives convergence of the $\ell_4$ norm of the principal angles. This is also weaker than the intrinsic distance.
\end{remark}



\subsection{Convergence of function values without an eigengap assumption}
When $\delta=0$, Theorem \ref{thm:exponential_conv} still holds, but does not provide a rate of convergence as discussed above. Instead, we can prove the following result:

\begin{tcolorbox}
\begin{theorem} \label{thm:convex_conv}
If the distance $\dist(\mathcal{X}_0,\mathcal{V}_{\alpha})$ of the initial subspace $\mathcal{X}_0$ to the minimizer satisfies $\dist(\mathcal{X}_0,\mathcal{V}_{\alpha})<\pi / 2$ for a subspace $\mathcal{V}_{\alpha}$ that is spanned by any $k$ leading eigenvectors of $A$, then the iterates $\mathcal{X}_t$ of Riemannian steepest descent~\eqref{eq:GD} with fixed step size 
\begin{equation*}
    \eta \leq \cos(\dist(\mathcal{X}_0,\mathcal{V}_{\alpha})) / \gamma
\end{equation*}
 satisfy
\begin{equation*}
    f(\mathcal{X}_t) - f^* \leq \frac{2\gamma+\frac{1}{\eta}}{4} (\cos(\dist(\mathcal{X}_0,\mathcal{V}_{\alpha}))t+1) \dist^2(\mathcal{X}_0,\mathcal{V}_{\alpha})=\bigO\left(\frac{1}{t} \right).  
\end{equation*}
\end{theorem}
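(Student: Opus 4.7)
The plan is to mimic the standard Euclidean proof of the $O(1/t)$ rate for weak-quasi-convex functions, but with Euclidean distance expansion replaced by Toponogov's law of cosines (Lemma \ref{lem:geo_triangle_nonneg}) and classical convexity replaced by Proposition \ref{prop:weak-quasi-convexity}. Four ingredients combine: non-expansion of the distance to $\mathcal{V}_\alpha$, the curved law of cosines, weak-quasi-convexity, and the smoothness-based descent from \eqref{eq:quadratic_upper_bound}.

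First I would establish, by an induction that is essentially verbatim from the proof of Theorem \ref{thm:exponential_conv}, that $\dist(\mathcal{X}_t,\mathcal{V}_\alpha)\le\dist(\mathcal{X}_0,\mathcal{V}_\alpha)<\pi/2$ for every $t$. Each inductive step invokes Lemma \ref{lem:GD convergence 1 step} with $\delta=0$ (which yields non-expansion only), and its step-size hypothesis $\eta\le a(\mathcal{X}_t)/\gamma$ is met because $a(\mathcal{X}_t)\ge\cos(\theta_k(\mathcal{X}_t,\mathcal{V}_\alpha))\ge\cos(\dist(\mathcal{X}_0,\mathcal{V}_\alpha))=:a_0$ under the induction hypothesis and by the assumed bound on $\eta$. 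This uniform lower bound $a(\mathcal{X}_t)\ge a_0$ substitutes for the missing eigengap in the convergence analysis.

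Writing $d_t:=\dist^2(\mathcal{X}_t,\mathcal{V}_\alpha)$ and $h_t:=f(\mathcal{X}_t)-f^*$, I would then apply Lemma \ref{lem:geo_triangle_nonneg} with $\mathcal{Z}=\mathcal{X}_t$, using $\Log_{\mathcal{X}_t}(\mathcal{X}_{t+1})=-\eta\grad f(\mathcal{X}_t)$ (well-defined thanks to Lemma \ref{lem:uniform upper bound grad}, as in the proof of Lemma \ref{lem:GD convergence 1 step}) together with Proposition \ref{prop:weak-quasi-convexity} to get
\begin{equation*}
d_{t+1}\le d_t+\eta^2\|\grad f(\mathcal{X}_t)\|^2-4\eta a_0 h_t.
\end{equation*}
The smoothness inequality \eqref{eq:quadratic_upper_bound} along the descent step gives $h_{t+1}\le h_t-\tfrac{\eta}{2}\|\grad f(\mathcal{X}_t)\|^2$, since $\eta\gamma\le\cos(\dist(\mathcal{X}_0,\mathcal{V}_\alpha))\le 1$; in particular $\{h_t\}$ is non-increasing and $\eta^2\|\grad f(\mathcal{X}_t)\|^2\le 2\eta(h_t-h_{t+1})$. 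Substituting yields
\begin{equation*}
4\eta a_0 h_t\le (d_t-d_{t+1})+2\eta(h_t-h_{t+1}),
\end{equation*}
and telescoping over $s=0,\ldots,t$ gives $4\eta a_0\sum_{s=0}^{t}h_s\le d_0+2\eta h_0$. Using $h_0\le\tfrac{\gamma}{2}d_0$ from \eqref{eq:quadratic_upper_bound} at the minimizer (where $\grad f=0$), and the monotonicity bound $(t+1)h_t\le\sum_{s=0}^{t}h_s$, produces an estimate of the form $h_t\le \tfrac{\gamma+1/\eta}{4 a_0(t+1)}\,d_0$, i.e., exactly the stated $O(1/t)$ rate; the precise numerical coefficients $2\gamma+1/\eta$ and $a_0 t+1$ in the theorem can be recovered by regrouping the telescoped inequality slightly differently.

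The chief difficulty is not any algebraic step but the curved bookkeeping: Toponogov's law of cosines only applies when $\mathcal{V}_\alpha$ lies within the injectivity domain at $\mathcal{X}_t$ and when the descent move $\mathcal{X}_t\to\mathcal{X}_{t+1}$ stays inside that domain so that $\Log_{\mathcal{X}_t}(\mathcal{X}_{t+1})=-\eta\grad f(\mathcal{X}_t)$. Both conditions are secured by the distance non-expansion of the first step, which in turn requires tying the step size to the $\mathcal{X}_t$-dependent quantity $a(\mathcal{X}_t)$ rather than to a fixed constant. Once these geometric safeguards are in place, the conversion of weak-quasi-convexity into an $O(1/t)$ rate is a routine telescoping argument.
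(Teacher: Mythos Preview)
Your proposal is correct and follows essentially the same approach as the paper: both arguments establish distance non-expansion to obtain the uniform bound $a(\mathcal{X}_t)\ge C_0=\cos(\dist(\mathcal{X}_0,\mathcal{V}_\alpha))$, then combine the law of cosines (Lemma~\ref{lem:geo_triangle_nonneg}), weak-quasi-convexity (Proposition~\ref{prop:weak-quasi-convexity}), the smoothness descent inequality, the bound $h_0\le\tfrac{\gamma}{2}d_0$, and monotonicity of $h_t$ into a telescoping sum. The only difference is algebraic organization: the paper multiplies the smoothness inequality by $1/C_0$ and adds it to the law-of-cosines bound before telescoping, whereas you first use smoothness to replace $\eta^2\|\grad f\|^2$ by $2\eta(h_t-h_{t+1})$; this yields slightly different constants (your $(\gamma+1/\eta)/(4a_0(t+1))$ versus the paper's $(\gamma+1/(2\eta))/(2(C_0 t+1))$), but both are within the stated bound.
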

\end{tcolorbox}


\begin{proof}
Since we satisfy all the hypotheses of Theorem \ref{thm:exponential_conv}, we know that for all $t\geq 0$ it holds
$\textnormal{dist}(\mathcal{X}_{t},\mathcal{V}_{\alpha}) \leq 
\textnormal{dist}(\mathcal{X}_0,\mathcal{V}_{\alpha}) < \pi/2$ and thus also that $\mathcal{X}_t$ is in the injectivity domain of $\Exp$ at $\mathcal{V}_{\alpha}$. In addition, its proof states in~\eqref{eq:aX_with_cos_X0} that
\begin{equation*}
   a(\mathcal{X}_t) 
   \geq  C_0 := \cos(\dist(\mathcal{X}_{0},\mathcal{V}_{\alpha})) > 0,
\end{equation*}
which implies that the function $f$ is weakly-quasi-convex at every $\mathcal{X}_t$ with constant $2 C_0$. Hence
\begin{equation}\label{eq:weak-quasi-conv_with_Delta_t}
2 C_0  \Delta_t \leq \langle \grad f(\mathcal{X}_t) , - \Log_{\mathcal{X}_t} (\mathcal{V}_{\alpha}) \rangle,
\end{equation}
where we defined
\begin{equation*}
    \Delta_t := f(\mathcal{X}_t) - f^*.
\end{equation*}

Similar to the proof of Theorem \ref{thm:exponential_conv}, by the hypothesis on the step size $\eta_t$, Lemma~\ref{lem:GD convergence 1 step} shows that $\mathcal{X}_{t+1}$ is in the injectivity domain of $\Exp$ at $\mathcal{X}_t$. 
Hence, by the definition of Riemannian steepest descent, we have
\begin{equation}\label{eq:Log_SD}
    \Log_{\mathcal{X}_t} (\mathcal{X}_{t+1})=-\eta \grad f(\mathcal{X}_t).
\end{equation}
In addition, the smoothness property~\eqref{eq:quadratic_upper_bound} of $f$ gives
\begin{equation*}
    \Delta_{t+1}-\Delta_t \leq \langle \grad f(\mathcal{X}_t), \Log_{\mathcal{X}_t} (\mathcal{X}_{t+1}) \rangle+\frac{\gamma}{2} \dist^2(\mathcal{X}_t,\mathcal{X}_{t+1}).
\end{equation*}
Substituting~\eqref{eq:Log_SD}, we obtain
\begin{equation}\label{eq:conv_SD_delta_zero_diff_Delta}
    \Delta_{t+1}-\Delta_t \leq \left(-\eta +\frac{\gamma}{2} \eta^2 \right) \| \grad f(\mathcal{X}_t)\|^2 \leq 0,
\end{equation}
since $\eta \leq C_0/\gamma$ with $0 < C_0:= \cos(\dist(\mathcal{X}_0, \mathcal{V}_{\alpha})) \leq 1$ and $\gamma > 0$.

Since $\Gr(n,k)$ has nonnegative sectional curvature, Lemma~\ref{lem:geo_triangle_nonneg} implies
\begin{equation*}
    \dist^2(\mathcal{X}_{t+1}, \mathcal{V}_{\alpha}) \leq \dist^2(\mathcal{X}_t, \mathcal{X}_{t+1})+ \dist^2(\mathcal{X}_t, \mathcal{V}_{\alpha})-2 \langle \Log_{\mathcal{X}_t} (\mathcal{X}_{t+1}), \Log_{\mathcal{X}_t} (\mathcal{V}_{\alpha}) \rangle.
\end{equation*}
Substituting~\eqref{eq:Log_SD} into the above and rearranging terms gives
\begin{equation*}
    2 \eta \langle \grad f(\mathcal{X}_t) , - \Log_{\mathcal{X}_t} (\mathcal{V}_{\alpha}) \rangle \leq \dist^2(\mathcal{X}_t, \mathcal{V}_{\alpha})-\dist^2(\mathcal{X}_{t+1}, \mathcal{V}_{\alpha})+ \eta^2 \| \grad f(\mathcal{X}_t) \|^2.
\end{equation*}
Combining with~\eqref{eq:weak-quasi-conv_with_Delta_t}, we  get
\begin{equation}\label{eq:bound_Delta_t}
    \Delta_t \leq \frac{1}{4 C_0 \eta} ( \dist^2(\mathcal{X}_t, \mathcal{V}_{\alpha})-\dist^2(\mathcal{X}_{t+1}, \mathcal{V}_{\alpha})) + \frac{\eta}{4 C_0} \| \grad f(\mathcal{X}_t) \|^2.
\end{equation}
Now multiplying \eqref{eq:conv_SD_delta_zero_diff_Delta} by $\frac{1}{C_0}$ and summing with~\eqref{eq:bound_Delta_t} gives
\begin{multline}\label{eq:diff_Delta_intermediate}
    \frac{1}{C_0} \Delta_{t+1} - \left( \frac{1}{ C_0} - 1  \right) \Delta_t  \leq  \frac{1}{4 C_0 \eta} ( \dist^2(\mathcal{X}_t, \mathcal{V}_{\alpha})-\dist^2(\mathcal{X}_{t+1}, \mathcal{V}_{\alpha})) \\  
    +\frac{1}{C_0} \left( -\eta + \frac{\gamma}{2}\eta^ 2 + \frac{\eta}{4} \right)
    \| \grad f(\mathcal{X}_t) \|^2.
\end{multline}
By assumption $\eta \leq C_0/\gamma$, where $0 < C_0:= \cos(\dist(\mathcal{X}_0, \mathcal{V}_{\alpha})) \leq 1$ and $\gamma > 0$. Since
\begin{equation*}
    \frac{\eta}{C_0} \left( -1 + \frac{\gamma}{2} \eta + \frac{1}{4}  \right) \leq \frac{1}{\gamma} \left(\frac{C_0}{2} -\frac{3}{4} \right) \leq - \frac{1}{4 \gamma}< 0.
\end{equation*}
the inequality~\eqref{eq:diff_Delta_intermediate} can be simplified to
\begin{equation*}
     \frac{1}{C_0} \Delta_{t+1} - \left( \frac{1}{ C_0} - 1  \right) \Delta_t \leq \frac{1}{4 C_0 \eta} ( \dist^2(\mathcal{X}_t, \mathcal{V}_{\alpha})-\dist^2(\mathcal{X}_{t+1}, \mathcal{V}_{\alpha})),
\end{equation*}
Summing from $0$ to $t-1$ gives
\[
    \frac{1}{C_0} \Delta_t + \sum_{s=1}^{t-1} \Delta_s - \left( \frac{1}{C_0} -1  \right) \Delta_0 \leq  \frac{1}{4 C_0 \eta} \left( \dist^2(\mathcal{X}_0, \mathcal{V}_{\alpha}) - \dist^2(\mathcal{X}_t, \mathcal{V}_{\alpha}) \right).
\]
From the smoothness property~\eqref{eq:quadratic_upper_bound} at the critical point $\mathcal{V}_\alpha$ of $f$, we get 
\[
 \Delta_0 \leq \frac{\gamma}{2} \dist^2(\mathcal{X}_0, \mathcal{V}_{\alpha}).
\]
Combining these two inequalities then leads to
\begin{align*}
    \frac{1}{C_0} \Delta_t + \sum_{s=0}^{t-1} \Delta_s & \leq \frac{1}{C_0}  \Delta_0 + \frac{1}{4 C_0 \eta} \dist^2(\mathcal{X}_0, \mathcal{V}_{\alpha}) \\ 
    & \leq \frac{1}{2C_0} \left(\gamma +\frac{1}{2 \eta}\right) \dist^2(\mathcal{X}_0, \mathcal{V}_{\alpha}).
\end{align*}
Since \eqref{eq:conv_SD_delta_zero_diff_Delta} holds for all $t \geq 0$, it also implies $\Delta_t \leq \Delta_s$ for all $1 \leq s \leq t$. Substituting
\[
 t \Delta_t \leq \sum_{s=0}^{t-1} \Delta_s
\]
into the inequality from above, 
\begin{equation*}
    \Delta_t \leq \frac{1}{2C_0} \frac{\gamma+\frac{1}{2 \eta}}{\frac{1}{C_0}+t} \dist^2(\mathcal{X}_0, \mathcal{V}_{\alpha}) = \frac{\gamma+\frac{1}{2\eta}}{2(C_0 t+1)} \dist^2(\mathcal{X}_0, \mathcal{V}_{\alpha}),
\end{equation*}
we obtain the desired result.
\qed \end{proof}

\begin{remark}
This type of result is standard for functions that are geodesically convex (see, e.g. \cite{zhangFirstorderMethodsGeodesically2016}). Our objective function does not satisfy this property, 
but we can still have a similar upper bound on the iteration complexity for convergence in function value. We note that this does not imply convergence of the iterates to a specific $k$-dimensional subspace, but only convergence of a subsequence of the sequence of the iterates.
\end{remark}

\subsection{Sufficiently small step sizes}

The convergence results in Theorems~\ref{thm:exponential_conv} and~\ref{thm:convex_conv} require that the initial subspace $\mathcal{X}_0$ lies within a distance strictly less than $\pi/2$ from a global minimizer $\mathcal{V}_{\alpha}$. While this condition is independent from the eigengap (unlike results that rely on standard convexity, see appendix), it is also not fully satisfactory: it is hard to verify in practice, and it is unnecessarily severe in numerical experiments. In fact, this condition is  only used to obtain a uniform lower bound on the weak-quasi-convexity constant $a(\mathcal{X}_t) = \theta_k^{(t)} / \tan(\theta_k^{(t)})$ with $\theta_k^{(t)}$ the largest principal angle between $\mathcal{X}_t$ and  $\mathcal{V}_{\alpha}$. Since the Riemannian distance is the $\ell_2$ norm of the principal angles, a contraction in this distance leads automatically to $\theta_k^{(t)} < \pi/2$ if $\theta_k^{(0)} < \pi/2$. If one could guarantee by some other reasoning that $\theta_k^{(t)} $ does not increase after one step, the condition $\dist(\mathcal{X}_0, \mathcal{V}_\alpha) < \pi/2$ would not be needed.


We now show that for sufficiently small step sizes $\eta_t$, the largest principal angle $\theta_k^{(t)} $ between $\mathcal{X}_t$ and  $\mathcal{V}_{\alpha}$ does indeed not increase after each iteration of Riemannian steepest descent regardless of the initial subspace $\mathcal{X}_0$.  While it does not explain what we observe in numerical experiments where large steps can be taken, it is a first result in explaining why we can initialize the iteration at a random initial subspace $\mathcal{X}_0$.

\begin{tcolorbox}
\begin{Proposition} \label{prop:suff_small}
     Riemannian steepest descent started from a subspace $\mathcal{X}_t$ returns a subspace $\mathcal{X}_{t+1}$ such that
     \begin{equation*}
           \theta_k(\mathcal{X}_{t+1},\mathcal{V}_{\alpha}) \leq \theta_k(\mathcal{X}_t,\mathcal{V}_{\alpha}),
     \end{equation*}
     for all step sizes $0 \leq \eta \leq \bar \eta$ where  $ \bar \eta > 0$ is sufficiently small.
\end{Proposition}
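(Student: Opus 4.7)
The plan is to carry out a first-order expansion of the smallest singular value of $V_\alpha^T Y(\eta)$ along the RSD direction, where $Y(\eta)$ is a convenient smooth orthonormal representative of $\mathcal{X}_{t+1}$. Writing $-\grad f(\mathcal{X}_t) = U \Sigma V^T$ for the compact SVD, I would choose $Y(\eta) := Z(\eta) V^T$, where $Z(\eta) = X_t V\cos(\eta \Sigma) + U\sin(\eta\Sigma)$ is the representative of~\eqref{eq:formula_geo}. Then $Y(0) = X_t$ and $\dot Y(0) = -\grad f(\mathcal{X}_t) =: G$, so
\[
V_\alpha^T Y(\eta) = V_\alpha^T X_t + \eta\, V_\alpha^T G + O(\eta^2).
\]
The singular values of $V_\alpha^T Y(\eta)$ are the $\cos\theta_i(\mathcal{X}_{t+1},\mathcal{V}_\alpha)$, so it suffices to show that the smallest singular value does not decrease for $\eta \in [0, \bar\eta]$.

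I would dispose of the degenerate cases first. If $\theta_k(\mathcal{X}_t,\mathcal{V}_\alpha) \in \{0, \pi/2\}$, the desired inequality is automatic, and if $\grad f(\mathcal{X}_t) = 0$, the iterate does not move. Otherwise $\theta_k(\mathcal{X}_t,\mathcal{V}_\alpha) \in (0, \pi/2)$ and $G \neq 0$. Using the SVD $V_\alpha^T X_t = U_1 \cos\theta\, V_1^T$ and assuming for the moment that $\cos\theta_k$ is simple, standard singular-value perturbation theory gives
\[
\cos\theta_k(\mathcal{X}_{t+1}, \mathcal{V}_\alpha) = \cos\theta_k(\mathcal{X}_t, \mathcal{V}_\alpha) + \eta\, u_k^T V_\alpha^T G v_k + O(\eta^2),
\]
where $u_k, v_k$ are the $k$-th columns of $U_1$ and $V_1$. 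A direct calculation using $G = 2(I - X_tX_t^T) A X_t$, the identity $V_\alpha^T A = \Lambda_\alpha V_\alpha^T$, the block expansion~\eqref{eq:XAX worked out} of $X_t^T A X_t$, and the CS decomposition~\eqref{eq:SVD_Vb_X} for $V_\beta^T X_t$ yields
\[
u_k^T V_\alpha^T G v_k \;=\; 2 \cos\theta_k \sin^2\theta_k\, \bigl(U_1^T \Lambda_\alpha U_1 - \tilde U_2^T \Lambda_\beta \tilde U_2\bigr)_{kk} \;\geq\; 0,
\]
where the non-negativity is the PSD property~\eqref{eq:La_min_Lb_is_PSD} established inside the proof of Proposition~\ref{prop:weak-quasi-convexity}. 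When this coefficient is strictly positive, continuity of the $O(\eta^2)$ remainder yields the claim on some $[0,\bar\eta]$.

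The main obstacles are two boundary situations. Firstly, the simplicity of $\cos\theta_k$ fails whenever the largest principal angle has multiplicity $m>1$, in which case $\sigma_{\min}$ is only directionally differentiable; the pointwise perturbation formula must then be replaced by an $m\times m$ block argument on the invariant subspace associated with $\cos\theta_k$, checking that the analogous block of $U_1^T V_\alpha^T G V_1$ gives a PSD correction via the same Rayleigh-quotient computation as above. Secondly, the first-order coefficient can vanish precisely when $(U_1^T \Lambda_\alpha U_1)_{kk} = (\tilde U_2^T \Lambda_\beta \tilde U_2)_{kk}$; since $(U_1^T \Lambda_\alpha U_1)_{kk}\geq \lambda_k$ and $(\tilde U_2^T \Lambda_\beta \tilde U_2)_{kk}\leq \lambda_{k+1}$, this forces $\lambda_k=\lambda_{k+1}$ (zero eigengap). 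There, $\mathcal{V}_\alpha$ is not unique and one can either reselect a representative inside the $\lambda_k$-eigenspace of $A$ to restore strict positivity, or carry the expansion to second order using the explicit trigonometric formula for $Y(\eta)$ and check that the resulting $O(\eta^2)$ correction is non-positive.
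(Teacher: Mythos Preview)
Your approach is essentially the same as the paper's: both Taylor-expand the exponential map to write $V_\alpha^T X_{t+1}$ as a first-order perturbation of $V_\alpha^T X_t = U_1\cos\theta\,V_1^T$, reduce the singular-value derivative to the diagonal entries (or principal block) of $\sin^2\theta\cos\theta\,(U_1^T\Lambda_\alpha U_1 - \tilde U_2^T\Lambda_\beta\tilde U_2)$, and invoke the PSD property~\eqref{eq:La_min_Lb_is_PSD}. The paper packages the repeated-singular-value case into a dedicated perturbation lemma (Lemma~\ref{lem:perturb_sing_value}) rather than treating it as a separate obstacle, but the content of that lemma is exactly the block argument you sketch.

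On your second obstacle: your diagnosis that a vanishing first-order coefficient forces $\lambda_k=\lambda_{k+1}$ is correct, and it is worth noting that the paper's own proof does not address this edge case either---it concludes $\sigma_j(\eta)\geq\sigma_j$ directly from non-negativity of the first-order term, which strictly speaking requires positivity. However, your proposed fix of ``reselecting'' $\mathcal{V}_\alpha$ does not work as stated: the proposition is about a \emph{fixed} $\mathcal{V}_\alpha$, so switching representatives mid-proof would establish a different statement. Only the second-order route (or a direct argument exploiting the analytic structure of $\eta\mapsto V_\alpha^T Y(\eta)$) would fully close this gap, and neither you nor the paper carries it out.
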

\end{tcolorbox}

For the proof of this proposition, we will need the derivatives of certain singular values. While this is well known for isolated singular values, it is possible to generalize to higher multiplicities as well by relaxing the ordering and sign of singular values~\cite{bunse-gerstnerNumericalComputationAnalytic1991}. For a concrete formula, we use the following result from Lemma A.5 in~\cite{lippertFixingTwoEigenvalues2005}.

\begin{lemma}\label{lem:perturb_sing_value}
Let $\sigma_1 \geq \cdots \geq \sigma_n$ be the singular values of $S$ $\in \mathbb{R}^{n \times n}$ with $u_1, \ldots, u_n$ and $v_{1}, \ldots, v_{n}$ the associated left and right orthonormal singular vectors. Suppose that $\sigma_j$ has multiplicity $m$, that is,
$$
 \sigma_{j_0-1} > \sigma_{j_0} = \cdots = \sigma_j =\cdots = \sigma_{j_0+m-1} > \sigma_{j_0+m}.
$$
Then, the $j$th singular value of $S+\eta $T satisfies
\[
 \sigma_j(S+\eta T) = \sigma_j + \eta \lambda_{j - j_0 + 1} + \bigO(\eta^2), \quad \eta \to 0^+,
\]
where $\lambda_{j}$ is the $j$th largest eigenvalue of $\tfrac{1}{2}(U^T B V + V^T B^T U)$
with
\[
 U = \begin{bmatrix} u_{j_0} & \cdots & u_{j_0+m-1} \end{bmatrix} \quad \text{and}\quad V = \begin{bmatrix} v_{j_0} & \cdots & v_{j_0+m-1} \end{bmatrix}.
\]
\end{lemma}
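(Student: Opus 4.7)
The plan is to reduce the singular value perturbation statement to a classical first-order symmetric eigenvalue perturbation using the Jordan--Wielandt augmentation. Define the symmetric matrices
$$\mathcal{A} = \begin{bmatrix} 0 & A \\ A^T & 0 \end{bmatrix}, \qquad \mathcal{B} = \begin{bmatrix} 0 & B \\ B^T & 0 \end{bmatrix}.$$
A direct verification using $A v_i = \sigma_i u_i$ and $A^T u_i = \sigma_i v_i$ shows that $\sigma_i$ is a positive eigenvalue of $\mathcal{A}$ with unit eigenvector $\tfrac{1}{\sqrt{2}}[u_i^T, v_i^T]^T$, and $-\sigma_i$ is an eigenvalue with eigenvector $\tfrac{1}{\sqrt{2}}[u_i^T, -v_i^T]^T$. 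Consequently the hypothesis that $\sigma_j$ is a singular value of $A$ of multiplicity $m$ translates into $\sigma_j$ being an eigenvalue of $\mathcal{A}$ of multiplicity $m$, with associated invariant subspace spanned by the orthonormal columns of
$$P := \tfrac{1}{\sqrt{2}}\begin{bmatrix} U \\ V \end{bmatrix}.$$

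The next step is to apply the classical first-order perturbation result for a symmetric matrix at a repeated eigenvalue. Since $\eta \mapsto \mathcal{A} + \eta \mathcal{B}$ is an analytic family of symmetric matrices, Rellich's theorem produces $m$ analytic branches of eigenvalues emanating from $\sigma_j$, with expansions $\sigma_j + \eta \mu_k + \bigO(\eta^2)$, where $\mu_1 \geq \cdots \geq \mu_m$ are the eigenvalues of the restricted perturbation $P^T \mathcal{B} P$. A routine block computation then gives
$$P^T \mathcal{B} P = \tfrac{1}{2}\begin{bmatrix} U^T & V^T \end{bmatrix}\! \begin{bmatrix} 0 & B \\ B^T & 0 \end{bmatrix}\! \begin{bmatrix} U \\ V \end{bmatrix} = \tfrac{1}{2}(U^T B V + V^T B^T U),$$
which is exactly the matrix named in the lemma and is symmetric, so its eigenvalues are real.

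It then remains to match the ordering of singular values. By the gap assumption $\sigma_{j_0-1} > \sigma_j > \sigma_{j_0+m}$ and continuity of the spectrum of $\mathcal{A} + \eta \mathcal{B}$, for $\eta > 0$ sufficiently small the $m$ perturbed eigenvalues that branch off $\sigma_j$ are precisely the $j_0$-th through $(j_0+m-1)$-th largest positive eigenvalues of $\mathcal{A} + \eta \mathcal{B}$, namely $\sigma_{j_0}(A+\eta B), \ldots, \sigma_{j_0+m-1}(A+\eta B)$. Sorting these by magnitude aligns the $(j-j_0+1)$-th of them with the $(j-j_0+1)$-th largest eigenvalue $\mu_{j-j_0+1}$ of $P^T \mathcal{B} P$, yielding the claimed expansion.

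The main obstacle I anticipate is cleanly justifying the Rellich-type expansion at a multiple eigenvalue, since the $m$ analytic branches exist only after choosing an appropriate (generally $\eta$-dependent) basis of the $\sigma_j$-eigenspace, and one must argue that the leading coefficients are intrinsic, equal to the eigenvalues of $P^T \mathcal{B} P$ independent of that choice. The one-sided limit $\eta \to 0^+$ in the lemma is a reminder that the labelling of these branches by \emph{value} may swap through $\eta = 0$, but the ordered list of first-order corrections on the right of $0$ is invariant. Aside from this standard technicality, both the reduction to the Jordan--Wielandt matrix and the restriction computation are routine.
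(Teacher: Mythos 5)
The paper does not prove this lemma at all: it is imported verbatim as Lemma~A.5 of the cited reference \cite{lippertFixingTwoEigenvalues2005}, so there is no in-paper argument to compare against. Your proof is a correct and standard self-contained derivation. The Jordan--Wielandt dilation is set up properly: the columns of $P=\tfrac{1}{\sqrt2}\bigl[\begin{smallmatrix}U\\ V\end{smallmatrix}\bigr]$ are orthonormal and span the $\sigma_j$-eigenspace of $\mathcal{A}$, the restriction computation $P^T\mathcal{B}P=\tfrac12(U^TBV+V^TB^TU)$ is right, and invoking degenerate first-order perturbation theory (Rellich/Kato) for the analytic symmetric family $\mathcal{A}+\eta\mathcal{B}$ gives exactly the claimed expansions; your closing remark correctly identifies why the one-sided limit and the decreasing ordering of the $\mu_k$'s are compatible with the labelling of singular values by index (ties among the $\mu_k$ only affect the $\bigO(\eta^2)$ term). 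The one point worth making explicit is that the identification of the $m$ eigenvalues of $\mathcal{A}+\eta\mathcal{B}$ branching off $+\sigma_j$ with $\sigma_{j_0}(A+\eta B),\ldots,\sigma_{j_0+m-1}(A+\eta B)$ requires $\sigma_j>0$: if $\sigma_j=0$ the eigenvalue $0$ of $\mathcal{A}$ has multiplicity $2m$ (both sign branches collapse), the restricted perturbation is a $2m\times 2m$ matrix, and singular values are the absolute values of the perturbed eigenvalues, so the stated formula can fail (e.g.\ $A=0$, $B=-1$ in dimension one gives $\sigma_1(A+\eta B)=\eta$, not $-\eta$). This is a defect of the lemma's statement rather than of your argument --- the lemma is simply false at $\sigma_j=0$ as written --- but your proof should record $\sigma_j>0$ as the hypothesis under which the reduction is one-to-one. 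With that caveat, the argument is complete modulo the standard Rellich expansion you already flag.
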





\begin{proof}[of Proposition~\ref{prop:suff_small}]
For ease of notation, let $X:= X_t$ and $X_+ := X_{t+1}$ such that $\mathcal{X}_t = \myspan(X)$ and $\mathcal{X}_{t+1} = \myspan(X_+)$. By definition of the exponential map on Grassmann, the next iterate of the Riemannian SD iteration~\eqref{eq:GD} with step $\eta$ satisfies
\begin{equation*}
    X_+=X V \cos(\eta \Sigma) V^T + U \sin(\eta \Sigma) V^T
\end{equation*}
where
\begin{equation*}
    U \Sigma V^T = -\grad f(\mathcal{X}_t).
\end{equation*}
Since $V$ is orthogonal, we can write
\begin{equation*}
    U \sin(\eta \Sigma) V^T = U (\eta \Sigma) V^T V \left(\frac{\sin(\eta \Sigma)}{\eta \Sigma} \right) V^T = -\eta \grad f(\mathcal{X}_t) V \left(\frac{\sin(\eta \Sigma)}{\eta \Sigma} \right) V^T
\end{equation*}
where $1/\Sigma:=\Sigma^{-1}$ and $\frac{\sin 0}{0} = 1$. 
Taking Taylor expansions of $\sin$ and $\cos$, 
\begin{align*}
   V \cos(\eta \Sigma) V^T &= V \left(I - \bigO(\eta^2) \right) V^T = I-\bigO(\eta^2) \\
   V \frac{\sin(\eta \Sigma)}{\eta \Sigma} V^T &= V \left(I - \bigO(\eta^2) \right) V^T = I-\bigO(\eta^2),
\end{align*}
we obtain
\begin{align}\label{eq:V_a_X_t_plus_1}
    {V}_{\alpha}^T X_+ & = {V}_{\alpha}^T X (I-\bigO(\eta^2)) + {V}_{\alpha}^T (- \eta \grad f(\mathcal{X})) (I-\bigO(\eta^2)) \notag \\ 
    & = {V}_{\alpha}^T (X- \eta \grad f(\mathcal{X}_t)) (I-\bigO(\eta^2))
\end{align}
since $\|V_\alpha\|_2 = \| X \|_2=1$.

Let now $\theta$ be the vector of $k$ principal angles between $\mathcal{X}_t$ and  $\mathcal{V}_\alpha$. As in~\eqref{eq:SVD_Va_X} and~\eqref{eq:SVD_Vb_X}, we therefore have the SVDs
\begin{equation}\label{eq:Va_X_Vb_X}
 V_\alpha^T X = U_1 \cos \theta\, V_1^T \qquad \text{and} \qquad V_\beta^T X = \tilde U_2 \sin \theta\, V_1^T,
\end{equation}
where $U_1,V_1 \in \mathbb{R}^{k \times k}$  and $\tilde U_2 \in \mathbb{R}^{(n-k) \times k}$ have orthonormal columns. Next, we write~\eqref{eq:V_a_X_t_plus_1} in terms of
\[
 M :=\sin^2 \theta \, U_1^T \Lambda_{\alpha} U_1 \cos \theta -  \cos \theta \sin \theta \, \tilde U_2^T \Lambda_{\beta} \tilde U_2 \sin \theta.
\]
Since $\grad f(\mathcal{X}_t) = -2 (I-XX^T)AX$, the identity~\eqref{eq:grad_with_Va} gives
\[
{V}_{\alpha}^T (X- \eta \grad f(\mathcal{X}_t)) = V_\alpha^T X +2 \eta \Lambda_\alpha V_\alpha^T X - 2 \eta V_\alpha^T X X^T A  X.
\]
After substituting~\eqref{eq:XAX worked out} and~\eqref{eq:Va_X_Vb_X}, a short calculation using $\cos^2 \theta = I - \sin^2 \theta$ and the orthogonality of $U_1$ and $V_1$ then shows
\[
V_{\alpha}^T (X- \eta \grad f(\mathcal{X}_t)) = U_1 (\cos \theta + 2\eta M) V_1 ^T.
\]
Relating back to~\eqref{eq:V_a_X_t_plus_1}, we thus obtain
\begin{align*}
    V_{\alpha}^T {X}_{+} &= U_1 (\cos \theta + 2\eta M) V_1 ^T (I-\bigO(\eta^2)) \\
    &= U_1 (\cos \theta + 2\eta M) (I - V_1 ^T \bigO(\eta^2) V_1)V_1 ^T \\
    &= U_1 (\cos \theta + 2\eta M - \bigO(\eta^2)) V_1^T.
\end{align*}
The singular values of $V_{\alpha}^T {X}_{+}$ are therefore the same as the singular values of the matrix $\cos \theta + 2\eta M +\bigO(\eta^2)$.

By  Weyl's inequality (see, e.g., \cite[Cor.~7.3.5]{hornMatrixAnalysis2012a}), each singular value of $\cos \theta + 2\eta M +\bigO(\eta^2)$ is $\bigO(\eta^2)$ close to some singular value of $\cos \theta + 2\eta M$. Let $1 \leq j \leq k$. Denote the $j$th singular value of $\cos \theta + 2\eta M $ by $\sigma_j(\eta)$ to which we will apply Lemma~\ref{lem:perturb_sing_value}. Let $m$ be the multiplicity of $\sigma_j(0)$. Hence, there exists $j_0$ such that $\sigma_{j_0}(0) = \cdots = \sigma_{j}(0) = \cdots = \sigma_{j_0+m-1}(0)$. Since $\cos \theta$ is a diagonal matrix with decreasing diagonal, its $\ell$th singular value equals $\cos \theta_\ell$ and its associated left/right singular vector is the $\ell$th canonical vector $e_\ell$.  
Denoting
\[
 E = \begin{bmatrix} e_{j_0} & \cdots & e_{j_0+m-1} \end{bmatrix}, 
\]
observe that $\cos \theta \, E = \cos \theta_{j_0} \, E$ (here, $\cos \theta$ is a diagonal matrix and $\cos \theta_{j_0} $ is a scalar) and likewise for $\sin \theta \, E$. We thus get
\[
 E^T M E = \sin^2 \theta_{j_0} \cos \theta_{j_0} (U_1^T \Lambda_{\alpha} U_1  -  \tilde U_2^T \Lambda_{\beta} \tilde U_2 ).
\]
In the proof of Proposition~\ref{prop:weak-quasi-convexity}, we showed that the matrix in brackets above is symmetric and positive semi-definite (see~\eqref{eq:La_min_Lb_is_PSD}). Since $0 \leq \theta_{j_0} \leq \pi/2$, the eigenvalues of $E^T M E$ are therefore all non-negative. Lemma~\ref{lem:perturb_sing_value} thus gives that $\sigma_j(\eta) \geq \sigma_j$ for sufficiently small and positive $\eta $. Since the singular values of $V_\alpha^T X_+$ are the cosines of the principal angles between $\mathcal{V}_\alpha$ and $\mathcal{X}_{t+1}$ with step size $\eta \geq 0$, we conclude that there exists $\bar \eta>0$ such that for all $\eta \in [0,\bar\eta]$ it holds
\begin{equation*}
    \theta_j(\mathcal{X}_{t+1}, \mathcal{V}_{\alpha}) \leq \theta_j(\mathcal{X}_t, \mathcal{V}_{\alpha}).
\end{equation*}
Since $j$ was arbitrary, this finishes the proof.
\qed \end{proof}

\section{Numerical experiment}

We report on a small numerical experiment to verify the convergence rates proven above. The steepest descent iteration with fixed step size was implemented in \textsc{Matlab} using the geodesic formula~\eqref{eq:formula_geo}. 

As first test matrix, we took the standard 3D Laplacian on a unit cube, discretized with finite differences and zero Dirichlet boundary conditions. The size of the matrix $A$ is $n=400$. We tested a few values for the block size $k$. They are depicted in the table below, together with other parameters that are relevant for Theorem~\ref{thm:exponential_conv}.

\begin{center}
\begin{tabular}{cccc}\toprule
$k$ & $\delta$ & $\textrm{dist}(X_0, V_\alpha)$ \\ \midrule
1 & $0.0665\ldots$ & $0.113\ldots$ \\
6 & $0.0665\ldots$ & $0.280\ldots$ \\
10 & $0.0262\ldots$ & $0.350\ldots$  \\\bottomrule
\end{tabular}
\end{center}

In Figure~\ref{fig:laplacian}, the convergence of the Riemannian distance is visible in addition to the theoretical convergence rate of Theorem~\ref{thm:exponential_conv}. We see that in all cases, these bound on the convergence are valid (in particular, exponential) although they are rather conservative.

\begin{figure}
\centering
\includegraphics[width=0.9\textwidth]{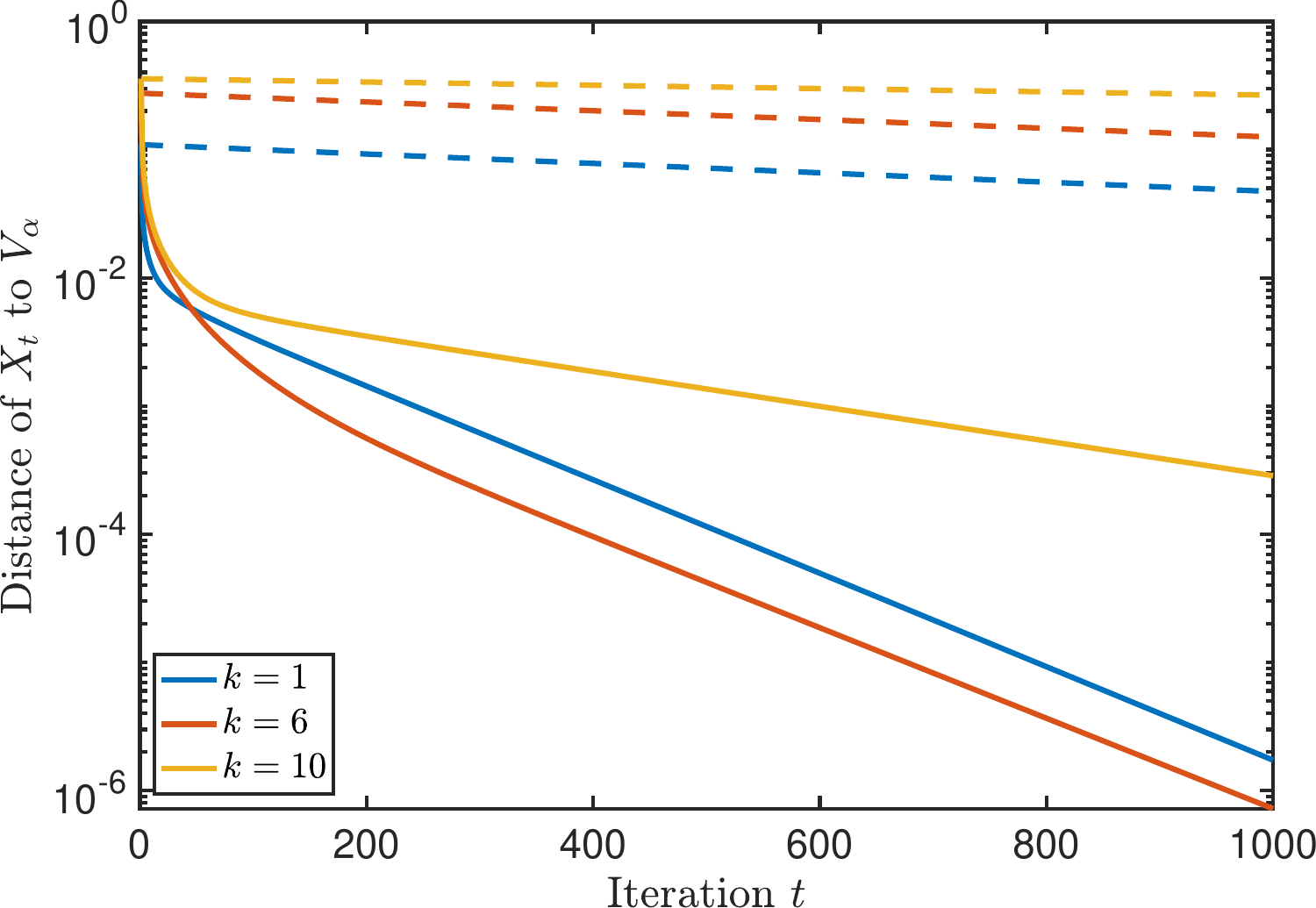}
\caption{Steepest descent along geodesics for the block Rayleigh quotient of size $k$ applied to a discretized 3D Laplacian matrix. The full lines correspond to the experimental values and the dashed lines to the theoretical upper bounds.} \label{fig:laplacian}
\end{figure}

In the second test, we investigate the convergence when the eigengap $\delta$ is small or zero. In particular, we take $A=VDV^T \in \mathbb{R}^{1000 \times 1000}$ with $V$ a random orthogonal matrix and $D$ contains the eigenvalues 
\[
 \lambda_1= 3,\  \lambda_2=2,\  \lambda_3 = 1+10^{-2}+10^{-6},  \ \lambda_4=1+10^{-6},\ \lambda_5=\lambda_6= 1.
\]
The other eigenvalues are equidistantly distributed between $0.1$ and $0.2$. The block size and other relevant parameters for the test are described below. Since the convergence for small $\delta$ slows down considerably after the first 5 iterations, we apply the bounds of Theorem~\ref{thm:convex_conv} at iteration $t=6$ (and treat this as the start with $t=0$).

\begin{center}
\begin{tabular}{ccccc}\toprule
$k$ & $\delta$ & $\textrm{dist}(X_0, V_\alpha)$ & $\textrm{dist}(X_6, V_\alpha)$\\ \midrule
2 & $0.99\ldots$ & $0.051\ldots$  & $0.001\ldots$ \\
3 & $10^{-2}$ & $0.055\ldots$  & $0.031\ldots$ \\
4 & $10^{-6}$ & $0.063\ldots$  & $0.045\ldots$ \\
5 & $0$ & $0.070\ldots$  & $0.054\ldots$  \\\bottomrule
\end{tabular}
\end{center}

The convergence in function value is visible in Figure~\ref{fig:tinygap}. Observe that we have displayed a logarithmic scale for both axes whereas before the figure had a logarithmic scale only for $y$-axis. Algebraic convergence like $1/t$ is therefore visible as a straight line. We see in the figure that the convergence is not easily described, and that there is no clear difference between zero or small gap. However, the upper bounds of Theorem~\ref{thm:convex_conv} are again valid. In addition, when the gap is not small, the convergence is clearly faster.

\begin{figure}
\centering
\includegraphics[width=0.9\textwidth]{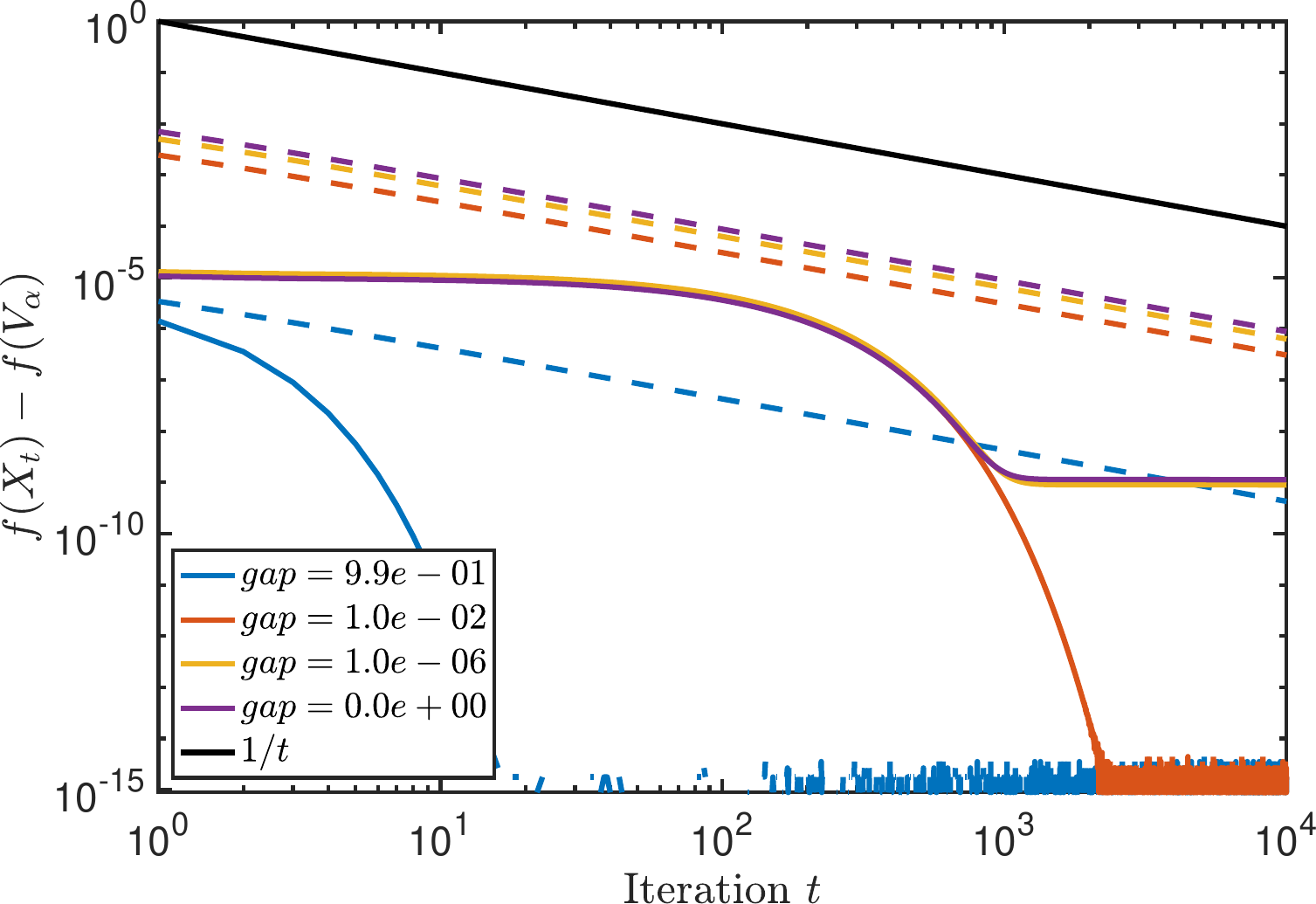}
\caption{Steepest descent along geodesics for the block Rayleigh quotient of size $k$ applied to a random matrix with small eigengaps. The full lines correspond to the experimental values and the dashed lines to the theoretical upper bounds. Each color corresponds to a certain eigengap $\delta$.} \label{fig:tinygap}
\end{figure}

\section{Conclusion and future work}

We provided the first non-asymptotic convergence rates for Riemannian steepest descent on the Grassmann manifold for computing a subspace spanned by $k$ leading eigenvectors of a symmetric matrix $A$. 

Our main idea was to exploit a convexity-like structure of the block Rayleigh quotient, which can be of much more general interest than for only analyzing steepest descent. One example is line search methods, which have usually favourable properties compared to vanilla steepest descent. Also, weakly-quasi-convex functions have been proven to admit accelerated algorithms \cite{nesterov2020primal}, while accelerated or almost accelerated Riemannian algorithms have been developed in \cite{zhang2018towards,alimisis2020continuous,alimisis2021momentum}. It would naturally be interesting to examine whether a provable accelerated method  can be developed for the block Rayleigh quotient on the Grassmann manifold. This would hopefully reduce the dependence of the iteration complexity on the eigengap $\delta$ from $\bigO(1/\delta)$ to $\bigO(1/\sqrt{\delta})$.

Another interesting direction is to extend the analysis of \cite{alimisis2021distributed} from the  computation of just one leading eigenvector to computation of a whole subspace, using the generalized machinery developed in this work, or develop a noisy version of steepest descent and compare with noisy power method \cite{hardt2014noisy}.

\bibliographystyle{plain} 
\bibliography{refs}

\appendix
\section{Geodesic convexity} \label{sec:convexity}

Let $\delta > 0$ and thus $\mathcal{V}_\alpha$ is the unique minimizer of $f$.  Define the following neighbourhood of $\mathcal{V}_\alpha$ in $\Gr(n,k)$:
\begin{equation}\label{eq:geodesic_region_N}
 N_*(\varphi) = \{ \mathcal{X} \in \Gr(n,k) \colon \theta_k(\mathcal{X}, \mathcal{V}_\alpha) < \varphi \} \qquad \text{with $\varphi \in [0, \pi/4]$}.
\end{equation}
Here, $\theta_k(\mathcal{X}, \mathcal{V}_\alpha)$ denotes the largest principal angle between $\mathcal{X}$ and $\mathcal{V}_\alpha$. Since $\theta_k$ is a metric on $\Gr(n,k)$ (see~\cite{qiuUnitarilyInvariantMetrics2005}), any two subspaces $\mathcal{X},\mathcal{Y} \in N_*(\varphi)$ will satisfy $\theta_k(\mathcal{X},\mathcal{Y}) < \pi /2$ by triangle inequality. They thus have a unique connecting geodesic. It is shown in \cite[Lemma~2]{ahn2021riemannian} that for any fixed $\varphi \in [0, \pi/4]$ this geodesic remains in $N_*(\varphi)$. Each set $N_*(\varphi)$ is thus an open totally geodesically convex set as defined in, e.g., \cite[Def.~11.16]{boumal2022intromanifolds}. 

One of the main results in \cite{ahn2021riemannian}, namely Cor.~4, states that $f$ is geodesically convex on $N_*(\pi/4)$. This is unfortunately wrong and we present a small counterexample.

\paragraph{Counterexample for Cor.~4 in \cite{ahn2021riemannian}.}

Here we use the notation of \cite{ahn2021riemannian}. The reader is encouraged to take a look there for notational purposes.

Take $c:=\cos(\pi/4) = \sqrt{2}/2$ and $0 \le \varepsilon<1$. Define the matrices
$$
X_p := \begin{pmatrix} 1 & 0 \\ 0 & 1 \\ 0 & 0  \\ 0 & 0 \end{pmatrix}, \quad
U_p := \begin{pmatrix} c & 0 \\ 0 & c \\ c & 0  \\ 0 & c \end{pmatrix}, \quad
M := U_p \begin{pmatrix} 1 & 0 \\ 0 & \varepsilon \end{pmatrix}.
$$
These matrices satisfy the conditions posed in \cite{ahn2021riemannian}:
\begin{itemize}
    \item Principal alignment: $X_p^T U_p = \begin{pmatrix} c & 0 \\ 0 & c \end{pmatrix}$.
    \item Principal angles between $X_p$ and $U_p$ are in $[0,\pi/4]$.
    \item $U = U_p$ since $Q=I$.
\end{itemize}

Now consider the following tangent vector of unit Frobenius norm:
$$
 \Delta = \begin{pmatrix} 0 & 0 \\ 0 & 0 \\ 0 & 1  \\ 0 & 0 \end{pmatrix}.
$$
It is clearly a tangent vector of $[X_p]$ since $X_p^T \Delta = 0$.
The Hessian of $f_{full}$ at $[X_p]$ in the direction of $\Delta$ satisfies (see equation (4.2) in \cite{ahn2021riemannian})
$$
\textnormal{Hess}f_{full}([X_p])[\Delta,\Delta] = -2  \Tr(M^T \Delta \Delta^T (I-X_p X_p^T) M) + \|(\Delta X_p^T + X_p\Delta ^T) M \|_F^2.
$$
Simple calculation shows that
$$
 \textnormal{Hess}f_{full}([X_p])[\Delta,\Delta] = -2 c^2 + (1+\varepsilon^2) c^2.
$$
Hence for $\varepsilon<1$, we have $\textnormal{Hess}f_{full}([X_p])[\Delta,\Delta]<0$ and the $f_{full}$ is non-convex which is in contrast with Corollary 4. \qed

\vspace{8mm}
Instead, our Theorem~\ref{eq:g-convex_domain} guarantees convexity when $\varphi$ depends on the spectral gap.
Since $f$ is smooth, the function is geodesically convex on $N_*(\varphi)$ if and only if its Riemannian Hessian is positive definite on $N_*(\varphi)$; see, e.g., \cite[Thm.~11.23]{boumal2022intromanifolds}. We will therefore compute the eigenvalues of $\Hess f$ based on its matrix representation. This requires us to first vectorize the tangent space.

From~\eqref{eq:def_TXGr}, a matrix $G$ is a tangent vector if and only if $G^TX = 0$. Hence, taking $X_\perp \in \mathbb{R}^{n \times (n-k)}$ orthonormal such that $\mathcal{X}^\perp=\myspan(X_\perp)$, we have the equivalent definition
\[
 T_X \Gr(n,k) = \{ X_\perp M \colon M \in \mathbb{R}^{(n-k) \times k} \}.
\]

The matrix $M$ above can be seen as the coordinates of $G=X_\perp M $ in the basis $X_\perp$. More specifically, by using the linear isomorphism $\vecop \colon \mathbb{R}^{n \times k} \to \mathbb{R}^{nk}$ that stacks all columns of a matrix under each other, we can define the tangent vectors of $\Gr(n,k)$ as standard (column) vectors in the following way:
\[
 \vecop(G) = \vecop(X_\perp M ) = (I_k \otimes X_\perp) \vecop(M).
\]
Here, the Kronecker product $\otimes$ appears due to \cite[Lemma 4.3.1]{hornTopicsMatrixAnalysis1991}. By well-known properties of $\otimes$  (see, e.g., \cite[Chap.~4.2]{hornTopicsMatrixAnalysis1991}), the matrix $I_k \otimes X_\perp$ has orthonormal columns. We have thus obtained an orthonormal basis for the (vectorized) tangent space. With this setup, we can now construct the Hessian. 
 
\begin{lemma}\label{lem:matrix_Hx}
Let $I_k \otimes X_\perp$ be the orthonormal basis for the vectorization of $T_{\mathcal{X}} \Gr(n,k)$. Then the Riemannian Hessian of $f$ at $\mathcal{X}$ in that basis has the symmetric matrix representation
\begin{equation}\label{eq:matrix_Hx}
 H_X = 2 (X^T A X \otimes I_{n-k} - I_k \otimes X_\perp^T A X_\perp).
\end{equation}
Furthermore, with $1 \leq i \leq k$ and $1 \leq j \leq n-k$ its $k(n-k)$ eigenvalues satisfy
\[
 \lambda_{i,j}(H_X) = 2(\lambda_i(X^T A X) - \lambda_j(X_\perp^T A X_\perp)).
\]
\end{lemma}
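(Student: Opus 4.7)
The plan is to derive the matrix $H_X$ directly from the bilinear form expression~\eqref{eq:Hessian_f_inner_product} by vectorizing tangent vectors with respect to the orthonormal basis $I_k \otimes X_\perp$, and then read off the eigenvalues using the classical spectrum of sums of Kronecker products with identities.

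First, I would write an arbitrary tangent vector at $\mathcal{X}$ as $G = X_\perp M$ for some $M \in \mathbb{R}^{(n-k)\times k}$, so that $\vecop(G) = (I_k \otimes X_\perp)\vecop(M)$. Substituting into~\eqref{eq:Hessian_f_inner_product} and using $X_\perp^T X_\perp = I_{n-k}$ together with the cyclic property of the trace gives
\begin{equation*}
\tfrac{1}{2}\Hess f(\mathcal{X})[G,G] = \Tr(M^T M\, X^T A X) - \Tr(M^T X_\perp^T A X_\perp\, M).
\end{equation*}

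Next, I would translate both traces into quadratic forms in $\vecop(M)$ using the standard identity $\Tr(M^T C M B) = \vecop(M)^T (B^T \otimes C)\vecop(M)$ (a consequence of $\vecop(CMB) = (B^T \otimes C)\vecop(M)$). Taking $C = I_{n-k}$, $B = X^T A X$ in the first term and $C = X_\perp^T A X_\perp$, $B = I_k$ in the second, and using that both $X^TAX$ and $X_\perp^T A X_\perp$ are symmetric, I obtain
\begin{equation*}
\tfrac{1}{2}\Hess f(\mathcal{X})[G,G] = \vecop(M)^T\bigl[X^T A X \otimes I_{n-k} - I_k \otimes X_\perp^T A X_\perp\bigr]\vecop(M).
\end{equation*}
Since this holds for every $M$ and the bracketed matrix is symmetric, it must equal $\tfrac{1}{2}H_X$, proving~\eqref{eq:matrix_Hx}.

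For the eigenvalues, I would invoke the standard fact about Kronecker products with identities: if $P$ and $Q$ are symmetric with spectral decompositions $P = U_P \Lambda_P U_P^T$ and $Q = U_Q \Lambda_Q U_Q^T$, then $U_P \otimes U_Q$ simultaneously diagonalizes both $P \otimes I$ and $I \otimes Q$, and the eigenvalues of $P \otimes I_{n-k} - I_k \otimes Q$ are exactly $\lambda_i(P) - \lambda_j(Q)$ for $1 \leq i \leq k$, $1 \leq j \leq n-k$. Applying this with $P = X^T A X$ and $Q = X_\perp^T A X_\perp$ yields the claimed $\lambda_{i,j}(H_X) = 2(\lambda_i(X^T A X) - \lambda_j(X_\perp^T A X_\perp))$.

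There is no real obstacle here beyond keeping the Kronecker bookkeeping straight: the only subtlety is matching the correct sides of $B \otimes C$ to the correct traces (swapping would place the spectra of $X^TAX$ and $X_\perp^TAX_\perp$ on the wrong factors) and verifying orthonormality of $I_k \otimes X_\perp$ so that the matrix representation in this basis is legitimately the same as the abstract Hessian operator.
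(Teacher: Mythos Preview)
Your proof is correct and follows essentially the same approach as the paper: both substitute $G=X_\perp M$ into~\eqref{eq:Hessian_f_inner_product}, convert the resulting expression into a quadratic form in $\vecop(M)$ via standard $\vecop$/Kronecker identities, and then read off the eigenvalues from the sum-of-Kronecker structure (the paper cites \cite[Thm.~4.4.5]{hornTopicsMatrixAnalysis1991} for the latter, which is exactly your simultaneous diagonalization argument). The only cosmetic difference is that the paper vectorizes before simplifying whereas you simplify the traces first, but the computations are equivalent.
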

\begin{proof}
Since $\vecop$ is a linear isomorphism, the symmetric matrix $H_X$ satisfies
\[
 \Hess f(X)[X_\perp M, X_\perp M] = \langle \vecop(M), H_X \vecop(M) \rangle,  \qquad \forall M \in \mathbb{R}^{n \times (n-k)},
\]
where $\langle \cdot, \cdot \rangle$ is the Euclidean inner product. Define $m = \vecop(M)$. Plugging in the formula~\eqref{eq:Hessian_f_inner_product} for $\Hess f$, we calculate
\begin{align*}
 \Hess f(X)[X_\perp M, X_\perp M] &= 2\langle X_\perp M, X_\perp M X^T A X - AX_\perp M \rangle \\
 &= 2\langle (I \otimes X_\perp) m,  (X^T A X \otimes X_\perp)  m  - (I \otimes AX_\perp) m  \rangle \\
 &= 2\langle m,  (I \otimes X_\perp)^T (X^T A X \otimes X_\perp   - I \otimes AX_\perp) m  \rangle \\
 &= 2\langle m,  (X^T A X \otimes I   - I \otimes X_\perp^T AX_\perp) m  \rangle
\end{align*}
Here, we used typical calculus rules for the Kronecker product (see, e.g., \cite[Chap.~4.2]{hornTopicsMatrixAnalysis1991}). We recognize the matrix $H_X$ directly.

The eigenvalues of~\eqref{eq:matrix_Hx} can be directly obtained using  \cite[Thm.~4.4.5]{hornTopicsMatrixAnalysis1991}.
\qed \end{proof}

Taking $X=V_\alpha$ and $X_\perp = V_\beta$, Lemma~\ref{lem:matrix_Hx} shows immediately that the minimal eigenvalue of $\Hess f(\mathcal{V}_\alpha)$ is equal to $2\delta = 2(\lambda_k - \lambda_{k+1})$. Since $\delta > 0$, $\Hess f$ will remain strictly positive definite in a neighbourhood of $\mathcal{V}_\alpha$ by continuity. To quantify this neighbourhood, we will connect $\mathcal{V}_\alpha$ to an arbitrary $\mathcal{X}$ using a geodesic and see how this influences the bounds of Lemma~\ref{lem:matrix_Hx}. This also requires connecting $\mathcal{V}_\beta$ to $\mathcal{X}^\perp$. The next lemma shows that both geodesics are closely related. Recall that $\sin(t\theta)$ and  $\cos(t\theta)$ denote diagonal matrices of size $k \times k$. For convenience, we will denote by $O$ a zero matrix whose dimensions are clear from the context and is not always square.

\begin{lemma}\label{lem:geodesic_perp}
Let $X,Y \in \mathbb{R}^{n \times k}$ be such that $X^T X = Y^T Y = I_k$ with $k \leq n/2$. Denote the principal angles between $\myspan(X)$ and $\myspan(Y)$ by $\theta_1 \leq \cdots \leq \theta_k$ and assume that $\theta_k< \pi/2$. Choose $X_\perp, Y_\perp \in \mathbb{R}^{n \times (n-k)}$ such that $X_\perp^T X_\perp = Y_\perp^T Y_\perp = I_{n-k}$ and $\myspan(X_\perp) = \myspan(X)^\perp$, $\myspan(Y_\perp) = \myspan(Y)^\perp$. Define the curves
\begin{align*}
 \gamma(t) &\colon [0,1] \to \mathbb{R}^{n \times k} , &  t &\mapsto  X V_1 \cos(t\theta) + X_\perp V_2  \begin{bmatrix} O \\ \sin(t\theta) \end{bmatrix},  \\
 \gamma_\perp(t) &\colon [0,1] \to \mathbb{R}^{n \times (n-k)} , & t &\mapsto X_\perp  V_2 \begin{bmatrix} I \\ & \cos(t\theta)   \end{bmatrix} - X V_1  \begin{bmatrix} O & \sin(t\theta) \end{bmatrix},
\end{align*}
where the orthogonal matrices $V_1,V_2$ are the same as in Lemma~\ref{lemma:CS_square_blocks}. Then $\myspan(\gamma(t))$ is the connecting geodesic on $\Gr(n,k)$ from $\myspan(X)$ to $\myspan(Y)$. Likewise, $\myspan(\gamma_\perp(t))$ is a connecting geodesic on $\Gr(n,n-k)$  from $\myspan(X_\perp)$ to $\myspan(Y_\perp)$. Furthermore, $\gamma(t)$ and $\gamma_\perp(t)$ are orthonormal matrices for all $t$.
\end{lemma}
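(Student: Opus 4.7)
The plan is to apply the exponential map formula \eqref{eq:formula_geo} after identifying the compact SVD of $\Log_{\mathcal{X}}(\mathcal{Y})$ via the CS decomposition of Lemma~\ref{lemma:CS_square_blocks}. The derivation of $\Log_{\mathcal{X}}(\mathcal{V}_\alpha)$ in the proof of Proposition~\ref{prop:weak-quasi-convexity} used only that both matrices have orthonormal columns and largest principal angle strictly less than $\pi/2$, so the same argument with $Y$ in place of $V_\alpha$ yields
\[
\Log_{\mathcal{X}}(\mathcal{Y}) = X_\perp \tilde V_2 \, \theta \, V_1^T,
\]
where $\tilde V_2 \in \mathbb{R}^{(n-k)\times k}$ collects the last $k$ columns of $V_2$. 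Since $X_\perp \tilde V_2$ has orthonormal columns and $V_1$ is orthogonal, the triple $(U,\Sigma,V)=(X_\perp \tilde V_2,\,\theta,\,V_1)$ is a compact SVD of $\Log_{\mathcal{X}}(\mathcal{Y})$.

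Substituting this SVD into \eqref{eq:formula_geo} shows that the connecting geodesic from $\mathcal{X}$ to $\mathcal{Y}$ is $t \mapsto \myspan(X V_1 \cos(t\theta) + X_\perp \tilde V_2 \sin(t\theta))$. The expression for $\gamma(t)$ in the lemma is just a rewrite using $X_\perp \tilde V_2 \sin(t\theta) = X_\perp V_2 \begin{bmatrix} O \\ \sin(t\theta) \end{bmatrix}$, which holds because the leading $m$ columns of $V_2$ are annihilated by the zero block. Orthonormality of $\gamma(t)$ then follows from a short direct calculation: the cross terms of $\gamma(t)^T \gamma(t)$ vanish thanks to $X^T X_\perp = 0$, while the diagonal terms collapse to $\cos^2(t\theta) + \sin^2(t\theta) = I_k$ by the orthogonality of $V_1$ and the orthonormality of the columns of $\tilde V_2$.

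For the complement curve $\gamma_\perp(t)$, my strategy is to verify three facts: $\gamma_\perp(t)^T \gamma_\perp(t) = I_{n-k}$ (analogous block calculation), $\gamma(t)^T \gamma_\perp(t) = 0$, and $\gamma_\perp(0) = X_\perp V_2$. The last gives $\myspan(\gamma_\perp(0)) = \mathcal{X}^\perp$ at once. For the middle fact, two of the four block contributions in $\gamma(t)^T \gamma_\perp(t)$ vanish from $X^T X_\perp = 0$, while the surviving contributions $[\,O \ \ -\cos(t\theta)\sin(t\theta)\,]$ and $[\,O \ \ \sin(t\theta)\cos(t\theta)\,]$ cancel because diagonal sines and cosines commute. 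Together the three facts force $\myspan(\gamma_\perp(t)) = \myspan(\gamma(t))^\perp$ for every $t$. Since orthogonal complementation is a global Riemannian isometry $\Gr(n,k) \to \Gr(n,n-k)$, the complement of a geodesic is a geodesic, so $\myspan(\gamma_\perp(t))$ is the connecting geodesic from $\mathcal{X}^\perp$ to $\mathcal{Y}^\perp$; evaluating at $t=1$ also yields $\myspan(\gamma_\perp(1)) = \mathcal{Y}^\perp$, completing the argument.

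The main source of friction is purely bookkeeping: matching the zero block appearing in $V_\alpha^T X_\perp$ with the correct columns of $V_2$, and ensuring that the diagonal matrix $\sin(t\theta)$ entering $\gamma(t)$ has zeros at exactly the indices of vanishing principal angles. Invoking the isometry of orthogonal complementation is what lets us bypass a second Log/Exp computation on the perp sides, which would otherwise have to manage the minus signs that appear in the perp-perp block of Lemma~\ref{lemma:CS_square_blocks}.
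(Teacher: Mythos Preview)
Your argument for $\gamma(t)$ is essentially the paper's: both compute $\Log_{\mathcal{X}}(\mathcal{Y})$ from the CS decomposition, recognise the compact SVD $(X_\perp \tilde V_2,\theta,V_1)$, and read off the geodesic from~\eqref{eq:formula_geo}. For $\gamma_\perp(t)$, however, you diverge. The paper repeats the Log/Exp calculation on the perp side: it forms $M^\perp = X X^T Y_\perp (X_\perp^T Y_\perp)^{-1}$, extracts its compact SVD from the bottom row of the CS decomposition, and applies~\eqref{eq:formula_geo} on $\Gr(n,n-k)$ with the augmented angle vector $\theta^\perp = (0,\ldots,0,\theta_1,\ldots,\theta_k)$. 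You instead verify by a short block computation that $\gamma(t)^T\gamma_\perp(t)=0$ and $\gamma_\perp(t)^T\gamma_\perp(t)=I_{n-k}$, deduce $\myspan(\gamma_\perp(t))=\myspan(\gamma(t))^\perp$ for every $t$, and then invoke that orthogonal complementation $\Gr(n,k)\to\Gr(n,n-k)$ is a Riemannian isometry, hence carries geodesics to geodesics. Your route is cleaner and sidesteps the sign and zero-padding bookkeeping in the perp-perp block; the paper's route, by contrast, is self-contained and does not rely on the (standard but unproved-in-the-paper) isometry fact. If you keep your version, it would be prudent to add one sentence justifying that complementation is an isometry, e.g.\ by noting that it preserves principal angles up to zero padding and hence the Riemannian distance.
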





\begin{proof}
Assume $\theta_1 = \cdots = \theta_r = 0$, where $r=0$ means that $\theta_1 > 0$.  Like in the proof of Prop.~\ref{prop:weak-quasi-convexity}, the CS decomposition of $X$ and $Y$ from Lemma~\ref{lemma:CS_square_blocks} can be written in terms of their principal angles $\theta_1, \ldots, \theta_k$. Since $\theta_k < \pi/2$ and $n \leq k/2$, this gives after dividing certain block matrices the relations
\begin{align*}
 Y^T X &= U_1 \, \cos(\theta) \, V_1^T,  & 
 Y^T X_\perp &= U_1 \begin{bmatrix}O_{k \times (n-2k)} & \sin(\theta) \end{bmatrix} V_2^T  \\
Y_\perp^T X &= U_2 \begin{bmatrix}O_{(n-2k) \times k} \\ \sin(\theta) \end{bmatrix} V_1^T, & Y_\perp^T X_\perp &= U_2  \begin{bmatrix}-I_{n-2k}  \\ & -\cos(\theta) \end{bmatrix} V_2^T,
\end{align*}
where  $U_1, V_1$ and $U_2,V_2$ are orthogonal matrices of size $k \times k$ and $(n-k) \times (n-k)$, resp.




Denote $\mathcal{X}=\myspan(X)$ and $\mathcal{Y}=\myspan(Y)$. By definition, the connecting geodesic $\gamma(t)$ is determined by the tangent vector $ \Log_{\mathcal{X}}(\mathcal{Y})$, which can be computed from \eqref{eq:log formula}. To this end, we first need the compact SVD of $M:= X_\perp X_\perp^T Y (X^T Y)^{-1}$. Substituting the results from above, we get (cfr.~\eqref{eq:Log_with_V2})
\[
 M 
 = X_\perp  V_2 \begin{bmatrix} O_{(n-2k) \times k} \\ \sin(\theta)  \end{bmatrix}  U_1^ T U_1  \, (\cos(\theta))^{-1}\,  V_1^T = 
 X_\perp  V_2 \begin{bmatrix} O_{(n-2k) \times k} \\ I_k \end{bmatrix} \,  \tan(\theta) \, V_1^T.
\]

Observe that this is a compact SVD. Applying \eqref{eq:log formula}, we therefore get
\[
 G := \Log_{\mathcal{X}}(\mathcal{Y}) =  U \Sigma V^T \quad \text{with $U = X_\perp V_2 \begin{bmatrix} O \\ I_k \end{bmatrix}, \ \Sigma = \theta, \ V = V_1$}
\]
and from~\eqref{eq:formula_geo}, the connecting geodesic satisfies
\[
 \Exp_{\mathcal{X}}(tG) = \myspan( \, X V_1 \cos(t\theta) + X_\perp V_2 \begin{bmatrix} O \\ I_k \end{bmatrix} \sin(t\theta) \, ).
\]
We have proven the stated formula for $\gamma(t)$. Verifying that $\gamma(t)^T \gamma(t) = I_k$ follows from a simple calculation that uses $\cos^2(t \theta) + \sin^2(t \theta) = I_k$.

Denote $\mathcal{X}^\perp=\myspan(X_\perp)$ and $\mathcal{Y}^\perp=\myspan(Y_\perp)$. To prove $\gamma_\perp(t)$, we proceed similarly by computing $G^\perp:= \Log_{\mathcal{X}^\perp}(\mathcal{Y}^\perp)$, which requires now the SVD of $M^\perp:= X X^T Y_\perp (X_\perp^T Y_\perp)^{-1}$. Again substituting the results from the CS decomposition, we get
\begin{align*}
 M^\perp &= X V_1 \begin{bmatrix}O_{k \times (n-2k)} & \sin(\theta)  \end{bmatrix} U_2^ T U_2 \begin{bmatrix} -I_{n-2k} \\ & - \cos(\theta) \end{bmatrix}^{-1}  V_2^T \\
 &= X V_1 \begin{bmatrix}O_{k \times (n-2k)} & -\tan(\theta)  \end{bmatrix} V_2^T
\end{align*}
Since \eqref{eq:log formula} requires a compact SVD with a \emph{square} $\Sigma$, we rewrite this as
\[
 M^\perp = \begin{bmatrix} \widetilde X & X V_1 \end{bmatrix} \begin{bmatrix}O_{(n-2k) \times (n-2k)} \\ & -\tan(\theta)  \end{bmatrix} V_2^T
\]
where $\widetilde X$ contains $n-2k$ columns that are orthonormal to $X$ (the final result will not depend on $\widetilde X$). Let $\theta^\perp_1 \leq \cdots \leq  \theta^\perp_{n-k}$ denote the principal angles between $\mathcal{X}^\perp$ and $\mathcal{Y}^\perp$. Up to zero angles, they are the same as those between $\mathcal{X}$ and $\mathcal{Y}$. Since $k \leq n/2$, we thus have
\[
 \theta^\perp_1 = \cdots =  \theta^\perp_{n-2k} = 0, \  \theta^\perp_{n-2k+1} = \theta_1, \ldots,   \theta^\perp_{n-k} =  \theta_k.
\]
Applying \eqref{eq:log formula} with these principal angles, we obtain
\[
 G^\perp := \Log_{\mathcal{X}^\perp}(\mathcal{Y^\perp}) =  U \Sigma V^T \quad \text{with $U = -\begin{bmatrix} \widetilde X & X V_1 \end{bmatrix} , \ \Sigma = \theta^\perp, \ V = V_2$}.
\]
From \eqref{eq:formula_geo}, the corresponding geodesic satisfies
\begin{align*}
 \Exp_{\mathcal{X}^\perp}(tG^\perp) &= \myspan( \, X_\perp   V_2 \cos(t\theta^\perp) - \begin{bmatrix} \widetilde X & X V_1 \end{bmatrix} \sin(t\theta^\perp) \, ) \\
 &= \myspan( \, X_\perp   V_2 \begin{bmatrix} I_{n-2k}  \\ & \cos(t\theta) \end{bmatrix}  - \begin{bmatrix}  O_{n \times (n-2k)} & X V_1 \sin(t\theta) \end{bmatrix}  \, ). 
\end{align*}
Rewriting the block matrix, we have proven $\gamma_\perp(t)$. Its orthonormality is again a straightforward verification.
\qed \end{proof}

With the previous lemma, we can now investigate the Riemannian Hessian of $f$ near $\mathcal{V}_\alpha$ when it is given in the matrix form $H_X$ of Lemma~\ref{lem:matrix_Hx}. Let $\mathcal{X} = \myspan(X) \in \Gr(n,k)$ with orthonormal $X$. Its principal angles with $\mathcal{V}_\alpha$ are $\theta_1 \leq \cdots \leq \theta_k < \pi/2$.
Use the substitutions $X \mapsto V_\alpha, Y \mapsto X$ and $X_\perp \mapsto V_\beta, Y_\perp \mapsto X_\perp$ in Lemma~\ref{lem:geodesic_perp} to define the geodesics $\gamma(t)$ and $\gamma_\perp(t)$ that connect $\mathcal{V}_\alpha$ to $\mathcal{X}$, and $\mathcal{V}_\beta$ to $\mathcal{X}^\perp$, resp. Denoting
\[
 C := \cos(\theta), \ S := \sin(\theta), 
 \ \widetilde C := \begin{bmatrix} I \\ & C \end{bmatrix}, 
 \ \widetilde S := \begin{bmatrix} O \\  S \end{bmatrix},
\]
we get the following expressions for the geodesics:
\[
 \gamma(t) = V_\alpha V_1 C + V_\beta V_2 \widetilde S, \quad 
 \gamma_\perp(t) = V_\beta V_2 \widetilde C - V_\alpha V_1 \widetilde S^T.
\]
Recall that $H_X$ is defined using $X^T A X$ and $X_\perp^T A X_\perp$. Since $\gamma(1) = XQ_1$ and $\gamma_\perp(1) = X^\perp Q_2$ for some orthogonal matrices $Q_1, Q_2$, we can write with $A = V_\alpha \Lambda_{\alpha} V_\alpha^T + V_\beta \Lambda_{\beta} V_\beta^T$ that

\begin{equation}\label{eq:XtAX_XptAXp_geodesic}
\begin{aligned}
 Q_1^T X^T A X Q_1 &=  \gamma(1)^T A \gamma(1) \\
 &=  C \, (V_1^T \Lambda_{\alpha} V_1) \, C  +  \widetilde S^T \, (V_2^T \Lambda_{\beta} V_2) \, \widetilde S  \\
 Q_2^TX_\perp^T A X_\perp Q_2&=  \gamma_\perp(1)^T A \gamma_\perp(1) \\
  &=  \widetilde C \, (V_2^T \Lambda_{\beta} V_2) \, \widetilde C + \widetilde S \, (V_1^T \Lambda_{\alpha} V_1) \, \widetilde S^T.
\end{aligned}
\end{equation}

Here we used simplifications like $V_\beta^T AV_\alpha = V_\beta^T V_\alpha \Lambda_{\alpha} = 0$.


A simple bounding of the eigenvalues of the difference of these matrices results in the main result.
\begin{theorem}\label{eq:g-convex_domain}
Let $k \leq n/2$. Define the neighbourhood
\[
 B_* = \left\{ \mathcal{X} \in \Gr(n,k) \colon \sin^2 (\theta_k(\mathcal{X}, \mathcal{V}_\alpha)) \leq \frac{\delta}{\lambda_1 + \lambda_k} \right\},
\]
then $f$ is geodesically convex on $B_*$.
\end{theorem}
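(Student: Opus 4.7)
My plan is to reduce the theorem to the pointwise condition $\Hess f(\mathcal{X}) \succeq 0$ on $B_*$ and then bound the extreme eigenvalues of $X^T A X$ and $X_\perp^T A X_\perp$ using the geodesic parametrizations in~\eqref{eq:XtAX_XptAXp_geodesic}. First I would verify that $B_*$ sits inside a totally geodesically convex region: since $A\succeq 0$ gives $\delta=\lambda_k-\lambda_{k+1}\le\lambda_k\le\lambda_1$, we have $\delta/(\lambda_1+\lambda_k)\le 1/2$, hence $\sin^2\theta_k\le 1/2$ and so $\theta_k\le\pi/4$ on $B_*$. Thus $B_*\subseteq N_*(\pi/4)$, which is totally geodesically convex by \cite[Lemma 2]{ahn2021riemannian}. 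Together with the standard characterization that a smooth function is geodesically convex on a TGC set iff its Riemannian Hessian is PSD throughout (\cite[Thm.~11.23]{boumal2022intromanifolds}), the task reduces to showing $\Hess f(\mathcal{X})\succeq 0$ for every $\mathcal{X}\in B_*$. Lemma~\ref{lem:matrix_Hx} identifies the eigenvalues of $\Hess f(\mathcal{X})$ as $2(\lambda_i(X^T A X)-\lambda_j(X_\perp^T A X_\perp))$, so it suffices to establish $\lambda_{\min}(X^T A X)\ge\lambda_{\max}(X_\perp^T A X_\perp)$ uniformly on $B_*$.

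Second, by~\eqref{eq:XtAX_XptAXp_geodesic} the two blocks are orthogonally similar to
\begin{align*}
A_1 &= C\,(V_1^T\Lambda_\alpha V_1)\,C + \widetilde S^T\,(V_2^T\Lambda_\beta V_2)\,\widetilde S, \\
A_2 &= \widetilde C\,(V_2^T\Lambda_\beta V_2)\,\widetilde C + \widetilde S\,(V_1^T\Lambda_\alpha V_1)\,\widetilde S^T,
\end{align*}
so it is enough to bound the extreme eigenvalues of $A_1$ and $A_2$. Using the two-sided PSD bounds $\lambda_k I\preceq V_1^T\Lambda_\alpha V_1\preceq \lambda_1 I$ and $0\preceq V_2^T\Lambda_\beta V_2\preceq \lambda_{k+1}I$ (the latter uses $A\succeq 0$ and is where positive semi-definiteness of $A$ enters), together with monotonicity of congruence by PSD factors, I would drop the nonnegative second summand in $A_1$ and derive
\[
A_1 \succeq \lambda_k C^2 \succeq \lambda_k\cos^2\theta_k\,I_k, \qquad A_2 \preceq \lambda_{k+1}\widetilde C^2 + \lambda_1\widetilde S\widetilde S^T \preceq (\lambda_{k+1}+\lambda_1\sin^2\theta_k)\,I_{n-k},
\]
where the key facts are $\widetilde C^2\preceq I_{n-k}$ and the block-diagonal form $\widetilde S\widetilde S^T=\diag(O_{n-2k},S^2)$, whose largest eigenvalue is $\sin^2\theta_k$.

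Combining these bounds yields
\[
\lambda_{\min}(X^T A X)-\lambda_{\max}(X_\perp^T A X_\perp) \ \geq\ \lambda_k\cos^2\theta_k-\lambda_{k+1}-\lambda_1\sin^2\theta_k \ =\ \delta-(\lambda_1+\lambda_k)\sin^2\theta_k,
\]
which is exactly $\geq 0$ on $B_*$. Hence $\Hess f\succeq 0$ throughout $B_*$, giving the desired geodesic convexity. I do not expect a deep obstacle here; the only delicate points are (i) the rectangular bookkeeping for $\widetilde S\in\mathbb{R}^{(n-k)\times k}$, specifically that the largest eigenvalue of $\widetilde S\widetilde S^T$ is the single quantity $\sin^2\theta_k$ rather than an aggregate of all the sines, and (ii) noting that dropping the $\lambda_n \sin^2\theta_k$ contribution in the lower bound for $A_1$ is harmless precisely because $A\succeq 0$, which is also why the stated radius is $\delta/(\lambda_1+\lambda_k)$ rather than the slightly sharper $\delta/(\lambda_1+\lambda_k-\lambda_n)$.
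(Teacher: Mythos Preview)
Your proposal is correct and follows essentially the same route as the paper: reduce geodesic convexity on $B_*$ to $\Hess f\succeq 0$ via Lemma~\ref{lem:matrix_Hx}, then bound $\lambda_{\min}(X^TAX)$ and $\lambda_{\max}(X_\perp^TAX_\perp)$ through the geodesic parametrizations~\eqref{eq:XtAX_XptAXp_geodesic} to arrive at the inequality $\lambda_k\cos^2\theta_k\ge\lambda_{k+1}+\lambda_1\sin^2\theta_k$. The only cosmetic difference is that the paper phrases the eigenvalue estimates via a generalized Ostrowski theorem, whereas you use Loewner ordering and congruence directly; the resulting bounds are identical. One small point to tighten: you argue $B_*\subseteq N_*(\pi/4)$ with $N_*(\pi/4)$ totally geodesically convex, but to invoke \cite[Thm.~11.23]{boumal2022intromanifolds} you need $B_*$ \emph{itself} to be totally geodesically convex; this follows immediately from the same \cite[Lemma~2]{ahn2021riemannian} since $B_*$ is precisely an $N_*(\varphi)$-type set with $\varphi\le\pi/4$, which is also how the paper closes the argument.
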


\begin{proof}
Our aim is to show that $\lambda_{i,j}(H_X)$ remains positive given the bound on $\theta_k$. From Lemma~\ref{lem:matrix_Hx}, we see that 
\begin{equation}\label{eq:condition_Hess_pos}
 \lambda_{\min}(H_X) \geq 0 \quad \iff \quad \lambda_{\min}(X^T A X) \geq \lambda_{\max}(X_\perp^T A X_\perp).
\end{equation}
Since $Q_1,Q_2$ are orthogonal in~\eqref{eq:XtAX_XptAXp_geodesic}, it suffices to find a lower and upper bound of, resp.,
\begin{align*}
 \lambda_{\min}(X^T A X) &= \lambda_{\min}(C \, (V_1^T \Lambda_{\alpha} V_1) \, C  +  \widetilde S^T \, (V_2^T \Lambda_{\beta} V_2) \, \widetilde S) \\
 \lambda_{\max}(X_\perp^T A X_\perp) &= \lambda_{\max}(\widetilde C \, (V_2^T \Lambda_{\beta} V_2) \, \widetilde C + \widetilde S \, (V_1^T \Lambda_{\alpha} V_1) \, \widetilde S^T).
\end{align*}
Standard eigenvalue inequalities for symmetric matrices (see, e.g., \cite[Cor.~4.3.15]{hornMatrixAnalysis2012a}) give
\begin{align*}
 \lambda_{\min}(X^T A X) &\geq \lambda_{\min}(C \, (V_1^T \Lambda_{\alpha} V_1) \, C)  +  \lambda_{\min}(\widetilde S^T \, (V_2^T \Lambda_{\beta} V_2) \, \widetilde S) \\
 \lambda_{\max}(X_\perp^T A X_\perp) &\leq \lambda_{\max}(\widetilde C \, (V_2^T \Lambda_{\beta} V_2) \, \widetilde C)  + \lambda_{\max}(\widetilde S \, (V_1^T \Lambda_{\alpha} V_1) \, \widetilde S^T).
\end{align*}
Recall that $\lambda_1 \geq \cdots \geq \lambda_n$ are the eigenvalues of $A$. 
Since $\widetilde S$ is a tall rectangular matrix, we apply the generalized version of Ostrowski's theorem from~\cite[Thm.~3.2]{highamModifyingInertiaMatrices1998} to each term above\footnote{Observe that the cited theorem orders the eigenvalues inversely to the convention used in this paper.} and obtain 
\begin{align*}
 \lambda_{\min}(C \, (V_1^T \Lambda_{\alpha} V_1) \, C) &\geq \lambda_{\min}(C^2) \lambda_{\min}(\Lambda_{\alpha}) = \cos^2(\theta_k) \lambda_k \\
 \lambda_{\min}(\widetilde S^T \, (V_2^T \Lambda_{\beta} V_2) \, \widetilde S) &\geq \lambda_{\min}(\tilde S^T \tilde S) \lambda_{\min}(\Lambda_{\beta}) = \sin^2(\theta_1) \lambda_n,
\end{align*} 
since the matrices $V_1,V_2$ are orthogonal and $\theta_1 \leq \cdots \leq \theta_k < \pi/2$. Adding this gives the lower bound
\begin{equation}\label{eq:convex_intermediate_lower_bound}
\lambda_{\min}(X^T A X) \geq \cos^2(\theta_k) \lambda_k + \sin^2(\theta_1) \lambda_n \geq \cos^2(\theta_k) \lambda_k.
\end{equation}
Likewise, using the block structure of $\widetilde S$, we get
\begin{align*} 
 \lambda_{\max}(\widetilde C \, (V_2^T \Lambda_{\beta} V_2) \, \widetilde C) &\leq \lambda_{\max}(C^2) \lambda_{\max}(\Lambda_{\beta}) = \cos^2(\theta_1) \lambda_{k+1} \\
 \lambda_{\max}(\widetilde S \, (V_1^T \Lambda_{\alpha} V_1) \, \widetilde S^T) &= 
 \lambda_{\max}(S \, (V_1^T \Lambda_{\alpha} V_1) \, S) \\
 &\leq \lambda_{\max}(S^2) \lambda_{\max}(\Lambda_{\alpha}) = \sin^2(\theta_k) \lambda_1
\end{align*}
and thus
\begin{equation}\label{eq:convex_intermediate_upper_bound}
 \lambda_{\max}(X_\perp^T A X_\perp) \leq \cos^2(\theta_1) \lambda_{k+1} +  \sin^2(\theta_k) \lambda_1 \leq  \lambda_{k+1} +  \sin^2(\theta_k) \lambda_1.
\end{equation}
The condition~\eqref{eq:condition_Hess_pos} is thus satisfied when
\[
 \cos^2(\theta_k) \lambda_k = \lambda_k - \sin^2(\theta_k) \lambda_k \geq  \lambda_{k+1} +  \sin^2(\theta_k) \lambda_1,
\]
which reduces to the bound on $\theta_k$ in the statement of the theorem.

It remains to show that $B_*$ is an open totally geodesically convex set. Since $\lambda_1 \geq \lambda_k \geq \lambda_{k+1} \geq 0$, we get
\[
\frac{\lambda_k - \lambda_{k+1}}{\lambda_1 + \lambda_{k}} \leq \frac{\lambda_k }{2\lambda_{k}} = \frac{1}{2}.
\]
Hence, $B_* = N_*(\varphi)$ with $\varphi \leq \pi/4$ since $\sin^2(\pi/4) = 1/2$.
\qed \end{proof}

If $k=1$, the proof above can be simplified.

\begin{corollary}\label{cor:g-convex_domain_sphere}
Let $k=1$ and define the neighbourhood
\[
 B_* = \left\{ \mathcal{X} \in \Gr(n,1) \colon \sin^2 (\theta_1(\mathcal{X}, \mathcal{V}_\alpha)) \leq \frac{\delta}{\delta + \lambda_1 - \lambda_n} \right\}.
\]
Then $f$ is geodesically convex on $B_*$.
\end{corollary}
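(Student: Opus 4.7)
The plan is to specialize the argument of Theorem~\ref{eq:g-convex_domain} to $k=1$, where the inequalities that were lossy in the general case become tight enough to yield the sharper bound. For $k=1$ there is only one principal angle, so $\theta_1 = \theta_k$, and we do not need to discard the $\sin^2(\theta_1)\lambda_n$ term from the lower bound~\eqref{eq:convex_intermediate_lower_bound} nor weaken $\cos^2(\theta_1)\lambda_{k+1}$ to $\lambda_{k+1}$ in the upper bound~\eqref{eq:convex_intermediate_upper_bound}. Keeping both correction terms is what replaces the denominator $\lambda_1+\lambda_k$ of Theorem~\ref{eq:g-convex_domain} by the smaller $\delta + \lambda_1-\lambda_n$ in the corollary.

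Concretely, I would reuse the matrix representation $H_X$ from Lemma~\ref{lem:matrix_Hx} together with the geodesic decomposition~\eqref{eq:XtAX_XptAXp_geodesic} verbatim for $k=1$. This directly gives
\[
\lambda_{\min}(X^T A X) \geq \cos^2(\theta_1)\,\lambda_1 + \sin^2(\theta_1)\,\lambda_n,\qquad
\lambda_{\max}(X_\perp^T A X_\perp) \leq \cos^2(\theta_1)\,\lambda_2 + \sin^2(\theta_1)\,\lambda_1,
\]
by exactly the same application of~\cite[Thm.~3.2]{highamModifyingInertiaMatrices1998} used in the proof of the theorem. Condition~\eqref{eq:condition_Hess_pos} for $\Hess f \succeq 0$ then becomes
\[
\cos^2(\theta_1)(\lambda_1 - \lambda_2) \;\geq\; \sin^2(\theta_1)(\lambda_1 - \lambda_n),
\]
and a one-line algebraic rearrangement using $\cos^2(\theta_1) = 1 - \sin^2(\theta_1)$ and $\delta = \lambda_1 - \lambda_2$ turns this into the stated inequality $\sin^2(\theta_1) \leq \delta/(\delta + \lambda_1 - \lambda_n)$ defining $B_*$.

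To finish, I would verify, as in the theorem, that $B_*$ is an open totally geodesically convex subset of $\Gr(n,1)$. Since $\lambda_2 \geq \lambda_n$, we have $\delta = \lambda_1-\lambda_2 \leq \lambda_1-\lambda_n$, so the threshold satisfies $\delta/(\delta+\lambda_1-\lambda_n) \leq 1/2 = \sin^2(\pi/4)$. Hence $B_* \subseteq N_*(\pi/4)$, and the total geodesic convexity of $B_*$ follows from that of $N_*(\pi/4)$ established in~\cite[Lemma~2]{ahn2021riemannian}.

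There is no real obstacle here beyond bookkeeping; the only step requiring minor care is making sure that, for $k=1$, the matrix $\widetilde S$ from the theorem's proof collapses to a scalar column and that the Ostrowski-type bound is still applied to the appropriate $1{\times}1$ or $(n-1){\times}(n-1)$ blocks. Everything else is a direct specialization of the theorem's proof.
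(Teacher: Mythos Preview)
Your proposal is correct and follows essentially the same route as the paper: specialize the proof of Theorem~\ref{eq:g-convex_domain} to $k=1$, keep the unsimplified versions of~\eqref{eq:convex_intermediate_lower_bound} and~\eqref{eq:convex_intermediate_upper_bound}, and rearrange the resulting inequality $\cos^2(\theta_1)\lambda_1+\sin^2(\theta_1)\lambda_n\geq\cos^2(\theta_1)\lambda_2+\sin^2(\theta_1)\lambda_1$ into the stated bound on $\sin^2(\theta_1)$. One small wording issue in your last paragraph: the inclusion $B_*\subseteq N_*(\pi/4)$ alone does not imply total geodesic convexity of $B_*$; what you want is that $B_*$ is itself of the form $N_*(\varphi)$ for some $\varphi\leq\pi/4$, and then invoke \cite[Lemma~2]{ahn2021riemannian} for that $\varphi$.
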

\begin{proof}
Since $k=1$, there is no need to simplify the bounds~\eqref{eq:convex_intermediate_lower_bound} and \eqref{eq:convex_intermediate_upper_bound} as was done above. This gives that $f$ is convex as long as
\[
 \cos^2(\theta_1) \lambda_1 + \sin^2(\theta_1) \lambda_n \geq  \cos^2(\theta_1) \lambda_{2} +  \sin^2(\theta_1) \lambda_1.
\]
Rewriting leads directly to the stated condition on  $\sin^2(\theta_1)$.
\qed \end{proof}
Remark that optimizing $f$ on $\Gr(n,1)$ is equivalent to
\begin{equation}\label{eq:min_f_sphere}
 \min_{x \in \mathbb{R}^n} - x^T A x \qquad \text{s.t.} \quad \|x\| = 1,
\end{equation}
which is the minimization of the Rayleigh quotient problem on the unit sphere $S^{n-1} = \{ x \in \mathbb{R}^n \colon x^T x = 1 \}$. 
Cor.~\ref{cor:g-convex_domain_sphere} can therefore also be phrased in terms of a geodesically convex region for this problem. Denoting a unit norm top eigenvector of $A$ by $v_1$ and using that $\sin^2 \theta_1 = 1 - \cos^2 \theta_1$, we get that~\eqref{eq:min_f_sphere} is geodesically convex on 
\[
 \hat B_* = \left\{ x \in S^{n-1} \colon (x^T v_1)^2 \geq 1 -  \frac{\delta}{\delta + \lambda_1 - \lambda_n} \right\}.
\]
This result can now be directly compared to \cite[Lemma 7]{pmlr-v119-huang20e} where the corresponding region is defined as $(x^T v_1)^2 \geq 1 -  \frac{\delta}{\delta + \lambda_1}$. This is a stricter condition and our result is therefore a small improvement.

\section{Convergence of Steepest Descent with step $\frac{1}{\gamma}$}
\label{sec:big_step}

We now prove convergence of steepest descent with a more tractable choice of step-size compared to the analysis of the main paper. However, this requires a slightly better initialization at most $\frac{\pi}{2 \sqrt{2}}$ away from the minimizer.

\subsection{Maximum extent of the iterates}
We first prove that steepest descent with step-size at most $\frac{1}{\gamma}$ does not guarantee contraction on distances from step to step, but we can still bound the distance at step $t$ with the initial distance up to a scalar:
\begin{Proposition} \label{prop:big_step_distance}
     Consider steepest descent applied to $f$ with step-size $\eta \leq \frac{1}{\gamma}$. If the iterates $\mathcal{X}_t$ satisfy $\theta_k(\mathcal{X}_t,\mathcal{V}_{\alpha})<\frac{\pi}{2}$, then they also satisfy
     \begin{equation*}
         \textnormal{dist}^2(\mathcal{X}_t, \mathcal{V}_{\alpha}) \leq 2 \textnormal{dist}^2(\mathcal{X}_0, \mathcal{V}_{\alpha}).
     \end{equation*}
\end{Proposition}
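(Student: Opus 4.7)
The plan is to mimic the derivation of Lemma~\ref{lem:GD convergence 1 step}, but instead of requiring the step size to be bounded by $a(\mathcal{X}_t)/\gamma$ (which handles the $\eta^2\|\grad f\|^2$ term pointwise), I will allow a per-step increase in squared distance and then telescope, controlling the sum of squared gradient norms via the smoothness descent lemma and converting the total back into an initial-distance bound via the upper smoothness inequality at $\mathcal{V}_\alpha$.

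First I would establish the per-step expansion bound. Since $\eta \leq 1/\gamma$ and $\|\grad f(\mathcal{X}_t)\|_2 \leq \gamma/2$ by Lemma~\ref{lem:uniform upper bound grad}, the step stays within the injectivity radius, so $\Log_{\mathcal{X}_t}(\mathcal{X}_{t+1}) = -\eta\grad f(\mathcal{X}_t)$. Assuming inductively that $\dist(\mathcal{X}_t,\mathcal{V}_\alpha)<\pi/2$ (to be verified at the end), Lemma~\ref{prop:tangent_space} gives, exactly as in the proof of Lemma~\ref{lem:GD convergence 1 step},
\begin{equation*}
\dist^2(\mathcal{X}_{t+1},\mathcal{V}_\alpha) \leq \eta^2\|\grad f(\mathcal{X}_t)\|^2 + \dist^2(\mathcal{X}_t,\mathcal{V}_\alpha) + 2\eta\sigma_t,\qquad \sigma_t := \langle \grad f(\mathcal{X}_t),\Log_{\mathcal{X}_t}(\mathcal{V}_\alpha)\rangle.
\end{equation*}
Combining Theorem~\ref{thm:weak_strong_convex} with~\eqref{eq:optim_gap_with_gradient} gives $\sigma_t \leq -\tfrac{a(\mathcal{X}_t)}{2\gamma}\|\grad f(\mathcal{X}_t)\|^2 - a(\mathcal{X}_t)c_Q\delta\dist^2(\mathcal{X}_t,\mathcal{V}_\alpha)$ (note $a(\mathcal{X}_t)>0$ because $\theta_k<\pi/2$). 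Plugging in and using $\eta^2 \leq \eta/\gamma$, the negative-definite term, and $a(\mathcal{X}_t)\leq 1$, I get
\begin{equation*}
\dist^2(\mathcal{X}_{t+1},\mathcal{V}_\alpha) - \dist^2(\mathcal{X}_t,\mathcal{V}_\alpha) \leq \frac{\eta\,(1-a(\mathcal{X}_t))}{\gamma}\|\grad f(\mathcal{X}_t)\|^2 \leq \frac{\eta}{\gamma}\|\grad f(\mathcal{X}_t)\|^2.
\end{equation*}

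Next I would control $\sum_t \|\grad f(\mathcal{X}_t)\|^2$. The smoothness inequality~\eqref{eq:quadratic_upper_bound} with $\mathcal{Y}=\mathcal{X}_t$, $\mathcal{X}=\mathcal{X}_{t+1}$ and $\eta\leq 1/\gamma$ yields the standard Euclidean-style descent estimate $f(\mathcal{X}_t)-f(\mathcal{X}_{t+1})\geq \tfrac{\eta}{2}\|\grad f(\mathcal{X}_t)\|^2$, so the function values are monotonically decreasing and telescoping gives $\sum_{t=0}^{T-1}\|\grad f(\mathcal{X}_t)\|^2 \leq \tfrac{2}{\eta}\bigl(f(\mathcal{X}_0)-f^*\bigr)$. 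Applying~\eqref{eq:quadratic_upper_bound} once more at the critical point $\mathcal{V}_\alpha$ where $\grad f(\mathcal{V}_\alpha)=0$, I obtain $f(\mathcal{X}_0)-f^* \leq \tfrac{\gamma}{2}\dist^2(\mathcal{X}_0,\mathcal{V}_\alpha)$.

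Putting everything together, telescoping the per-step inequality gives
\begin{equation*}
\dist^2(\mathcal{X}_T,\mathcal{V}_\alpha) \leq \dist^2(\mathcal{X}_0,\mathcal{V}_\alpha) + \frac{\eta}{\gamma}\sum_{t=0}^{T-1}\|\grad f(\mathcal{X}_t)\|^2 \leq \dist^2(\mathcal{X}_0,\mathcal{V}_\alpha) + \frac{2(f(\mathcal{X}_0)-f^*)}{\gamma} \leq 2\dist^2(\mathcal{X}_0,\mathcal{V}_\alpha).
\end{equation*}
The main obstacle is the interplay between the assumption $\theta_k(\mathcal{X}_t,\mathcal{V}_\alpha)<\pi/2$ (which is what makes $a(\mathcal{X}_t)>0$ and the logarithm well defined) and the stronger condition $\dist(\mathcal{X}_t,\mathcal{V}_\alpha)<\pi/2$ needed to apply Lemma~\ref{prop:tangent_space} and the smoothness inequality. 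This is handled inductively: for the bound above to be applicable at step $T$, one needs $\dist(\mathcal{X}_s,\mathcal{V}_\alpha)<\pi/2$ for $s\leq T-1$, which follows from the inductive hypothesis $\dist^2(\mathcal{X}_s,\mathcal{V}_\alpha)\leq 2\dist^2(\mathcal{X}_0,\mathcal{V}_\alpha)$ provided $\dist(\mathcal{X}_0,\mathcal{V}_\alpha)\leq \pi/(2\sqrt{2})$, exactly the slightly stronger initialization hypothesis promised in the introduction to this appendix.
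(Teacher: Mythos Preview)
Your argument is correct and, once unpacked, is essentially the paper's proof rewritten as an explicit telescoping instead of a Lyapunov-function argument. The paper defines $\mathcal{E}(t)=\tfrac{1}{\gamma}(f(\mathcal{X}_t)-f^*)+\tfrac{1}{2}\dist^2(\mathcal{X}_t,\mathcal{V}_\alpha)$ and shows $\mathcal{E}(t+1)\leq\mathcal{E}(t)$; your per-step bound $\dist^2_{t+1}-\dist^2_t\leq\tfrac{\eta}{\gamma}\|\grad f(\mathcal{X}_t)\|^2$ combined with the descent lemma $\tfrac{\eta}{2}\|\grad f(\mathcal{X}_t)\|^2\leq f_t-f_{t+1}$ is exactly the inequality $\mathcal{E}(t+1)\leq\mathcal{E}(t)$ after rearrangement, and both finish with $f(\mathcal{X}_0)-f^*\leq\tfrac{\gamma}{2}\dist^2(\mathcal{X}_0,\mathcal{V}_\alpha)$.

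One small simplification worth noting: you invoke Theorem~\ref{thm:weak_strong_convex} together with~\eqref{eq:optim_gap_with_gradient} to obtain $\sigma_t\leq -\tfrac{a(\mathcal{X}_t)}{2\gamma}\|\grad f(\mathcal{X}_t)\|^2-a(\mathcal{X}_t)c_Q\delta\dist^2$, and then throw away almost all of it. The paper instead uses only the sign $\sigma_t\leq 0$, which follows directly from weak-quasi-convexity (Proposition~\ref{prop:weak-quasi-convexity}) and $f(\mathcal{X}_t)\geq f^*$. With that, $\dist^2_{t+1}-\dist^2_t\leq\eta^2\|\grad f(\mathcal{X}_t)\|^2\leq\tfrac{\eta}{\gamma}\|\grad f(\mathcal{X}_t)\|^2$ immediately, without passing through $a(\mathcal{X}_t)$ at all. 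This does not affect correctness, but it streamlines the argument and makes the $\delta=0$ case manifestly covered. Your inductive handling of the $\dist<\pi/2$ hypothesis via the $\pi/(2\sqrt{2})$ initialization matches how the appendix intends the proposition to be used.
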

\begin{proof}
Consider the discrete Lyapunov function
\begin{equation*}
    \mathcal{E}(t)= \frac{1}{\gamma} (f(\mathcal{X}_t)-f^*)+\frac{1}{2} \textnormal{dist}^2(\mathcal{X}_t, \mathcal{V}_{\alpha}).
\end{equation*}
Then
\begin{equation*}
    \mathcal{E}(t+1)-\mathcal{E}(t) = \frac{1}{\gamma}(f(\mathcal{X}_{t+1})-f(\mathcal{X}_t))+\frac{1}{2} ( \textnormal{dist}^2(\mathcal{X}_{t+1},\mathcal{V}_{\alpha})-\textnormal{dist}^2(\mathcal{X}_t,\mathcal{V}_{\alpha}) ).
\end{equation*}
By $\gamma$-smoothness of $f$, we have
\begin{equation*}
    f(\mathcal{X}_{t+1})-f(\mathcal{X}_t) \leq \langle \textnormal{grad}f(\mathcal{X}_t),\textnormal{Log}_{\mathcal{X}_t}(\mathcal{X}_{t+1}) \rangle +\frac{\gamma}{2} \textnormal{dist}(\mathcal{X}_t,\mathcal{X}_{t+1})^2= \left( -\eta +\frac{\gamma}{2} \eta^2 \right) \|\textnormal{grad}f(\mathcal{X}_t) \| ^2.
 \end{equation*}
 We also know by Proposition \ref{prop:weak-quasi-convexity} that
 \begin{equation*}
     \langle \textnormal{grad}f(\mathcal{X}), -\Log_{\mathcal{X}}(\mathcal{V_{\alpha})} \rangle \geq 0,
 \end{equation*}
 for any $\mathcal{X}$ with $\theta_k(\mathcal{X},\mathcal{V}_\alpha) < \pi/2$. \newline
 By the fact that the sectional curvatures of the Grassmann manifold are non-negative, we have
 \begin{align*}
     \textnormal{dist}^2(\mathcal{X}_{t+1},\mathcal{V}_{\alpha}) & \leq \textnormal{dist}^2(\mathcal{X}_t,\mathcal{V}_{\alpha}) + \textnormal{dist}^2(\mathcal{X}_{t+1},\mathcal{X}_t) - 2 \langle \Log_{\mathcal{X}_{t}}(\mathcal{X}_{t+1}), \Log_{\mathcal{X}_t}(\mathcal{V_{\alpha})} \rangle \\ & = \textnormal{dist}^2(\mathcal{X}_t,\mathcal{V}_{\alpha}) + \eta^2 \| \textnormal{grad}f(\mathcal{X}_t) \|^2 + 2 \eta \langle \textnormal{grad}f(\mathcal{X}_t) , \Log_{\mathcal{X}_t}(\mathcal{V_{\alpha})} \rangle \\ & \leq \textnormal{dist}^2(\mathcal{X}_t,\mathcal{V}_{\alpha}) + \eta^2 \| \textnormal{grad}f(\mathcal{X}_t) \|^2.
 \end{align*}
 Thus
 \begin{align*}
     \mathcal{E}(t+1)-\mathcal{E}(t) \leq \left(-\frac{\eta}{\gamma}+\frac{\eta^2}{2} \right)\| \textnormal{grad}f(\mathcal{X}_t) \| ^2+ \frac{\eta^2}{2}\| \textnormal{grad}f(\mathcal{X}_t) \|^2 \leq \left(-\frac{\eta}{\gamma}+\eta^2 \right) \| \textnormal{grad}f(\mathcal{X}_t) \|^2 \leq 0,
 \end{align*}
 because $\eta \leq \frac{1}{\gamma}$. \newline
 Since $\mathcal{E}(t)$ does not increase, we have
 \begin{align*}
     \frac{1}{2} \textnormal{dist}^2(\mathcal{X}_t,\mathcal{V}_{\alpha}) & \leq \mathcal{E}(t) \leq \mathcal{E}(0) =  \frac{1}{\gamma} (f(\mathcal{X}_0)-f^*)+\frac{1}{2} \textnormal{dist}^2(\mathcal{X}_0, \mathcal{V}_{\alpha}) \\ & \leq \frac{1}{2} \textnormal{dist}^2(\mathcal{X}_0, \mathcal{V}_{\alpha}) + \frac{1}{2} \textnormal{dist}^2(\mathcal{X}_0, \mathcal{V}_{\alpha})= \textnormal{dist}^2(\mathcal{X}_0, \mathcal{V}_{\alpha})
 \end{align*}
 and the desired result follows.
\qed \end{proof}

\subsection{Convergence under positive eigengap}
When $\delta>0$, we can use gradient dominance to prove convergence of steepest descent to the (unique) minimizer in terms of function values:
\begin{Proposition}
Steepest descent with step-size $\eta = \frac{1}{\gamma}$ initialized at $\mathcal{X}_0$ such that
\begin{equation*}
    \textnormal{dist}(\mathcal{X}_0,\mathcal{V}_{\alpha}) \leq \frac{\pi}{4}
\end{equation*}
satisfies
\begin{equation*}
    f(\mathcal{X}_t)-f^* \leq \left(1-0.32 c_Q \frac{\delta}{\gamma} \right)^t (f(\mathcal{X}_0)-f^*).
\end{equation*}
\end{Proposition}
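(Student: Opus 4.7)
The plan is to combine three ingredients already available in the paper: (i) Proposition \ref{prop:big_step_distance}, which bounds how far iterates can drift from $\mathcal{V}_\alpha$ when the step size is at most $1/\gamma$; (ii) the resulting uniform lower bound on the weak-quasi-convexity constant $a(\mathcal{X}_t)$; and (iii) gradient dominance (Proposition \ref{prop:PL}) together with $\gamma$-smoothness to turn this into a geometric contraction in function value. The stricter initialization $\textnormal{dist}(\mathcal{X}_0,\mathcal{V}_{\alpha}) \leq \pi/4$ is exactly what is needed so that the possible doubling of the squared distance allowed by Proposition~\ref{prop:big_step_distance} still keeps the iterates inside the injectivity region.

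First, I would establish by induction on $t$ that $\theta_k(\mathcal{X}_t,\mathcal{V}_\alpha) < \pi/2$ for every $t \geq 0$. The base case uses $\theta_k(\mathcal{X}_0,\mathcal{V}_\alpha) \leq \textnormal{dist}(\mathcal{X}_0,\mathcal{V}_\alpha) \leq \pi/4$. For the inductive step, assuming the principal-angle condition up to time $t$, Proposition \ref{prop:big_step_distance} gives $\textnormal{dist}^2(\mathcal{X}_{t+1},\mathcal{V}_\alpha) \leq 2\,\textnormal{dist}^2(\mathcal{X}_0,\mathcal{V}_\alpha) \leq \pi^2/8$, hence $\theta_k(\mathcal{X}_{t+1},\mathcal{V}_\alpha) \leq \textnormal{dist}(\mathcal{X}_{t+1},\mathcal{V}_\alpha) \leq \pi/(2\sqrt{2}) < \pi/2$.

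Second, since $\theta \mapsto \theta/\tan\theta$ is decreasing on $[0,\pi/2)$, the bound $\theta_k(\mathcal{X}_t,\mathcal{V}_\alpha) \leq \pi/(2\sqrt{2})$ yields a uniform lower bound
\[
 a(\mathcal{X}_t) \geq a_* := \frac{\pi/(2\sqrt{2})}{\tan(\pi/(2\sqrt{2}))} \geq \cos(\pi/(2\sqrt{2})),
\]
independent of $t$. Gradient dominance (Proposition \ref{prop:PL}) then gives $\|\grad f(\mathcal{X}_t)\|^2 \geq 4 c_Q \delta\, a_*^2\, (f(\mathcal{X}_t)-f^*)$. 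Combined with the standard descent inequality that follows from $\gamma$-smoothness \eqref{eq:quadratic_upper_bound} applied along the geodesic $\mathcal{X}_t \to \mathcal{X}_{t+1}$ with step $\eta=1/\gamma$, namely $f(\mathcal{X}_{t+1}) - f(\mathcal{X}_t) \leq -\tfrac{1}{2\gamma}\|\grad f(\mathcal{X}_t)\|^2$, one obtains the one-step contraction
\[
 f(\mathcal{X}_{t+1}) - f^* \leq \left(1 - \tfrac{2 c_Q \delta a_*^2}{\gamma}\right)(f(\mathcal{X}_t)-f^*).
\]
A straightforward induction gives the claimed geometric rate, with the numerical constant $0.32$ arising from a convenient lower bound on $2 a_*^2$.

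The main obstacle, and the reason this proof is not completely routine, is the coupling between the step-size admissibility and the weak-quasi-convexity constant: the contraction rate depends on $a(\mathcal{X}_t)$, which depends on $\theta_k(\mathcal{X}_t,\mathcal{V}_\alpha)$, which in turn depends on the trajectory of the iterates. Proposition \ref{prop:big_step_distance} resolves this by providing a non-increasing Lyapunov function that controls the iterates a priori, but only up to a factor $\sqrt{2}$ in distance, which is precisely why we must tighten the initialization radius from $\pi/2$ to $\pi/(2\sqrt{2}) \cdot \sqrt{2} = \pi/4$.
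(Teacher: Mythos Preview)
Your proposal is correct and follows essentially the same route as the paper: both invoke Proposition~\ref{prop:big_step_distance} inside an induction to keep $\theta_k(\mathcal{X}_t,\mathcal{V}_\alpha)<\pi/2$ and obtain the uniform bound $a(\mathcal{X}_t)\geq\cos(\sqrt{2}\,\pi/4)\geq 0.4$, then combine the $\gamma$-smoothness descent step $f(\mathcal{X}_{t+1})-f(\mathcal{X}_t)\leq -\tfrac{1}{2\gamma}\|\grad f(\mathcal{X}_t)\|^2$ with gradient dominance (Proposition~\ref{prop:PL}) to get the one-step contraction, with $0.32=2\cdot 0.4^2$. The only difference is expository: you spell out the induction for the principal-angle condition more explicitly than the paper does.
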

\begin{proof}
By the previous result and an induction argument to guarantee that the biggest angle between $\mathcal{X}_t$ and $\mathcal{V}_{\alpha}$ stays strictly less than $\pi/2$, we can bound the quantities $a(\mathcal{X}_t)$ uniformly from below:
\newline Since $\textnormal{dist}(\mathcal{X}_t,\mathcal{V}_{\alpha}) \leq \sqrt{2} \cdot \textnormal{dist}(\mathcal{X}_0,\mathcal{V}_{\alpha}) \leq \frac{\sqrt{2} \pi}{4}$, we have
\begin{equation*}
    a(\mathcal{X}_t) \geq \cos(\theta_k(\mathcal{X}_t,\mathcal{V}_{\alpha})) \geq \cos(\textnormal{dist}(\mathcal{X}_t,\mathcal{V}_{\alpha})) \geq \cos\left(\frac{\sqrt{2} \pi}{4} \right) \geq 0.4.
\end{equation*}
By $\gamma$-smoothness of $f$, we have
\begin{equation*}
f(\mathcal{X}_{t+1})-f(\mathcal{X}_t) \leq -\frac{\| \textnormal{grad}f(\mathcal{X}_t) \| ^2}{2 \gamma}
\end{equation*}
and applying gradient dominance (Proposition \ref{prop:PL}), we get the bound
\begin{equation*}
    f(\mathcal{X}_{t+1})-f(\mathcal{X}_t) \leq -\frac{2 c_Q \delta a^2(\mathcal{X}_t)}{\gamma}(f(\mathcal{X}_t)-f^*)
\end{equation*}
thus
\begin{equation*}
    f(\mathcal{X}_{t+1})-f^* \leq \left(1-2 c_Q a^2(\mathcal{X}_t ) \frac{\delta}{\gamma}\right ) (f(\mathcal{X}_t)-f^*) \leq  \left(1-0.32 c_Q \frac{\delta}{\gamma} \right) (f(\mathcal{X}_t)-f^*).
\end{equation*}
By induction the desired result follows.
\qed \end{proof}
We now state the iteration complexity of steepest descent algorithm:
\begin{theorem}
Steepest descent with step-size $\frac{1}{\gamma}$ starting from a subspace $\mathcal{X}_0$ with distance at most $\frac{\pi}{4}$ from $\mathcal{V}_{\alpha}$ computes an estimate $\mathcal{X}_T$ of $\mathcal{V}_{\alpha}$ such that $\textnormal{dist}(\mathcal{X}_T,\mathcal{V}_{\alpha})\leq \epsilon$  in at most 
\begin{equation*}
    T=\bigO\left( \frac{\gamma}{\delta} \log \frac{f(\mathcal{X}_0)-f^*}{\delta \epsilon} \right).
\end{equation*}
\end{theorem}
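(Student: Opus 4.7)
The proof is essentially a direct combination of two tools already developed. First, the preceding proposition gives an exponential decay of the optimality gap,
\[
f(\mathcal{X}_t) - f^* \leq \left(1 - 0.32\, c_Q\, \tfrac{\delta}{\gamma}\right)^t (f(\mathcal{X}_0) - f^*),
\]
which holds under the initialization condition $\dist(\mathcal{X}_0,\mathcal{V}_\alpha) \leq \pi/4$. Second, quadratic growth (Proposition~\ref{prop:quadratic growth}) converts function value gaps into distance gaps via $\dist^2(\mathcal{X}, \mathcal{V}_\alpha) \leq (f(\mathcal{X})-f^*)/(c_Q \delta)$, provided $\theta_k(\mathcal{X}, \mathcal{V}_\alpha) < \pi/2$.

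My plan is the following. First I would verify that the quadratic growth hypothesis is valid along the iteration: by Proposition~\ref{prop:big_step_distance} we have $\dist(\mathcal{X}_t,\mathcal{V}_\alpha) \leq \sqrt{2}\,\dist(\mathcal{X}_0,\mathcal{V}_\alpha) \leq \pi\sqrt{2}/4 < \pi/2$, and since $\theta_k \leq \dist$, the principal angle condition holds at every iterate. Applying quadratic growth at $\mathcal{X}_T$ and chaining with the exponential decay yields
\[
c_Q \delta \,\dist^2(\mathcal{X}_T,\mathcal{V}_\alpha) \leq f(\mathcal{X}_T) - f^* \leq \left(1 - 0.32\, c_Q\, \tfrac{\delta}{\gamma}\right)^T (f(\mathcal{X}_0) - f^*).
\]

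To guarantee $\dist(\mathcal{X}_T,\mathcal{V}_\alpha) \leq \varepsilon$, it then suffices to pick $T$ so that the right-hand side is at most $c_Q \delta \varepsilon^2$. Taking logarithms and using the standard estimate $-\log(1-x) \geq x$ for $x \in (0,1)$, this is achieved whenever
\[
T \cdot 0.32\, c_Q\, \frac{\delta}{\gamma} \geq \log\!\left(\frac{f(\mathcal{X}_0) - f^*}{c_Q \delta \varepsilon^2}\right),
\]
which gives $T = \bigO\!\left(\frac{\gamma}{\delta}\log\frac{f(\mathcal{X}_0)-f^*}{\delta \varepsilon^2}\right)$. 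Since $\log(1/\varepsilon^2) = 2\log(1/\varepsilon)$, the factor of two can be absorbed in the $\bigO$, recovering the stated complexity $\bigO\!\left(\frac{\gamma}{\delta}\log\frac{f(\mathcal{X}_0)-f^*}{\delta \varepsilon}\right)$.

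There is no significant obstacle here: the entire argument is a two-line combination of previously established facts. The only subtle point worth verifying carefully is that the hypotheses of both the exponential function-value decay and of quadratic growth remain valid throughout the iteration, which is precisely what Proposition~\ref{prop:big_step_distance} and the initialization bound $\dist(\mathcal{X}_0,\mathcal{V}_\alpha) \leq \pi/4$ jointly ensure.
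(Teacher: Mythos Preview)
Your proposal is correct and follows essentially the same approach as the paper: combine the exponential decay of $f(\mathcal{X}_t)-f^*$ from the preceding proposition with quadratic growth to obtain a sufficient condition on $T$, then take logarithms. You are slightly more explicit than the paper in verifying that the principal-angle hypothesis of quadratic growth holds along the iteration (via Proposition~\ref{prop:big_step_distance}), which is a welcome clarification but not a different argument.
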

\begin{proof}
For $\textnormal{dist}(\mathcal{X}_T,\mathcal{V}_{\alpha})< \epsilon$, it suffices to have
\begin{equation*}
    f(\mathcal{X}_T)-f^* \leq c_Q \epsilon^2 \delta
\end{equation*}
by quadratic growth of $f$ in Proposition \ref{prop:quadratic growth}. Using $(1-c)^T \leq \exp(-c T)$ for all $T \geq 0$ and $0 \leq c \leq 1$, the previous result gives that it suffices to choose $T$ as the smallest integer such that 
\begin{equation*}
    f(\mathcal{X}_T)-f^* \leq \exp\left(- 0.32 c_Q \frac{\delta}{\gamma} T \right) (f(\mathcal{X}_0)-f^*) \leq c_Q \epsilon^2 \delta.
\end{equation*}
Solving for $T$ and substituting $c_Q = 4/\pi^2$, we get the required statement.
\qed \end{proof}

\subsection{Gap-less result}

We also prove a convergence result for the function values when $\delta$ is assumed to be $0$:

\begin{theorem}
    Steepest descent with step-size $\eta=\frac{1}{\gamma}$
    initialized at $\mathcal{X}_0$ such that
\begin{equation*}
    \textnormal{dist}(\mathcal{X}_0,\mathcal{V}_{\alpha}) \leq \frac{\pi}{4}
\end{equation*}
satisfies
\begin{equation*}
    f(\mathcal{X}_t)-f^* \leq \frac{f(\mathcal{X}_0)-f^*+\frac{\gamma}{2}\textnormal{dist}^2(\mathcal{X}_0, \mathcal{V}_{\alpha})}{0.4 t + 1} = \bigO\left(\frac{1}{t}\right). 
\end{equation*}
\end{theorem}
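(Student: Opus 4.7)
The plan is to follow the same three-ingredient recipe used in Theorem~\ref{thm:convex_conv}---smoothness descent, non-negative-curvature law of cosines, and weak-quasi-convexity---but to combine the inequalities in the specific linear combination that makes the gradient-norm contributions cancel, so that we obtain a clean per-step decrease for the Lyapunov function $\mathcal{E}(t) := \tfrac{1}{\gamma}\Delta_t + \tfrac{1}{2}\dist^2(\mathcal{X}_t, \mathcal{V}_\alpha)$ already introduced in the proof of Proposition~\ref{prop:big_step_distance}. Throughout, I write $\Delta_t := f(\mathcal{X}_t) - f^*$.

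First I would verify by induction on $t$ that $\theta_k(\mathcal{X}_t, \mathcal{V}_\alpha) < \pi/2$ and $\dist(\mathcal{X}_t, \mathcal{V}_\alpha) < \pi/2$ for all $t \geq 0$, so that weak-quasi-convexity applies at each iterate and $\mathcal{V}_\alpha$ always lies in the injectivity domain of $\Exp$ at $\mathcal{X}_t$. The base case is immediate from $\dist(\mathcal{X}_0, \mathcal{V}_\alpha) \leq \pi/4$. The inductive step uses Proposition~\ref{prop:big_step_distance}, whose hypothesis is provided by the previous step and which propagates the bound $\dist(\mathcal{X}_t, \mathcal{V}_\alpha) \leq \sqrt{2}\,\dist(\mathcal{X}_0, \mathcal{V}_\alpha) \leq \pi\sqrt{2}/4 < \pi/2$. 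Together with $\theta_k \leq \dist$, this yields a uniform lower bound on the weak-quasi-convexity constant: $a(\mathcal{X}_t) = \theta_k/\tan\theta_k \geq \cos \theta_k \geq \cos(\pi\sqrt{2}/4) \geq c_0$ for an absolute constant $c_0 > 0$ that can be taken as in the earlier big-step results of this appendix.

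Next, I would derive the one-step bound $\mathcal{E}(t+1) - \mathcal{E}(t) \leq -\tfrac{2a(\mathcal{X}_t)}{\gamma}\Delta_t$. Smoothness applied with $\eta = 1/\gamma$ via~\eqref{eq:quadratic_upper_bound} yields the descent inequality $\Delta_{t+1} - \Delta_t \leq -\tfrac{1}{2\gamma}\|\grad f(\mathcal{X}_t)\|^2$. The law of cosines of Lemma~\ref{lem:geo_triangle_nonneg}, applied to $\mathcal{X}_t, \mathcal{X}_{t+1}, \mathcal{V}_\alpha$ (legitimate by the previous step, using $\Log_{\mathcal{X}_t}(\mathcal{X}_{t+1}) = -\tfrac{1}{\gamma}\grad f(\mathcal{X}_t)$), gives
\[
\dist^2(\mathcal{X}_{t+1}, \mathcal{V}_\alpha) - \dist^2(\mathcal{X}_t, \mathcal{V}_\alpha) \leq \tfrac{1}{\gamma^2}\|\grad f(\mathcal{X}_t)\|^2 - \tfrac{2}{\gamma}\langle \grad f(\mathcal{X}_t), -\Log_{\mathcal{X}_t}(\mathcal{V}_\alpha) \rangle.
\]
Multiplying the descent bound by $1/\gamma$ and adding $1/2$ times the above, the $\|\grad f(\mathcal{X}_t)\|^2$ contributions cancel exactly, leaving $\mathcal{E}(t+1) - \mathcal{E}(t) \leq -\tfrac{1}{\gamma}\langle \grad f(\mathcal{X}_t), -\Log_{\mathcal{X}_t}(\mathcal{V}_\alpha)\rangle$; Proposition~\ref{prop:weak-quasi-convexity} then bounds this by $-\tfrac{2a(\mathcal{X}_t)}{\gamma}\Delta_t \leq -\tfrac{2c_0}{\gamma}\Delta_t$.

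Finally I would telescope from $s = 0$ to $s = t-1$ to obtain $\tfrac{2c_0}{\gamma}\sum_{s=0}^{t-1}\Delta_s \leq \mathcal{E}(0) - \mathcal{E}(t) \leq \mathcal{E}(0) - \tfrac{1}{\gamma}\Delta_t$; using the monotonic descent $\Delta_s \geq \Delta_t$ for $s \leq t$ to lower-bound the sum by $t\,\Delta_t$ and rearranging yields $(2c_0\,t + 1)\,\Delta_t \leq \gamma\mathcal{E}(0) = \Delta_0 + \tfrac{\gamma}{2}\dist^2(\mathcal{X}_0, \mathcal{V}_\alpha)$, giving the stated $\bigO(1/t)$ bound with the numerator as claimed. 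The main obstacle is the first step: Proposition~\ref{prop:big_step_distance} only guarantees the distance bound conditionally on the angle hypothesis at every iterate, so a careful induction is required to keep the iteration in the injectivity domain and the weak-quasi-convexity constant uniformly away from zero. Once this is in place, the crux of the argument is the choice of the Lyapunov function $\mathcal{E} = \Delta/\gamma + \dist^2/2$, whose weights are exactly those for which the smoothness descent and the law of cosines together annihilate the $\|\grad f\|^2$ term and leave only a weak-quasi-convex inner product on the right-hand side.
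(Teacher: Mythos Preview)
Your proof is correct and slightly cleaner than the paper's. Both arguments rest on the same three ingredients---smoothness descent, the non-negative-curvature law of cosines, and weak-quasi-convexity with the uniform constant coming from Proposition~\ref{prop:big_step_distance}---but they organize them differently. The paper uses a \emph{time-dependent} Lyapunov function $\mathcal{E}(t) = \tfrac{C_0 t + 1}{\gamma}\Delta_t + \tfrac{1}{2}\dist^2(\mathcal{X}_t,\mathcal{V}_\alpha)$ and shows directly that $\mathcal{E}(t+1)\le \mathcal{E}(t)$, so the $O(1/t)$ rate is read off from $\mathcal{E}(t)\le \mathcal{E}(0)$. You instead keep the \emph{time-independent} potential $\mathcal{E}(t)=\tfrac{1}{\gamma}\Delta_t+\tfrac{1}{2}\dist^2(\mathcal{X}_t,\mathcal{V}_\alpha)$ from Proposition~\ref{prop:big_step_distance}, prove the quantitative decrease $\mathcal{E}(t+1)-\mathcal{E}(t)\le -\tfrac{2c_0}{\gamma}\Delta_t$, and then telescope and use monotonicity of $\Delta_t$. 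Your route avoids introducing a second Lyapunov function and makes the cancellation of the $\|\grad f\|^2$ terms completely transparent; it also yields the denominator $2c_0\,t+1$ with $c_0\ge 0.4$, i.e.\ at least $0.8\,t+1$, which is tighter than the paper's $0.4\,t+1$. The paper's approach, on the other hand, packages the whole rate into a single monotone quantity, which can be convenient if one wants to combine it with further estimates.
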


\begin{proof}
By Proposition \ref{prop:big_step_distance}, we have that $\textnormal{dist}(X_t,\mathcal{V}_{\alpha}) \leq \frac{\sqrt{2} \pi}{4}$ and $f$ satisfies the weak-quasi-convexity inequality at any iterate $\mathcal{X}_t$ of steepest descent with constant $C_0:=0.4$.

Consider the discrete Lyapunov function
\begin{equation*}
    \mathcal{E}(t)= \frac{C_0t+1}{\gamma} (f(\mathcal{X}_t)-f^*)+\frac{1}{2} \textnormal{dist}^2(\mathcal{X}_t, \mathcal{V}_{\alpha}).
\end{equation*}
We have that
\begin{align*}
    \mathcal{E}(t+1)-\mathcal{E}(t) = & \frac{C_0t+C_0+1}{\gamma}(f(\mathcal{X}_{t+1})-f^*)-\frac{C_0t+1}{\gamma} (f(\mathcal{X}_t)-f^*) 
    \\ & +\frac{1}{2}(\textnormal{dist}^2(\mathcal{X}_{t+1},\mathcal{V}_{\alpha})-\textnormal{dist}^2(\mathcal{X}_t,\mathcal{V}_{\alpha})).
\end{align*}
 Now we have to estimate a bound for $\textnormal{dist}^2(\mathcal{X}_{t+1},\mathcal{V}_{\alpha})-\textnormal{dist}^2(\mathcal{X}_t,\mathcal{V}_{\alpha})$.
 By $\gamma$-smoothness of $f$ and denoting 
 $\Delta_t=f(\mathcal{X}_t)-f^*$ we have
 \begin{equation*}
    \Delta_{t+1}-\Delta_t \leq \langle \textnormal{grad}f(\mathcal{X}_t),\textnormal{Log}_{\mathcal{X}_t}(\mathcal{X}_{t+1}) \rangle +\frac{\gamma}{2} \textnormal{dist}^2(\mathcal{X}_t,\mathcal{X}_{t+1})= -\frac{\|\textnormal{grad}f(\mathcal{X}_t) \|^2}{2\gamma}
 \end{equation*}
 By $C_0$-weak-strong-convexity of $f$ and the fact that the Grassmann manifold is of positive curvature, we have
 \begin{equation*}
     C_0\Delta_t \leq \frac{\gamma}{2}(\textnormal{dist}^2(\mathcal{X}_t,\mathcal{V}_{\alpha})-\textnormal{dist}^2(\mathcal{X}_{t+1},\mathcal{V}_{\alpha}))+\frac{ \| \textnormal{grad}f(\mathcal{X}_t) \|^2}{2\gamma}
 \end{equation*}
 
 Summing this to the previous inequality, we get
 \begin{equation*}
    \textnormal{dist}^2(\mathcal{X}_{t+1},\mathcal{V}_{\alpha})-\textnormal{dist}^2(\mathcal{X}_t,\mathcal{V}_{\alpha}) \leq \frac{2}{\gamma} ((1-C_0)(f(\mathcal{X}_t)-f(\mathcal{X}_{t+1}))-C_0(f(\mathcal{X}_{t+1})-f^*)).
 \end{equation*}
 Thus
 \begin{align*}
     \mathcal{E}(t+1)-\mathcal{E}(t) & \leq \frac{C_0t+ 1 }{\gamma} (f(\mathcal{X}_{t+1})-f(\mathcal{X}_t))+\frac{C_0}{\gamma} (f(\mathcal{X}_{t+1})-f^*) \\ & + \frac{1 - C_0}{\gamma} (f(\mathcal{X}_t)-f(\mathcal{X}_{t+1}))- \frac{C_0}{\gamma} (f(\mathcal{X}_{t+1})-f^*) 
     \\&=\frac{C_0t+C_0}{\gamma} (f(\mathcal{X}_{t+1})-f(\mathcal{X}_t)) \leq 0.
 \end{align*}

Thus $\mathcal{E}(t) \leq \mathcal{E}(0)$ and the result follows.
 
\qed \end{proof}

\end{document}